\newtheorem{thm}{Theorem}[section]
\newtheorem{prop}[thm]{Proposition}
\newtheorem{lem}[thm]{Lemma}
\newtheorem{cor}[thm]{Corollary}
\theoremstyle{definition}
\newtheorem{defn}[thm]{Definition}
\newtheorem{rem}[thm]{Remark}
\newtheorem{exmp}[thm]{Example}
\newtheorem{ques}[thm]{Question}
\newcommand{\field}[1]{\mathbb{#1}}
\newcommand{\Z}{\field{Z}}
\newcommand{\R}{\field{R}}
\newcommand{\PP}{\field{P}}
\newcommand{\mc}{\mathcal}
\DeclareMathOperator{\Cay}{Cay}
\DeclareMathOperator{\Lk}{Lk}
\DeclareMathOperator{\Stab}{Stab}
\DeclareMathOperator{\proj}{\operatorname{proj}}
\DeclareMathOperator{\diam}{diam}
\newcommand{\showcomments}{yes}
\newsavebox{\commentbox}
\title{Largest hyperbolic actions of  3--manifold groups}
\author{Carolyn Abbott}
\address{Department of Mathematics\\
Brandeis University\\
415 South Street\\
Waltham, MA 02453}
\email{carolynabbott@brandeis.edu}
\author{Hoang Thanh Nguyen}
\address{Department of Mathematics\\
University of Danang-University of Science and Education\\
DaNang, Vietnam}
\email{nthoang.math@gmail.com}
\author{Alexander J. Rasmussen}
\address{Department of Mathematics\\
University of Utah\\
Salt Lake City, UT 84112\\
USA}
\email{rasmussen@math.utah.edu}
\date{}
\begin{document}

\date{\today}

\begin{abstract}
The set of equivalence classes of cobounded actions of a group $G$ on different hyperbolic metric spaces carries a natural partial order. Following Abbott--Balasubramanya--Osin, the group $G$ is \textit{$\mathcal{H}$--accessible} if the resulting poset has a largest element. In this paper, we prove that every non-geometric 3-manifold has a finite cover with $\mathcal{H}$--inaccessible fundamental group and give conditions under which the fundamental group of the original manifold is $\mc H$--inaccessible. 
We also prove that every Croke-Kleiner admissible group (a class of graphs of groups that generalizes fundamental groups of 3–dimensional
graph manifolds) has a finite index subgroup that is $\mathcal{H}$--inaccessible.
\end{abstract}

\subjclass[2010]{%
57M50, 
20F65,  
20F67} 


\maketitle

\section{Introduction}
A fixed group $G$  will admit many different
cobounded actions on different hyperbolic metric spaces. Abbott, Balasubramanya, and Osin in \cite{ABO19} show that the set of equivalence classes of cobounded hyperbolic actions of a group $G$ carries a natural partial order; see Section~\ref{sec:prelim}. The resulting poset is called the {\it poset of
hyperbolic structures} on $G$, denoted $\mathcal{H}(G)$. Roughly speaking, one action is larger than another if the smaller space
can be formed by equivariantly collapsing some subspaces of the larger. The motivation is that the larger an action is in this partial order, the more information about the geometry of the group it should provide.

The posets $\mathcal H(G)$ remain mysterious, especially for groups with features of non-positive curvature, which tend to have uncountable posets of hyperbolic structures \cite{ABO19}. First steps towards understanding these posets were made in \cite{Bal20}, \cite{AR19}, and \cite{AR19b}. While \cite{Bal20} and \cite{AR19b} give  complete descriptions of $\mathcal H(G)$ for the groups in question, it appears essentially impossible to do this, given current technology, for groups with strong features of non-positive curvature. Nonetheless, one aspect of the poset that can be understood in many instances is the (non-)existence of a \textit{largest} element of $\mc H(G)$, that is, an  element that is greater than or equal to every other element of the poset.  When a largest element exists, we say that the group $G$ is \textit{$\mc H$--accessible}; otherwise, the group is \textit{$\mc H$--inaccessible}. 
 The first and third authors show that  many groups with features of non-positive curvature are $\mc H$--inaccessible \cite{AR19b}. 
 In this paper, we follow this direction by extending these results to a  large class of 3-manifold groups.

We first consider fundamental groups of non-geometric 3--manifolds with empty or toroidal boundary.  
If $M$ is a compact, orientable, irreducible non-geometric 3-manifold, then there exists a non-empty minimal union $\mc T$ of disjoint essential tori in $M$ such that each connected component of $M\setminus\mc T$ is Seifert fibered or  hyperbolic. This is called the \emph{torus decomposition of $M$}, and the connected components of $M\setminus\mc T$ are called \emph{pieces}.  Our first result shows that if $M$ contains certain types of pieces, then $\pi_1(M)$ is $\mc H$--inaccessible.  To describe these pieces, we introduce the class of \emph{non-elementary} Seifert fibered manifolds, which are those whose base orbifolds are orientable and hyperbolic.  Non-elementary graph manifolds are those whose Seifert pieces are non-elementary, and a mixed manifold is non-elementary if all of its Seifert fibered pieces and maximal graph manifold components are non-elementary.  Any graph  manifold or Seifert fibered manifold is finitely covered by a non-elementary one.  See Section~\ref{sec:NPC} for a more in-depth discussion.

\begin{restatable}{thm}{NPCmfld}
\label{thm:NPCmld}
Let $M$ be a non-geometric 3-manifold with empty or toroidal boundary. If the torus decomposition of $M$ contains any of the following, then $\pi_1(M)$ is $\mc H$--inaccessible: 
    \begin{enumerate}[(1)]
        \item a hyperbolic piece which contains a boundary torus of $M$; 
        \item two hyperbolic pieces glued along a torus;
        \item an isolated non-elementary Seifert fibered piece; or
        \item a non-elementary maximal  graph manifold component.
    \end{enumerate}
\end{restatable}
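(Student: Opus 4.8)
The plan is to assume toward a contradiction that $\pi_1(M)$ has a largest hyperbolic action $[\pi_1(M)\curvearrowright S]$ and, in each of the four cases, to locate a subgroup $P\le\pi_1(M)$ whose own poset $\mathcal H(P)$ is too rigid for this to be possible. The two structural inputs, essentially from \cite{ABO19,AR19b}, are: (i) if $[S]$ is the largest element of $\mathcal H(\pi_1(M))$, then every element of $\pi_1(M)$ loxodromic for some cobounded hyperbolic action is loxodromic on $S$, and independent loxodromic axes persist into $S$; hence if a group with a general type hyperbolic action is $\mathcal H$-accessible, its largest action is of general type; and (ii) if $P$ is hyperbolically embedded in $\pi_1(M)$ (for instance a peripheral subgroup of a relatively hyperbolic structure), then any cobounded hyperbolic action of $P$ can be ``blown up'' to a cobounded hyperbolic action of $\pi_1(M)$ in which the $P$-orbit is coarsely that action. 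Together these give the reduction I want: if some hyperbolically embedded $P\le\pi_1(M)$ is $\mathcal H$-inaccessible, then so is $\pi_1(M)$, since a largest $[S]$ would dominate the blow-ups of all actions of $P$ simultaneously, forcing the $P$-action induced on a suitable subspace of $S$ to dominate every element of $\mathcal H(P)$. So in each case it suffices to exhibit such a $P$. (When $M$ is a closed graph manifold, case~(4) has no ambient relative hyperbolicity and one runs the corresponding argument directly on $\pi_1(M)$.)

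For cases~(1) and~(2) the subgroup is a $\mathbb Z^2$. Now $\mathbb Z^2$ is $\mathcal H$-inaccessible: its cobounded hyperbolic actions are exactly the lineal ones coming from nonzero homomorphisms to $\mathbb R$, and two of these are comparable only when they have the same kernel, so there is no largest one; concretely, were $[S]$ largest for $\pi_1(M)$, blowing up $\mathbb Z^2$ along each of its directions would make every nontrivial element of $\mathbb Z^2$ loxodromic on $S$, so $\mathbb Z^2$ would act lineally on $S$ in one fixed direction, which cannot dominate a lineal action of $\mathbb Z^2$ in a transverse direction. In case~(1) a boundary torus of $M$ contained in a hyperbolic piece provides a $\mathbb Z^2$ peripheral subgroup of $\pi_1(M)$ for its relative hyperbolicity with respect to the maximal graph-manifold pieces and the boundary tori. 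In case~(2) cutting $M$ along a JSJ torus $T$ that lies between two hyperbolic pieces presents $\pi_1(M)$ as an amalgam or HNN extension over $\pi_1(T)\cong\mathbb Z^2$; since $T$ abuts a hyperbolic piece on each side, $\pi_1(T)$ is malnormal in each vertex group, and a relative-hyperbolicity combination theorem makes $\pi_1(T)$ a peripheral subgroup of $\pi_1(M)$. Either way, $\pi_1(M)$ inherits $\mathcal H$-inaccessibility from $\mathbb Z^2$.

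For cases~(3) and~(4) the subgroup $P$ is the fundamental group of the isolated non-elementary Seifert piece, respectively the non-elementary maximal graph-manifold component; each is a peripheral subgroup of $\pi_1(M)$ for the relative hyperbolicity above (or equals $\pi_1(M)$ when $M$ is a graph manifold), so it remains to see that $P$ is $\mathcal H$-inaccessible. The obstruction is a Seifert fiber: choose a non-elementary Seifert piece $N\le P$ with fiber $\zeta$. Non-elementarity makes the base orbifold $O$ orientable and hyperbolic, so $\zeta$ is central in $\pi_1(N)$, the quotient $\pi_1(N)/\langle\zeta\rangle=\pi_1^{\mathrm{orb}}(O)$ is non-elementary and contains a free group $F_2$, and $\zeta$ has infinite order in $H_1(\pi_1(N))$. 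Hence $P$ admits a general type hyperbolic action on which $\pi_1(N)$ acts through $\pi_1^{\mathrm{orb}}(O)$ (the action on $\mathrm{Cay}(\pi_1^{\mathrm{orb}}(O))$ when $P=\pi_1(N)$; otherwise an action built from the graph-of-groups decomposition of $P$ by enlarging the $N$-vertices to such Cayley graphs), and $P$ also admits a cobounded hyperbolic action on which $\zeta$ is loxodromic (via a homomorphism $\pi_1(N)\to\mathbb Z$ detecting $\zeta$, or by realizing $\zeta$ inside an adjacent piece). If $P$ were $\mathcal H$-accessible its largest action $S_P$ would be general type with $\zeta$ loxodromic on it; but then $C_P(\zeta)\supseteq\pi_1(N)\supseteq F_2$ would have to fix the two-point limit set $\{\zeta^{\pm}\}\subseteq\partial S_P$, so a free basis of $F_2$ would be coaxial on $S_P$, hence coaxial on the dominated general type action --- contradicting the independence of those axes. (When $P=\pi_1(N)$ this is immediate: a central loxodromic makes the whole group fix $\{\zeta^{\pm}\}$, so $S_P$ is not general type.) Thus $P$, and therefore $\pi_1(M)$, is $\mathcal H$-inaccessible.

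I expect the genuinely hard part to be the geometric plumbing rather than these final contradictions: verifying in each case that the designated subgroup is actually hyperbolically embedded --- for case~(2) this needs the malnormality of a JSJ torus between two hyperbolic pieces together with a combination theorem for relatively hyperbolic groups, and for cases~(3)--(4) it needs the identification of the maximal graph-manifold pieces as peripheral subgroups of a mixed $3$-manifold group; setting up the ``blow-up'' of a hyperbolically embedded subgroup's action and the consequence that a largest action of the ambient group restricts to a largest action of the subgroup; and, in case~(4), producing the cobounded hyperbolic actions of a graph-manifold group by enlarging Seifert vertices to Cayley graphs of their base orbifold groups while keeping the $\mathbb Z^2$ edge groups controlled enough that the total space is hyperbolic. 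The non-elementarity hypotheses are used exactly where stated --- to make a Seifert fiber central and the base orbifold group non-elementary, so that the ``$F_2\times\mathbb Z$''-style obstruction exists; dropping them forces one to pass to a finite cover, which is why the unconditional statement is only about covers.
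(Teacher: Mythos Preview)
Your overall plan matches the paper's: locate a peripheral subgroup $P$ of the relatively hyperbolic structure on $\pi_1(M)$ (Lemma~\ref{lem:RHmfld}), exhibit an obstruction in $P$, and propagate it to $\pi_1(M)$ via the blow-up construction of \cite{AHO}. However, your stated reduction principle (``$P$ hyperbolically embedded and $\mathcal H$-inaccessible $\Rightarrow$ $G$ $\mathcal H$-inaccessible'') is too strong, and the justification has a gap: the ``$P$-action induced on a suitable subspace of $S$'' is not well-defined, since the $P$-orbit in a largest $S$ need not be a hyperbolic space, and there is no evident cobounded hyperbolic $P$-space through which all blow-ups factor. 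What \emph{does} transfer is the specific commuting-pair obstruction of Lemma~\ref{lem:key}: if $P$ admits actions $P\curvearrowright X,Y$ and commuting $a,b\in P$ with $a$ loxodromic and $b$ elliptic on $X$, and $b$ loxodromic on $Y$, then the blown-up $G$-actions inherit the same pattern, and Lemma~\ref{lem:key} applies to $G$. This is exactly the paper's Lemma~\ref{lem:RHG}, and it is what you are effectively using in each case---so the fix is simply to state the reduction in this weaker form. With that change, cases~(1)--(3) go through essentially as you describe. (A minor difference in case~(3): your homomorphism $\pi_1(N)\to\mathbb Z$ detecting the fiber exists because an isolated Seifert piece in a non-geometric manifold necessarily has boundary; the paper instead produces a quasimorphism via bounded cohomology of the hyperbolic base, which also covers closed Seifert fibered spaces---see Lemma~\ref{lem:centralextension}.)

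Case~(4) is, as you anticipate, the substantive part, and here the paper's argument differs from your sketch in two ways. First, the obstruction used is not your $F_2\times\mathbb Z$ configuration inside a single piece but rather the pair of fibers $z_\mu,z_\omega$ of two \emph{adjacent} Seifert pieces, which commute inside the $\mathbb Z^2$ edge group; one then builds two hyperbolic $P$-actions in which $z_\mu$ and $z_\omega$ swap loxodromic/elliptic roles. Second, those actions are not produced by ``enlarging vertex spaces to base-orbifold Cayley graphs'' (such a tree of spaces is not hyperbolic, precisely because the $\mathbb Z^2$ edge groups embed unboundedly and in incompatible directions on the two sides), but via the Bestvina--Bromberg--Fujiwara quasi-tree built from the collection of quasi-lines $\{L_v\}$, with projections coming from the relatively hyperbolic structure on each $\mathcal H_v$ (Section~\ref{CKA:finiteindex} and Proposition~\ref{prop:NPCgraph}). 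There is a further wrinkle you do not foresee: when the underlying graph of the graph manifold has a loop at $\mu$, the natural bipartition of the Bass--Serre tree is preserved only by an index-$2$ subgroup, and one must average the resulting quasimorphism over cosets (Lemma~\ref{lem:inproof}) and check the averaged quasimorphism still separates $z_\mu$ from $z_\omega$ in order to obtain the obstruction for $P$ itself.
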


We note a  straightforward corollary of Theorem~\ref{thm:NPCmld}. 
\begin{cor}
If $M$ is one of the following types of 3-manifolds, then $\pi_1(M)$ is $\mc H$--inaccessible:
    \begin{enumerate}[(a)]
        \item a finite non-trivial connected sum of finite volume cusped hyperbolic 3-manifolds; 
        \item a non-elementary mixed manifold.
    \end{enumerate}
\end{cor}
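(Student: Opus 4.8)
The plan is to derive (b) directly from Theorem~\ref{thm:NPCmld} and (a) from the analysis of relatively hyperbolic groups with virtually abelian peripheral subgroups, as in \cite{AR19b}. Begin with (b). Recall that a mixed manifold is a compact orientable irreducible $3$-manifold with empty or toroidal boundary whose torus decomposition contains at least one hyperbolic piece and at least one Seifert fibered piece; in particular it is non-geometric, so Theorem~\ref{thm:NPCmld} applies once we verify that one of its four hypotheses holds for a non-elementary mixed manifold $M$. Fix a hyperbolic piece $H$ of the torus decomposition (one exists because $M$ is mixed) and examine its boundary tori. If some boundary torus of $H$ is a boundary torus of $M$, then hypothesis~(1) holds. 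If not, then every boundary torus of $H$ is glued to another piece; if one of those is hyperbolic, hypothesis~(2) holds. In the remaining case $H$ is glued only to Seifert fibered pieces; choose one such piece $S$. If $S$ is not glued to any other Seifert fibered piece then $S$ is an isolated Seifert fibered piece, and since $M$ is non-elementary so is $S$, so hypothesis~(3) holds. Otherwise $S$ lies in a maximal graph manifold component $W$ containing at least two Seifert pieces, and since $M$ is non-elementary so is $W$, so hypothesis~(4) holds. Thus $\pi_1(M)$ is $\mathcal{H}$--inaccessible.

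For (a), write $M = M_1 \# \cdots \# M_k$ with $k \ge 2$ and each $M_i$ a finite-volume cusped hyperbolic $3$-manifold (the case $k=1$ being similar). Since $M$ is reducible, Theorem~\ref{thm:NPCmld} does not apply, so the argument is group-theoretic: $G := \pi_1(M) \cong \pi_1(M_1) * \cdots * \pi_1(M_k)$, and each free factor $\pi_1(M_i)$ is a non-elementary group hyperbolic relative to its cusp subgroups, each isomorphic to $\mathbb{Z}^2$. Hence $G$ is non-elementary and hyperbolic relative to the union $\mathcal{P}$ of all of these $\mathbb{Z}^2$ subgroups, with $|\mathcal{P}| \ge k \ge 2$. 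The key point is that $\mathbb{Z}^2$ is itself $\mathcal{H}$--inaccessible: it carries many pairwise incomparable lineal structures (one for each homomorphism $\mathbb{Z}^2 \to \mathbb{R}$, up to scaling and orientation) with no common upper bound. I would then invoke \cite{AR19b}, which shows that a non-elementary group hyperbolic relative to peripheral subgroups that are virtually abelian of rank at least $2$ (more generally, that are themselves $\mathcal{H}$--inaccessible) is $\mathcal{H}$--inaccessible: concretely, one refines the relatively hyperbolic structure on $G$ by, for a chosen peripheral $\mathbb{Z}^2$ and a homomorphism $\phi\colon \mathbb{Z}^2 \to \mathbb{R}$, coning off cosets of $\ker\phi$ rather than the full $\mathbb{Z}^2$-cosets; varying $\phi$ over pairwise non-proportional homomorphisms produces hyperbolic structures on $G$ with no common upper bound, since a common upper bound would force $\mathbb{Z}^2$ to act on a hyperbolic space with two distinct lineal directions. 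Therefore $\pi_1(M)$ is $\mathcal{H}$--inaccessible.

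The main obstacle is part (a): unlike (b), it cannot be reduced to Theorem~\ref{thm:NPCmld}, and the delicate point is that the restriction of a cobounded hyperbolic action of $G$ to a peripheral $\mathbb{Z}^2$ need not be cobounded, so one cannot simply ``pull back'' the incomparable lineal structures on $\mathbb{Z}^2$ to $G$. The honest work is to check that the refined cone-off constructions above really do give cobounded hyperbolic actions of $G$ dominating the relatively hyperbolic structure, and that no common upper bound exists; here I would cite the relevant statement of \cite{AR19b} rather than reproving it. A secondary, bookkeeping point in (b) is to confirm that the notions \emph{isolated Seifert fibered piece} and \emph{maximal graph manifold component} exhaust the possibilities for how a Seifert piece adjacent to a hyperbolic piece can sit in the decomposition, so that the case analysis is genuinely complete; this is routine given the definitions in Section~\ref{sec:NPC}.
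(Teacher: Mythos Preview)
Your argument for (b) is correct and is essentially the paper's own case analysis, just organized from the hyperbolic side rather than the Seifert side.

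For (a) you take a longer route than necessary and cite the wrong source. The paper handles (a) in one line by invoking Theorem~\ref{thm:NPCmld}(1): each cusp of each summand is a boundary torus of $M$ bounding a hyperbolic piece. You are right that $M$ is reducible, so the torus-decomposition hypothesis of Theorem~\ref{thm:NPCmld} does not literally apply; but the mechanism behind case~(1) does. Namely, $\pi_1(M)=\pi_1(M_1)*\cdots*\pi_1(M_k)$ is hyperbolic relative to the collection of cusp subgroups, each isomorphic to $\mathbb Z^2$, and $\mathbb Z^2$ satisfies the hypotheses of Lemma~\ref{lem:key}; then Lemma~\ref{lem:RHG} of this very paper (proved via the induced-action construction of \cite{AHO}) gives $\mathcal H$--inaccessibility immediately. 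The ``refined cone-off'' construction you sketch and the coboundedness concern you raise are exactly what Lemma~\ref{lem:RHG} already packages, so there is nothing further to check. Your citation of \cite{AR19b} is misplaced: that paper treats solvable Baumslag--Solitar groups and does not contain the statement about relatively hyperbolic groups with $\mathcal H$--inaccessible peripherals that you attribute to it. The tool you want is Lemma~\ref{lem:RHG}.
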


\begin{proof}
Part (a) follows from Theorem~\ref{thm:NPCmld}(1).  For part (b), observe that a mixed manifold must contain either: two hyperbolic pieces glued along a torus, an isolated non-elementary Seifert fibered piece, or a non-elementary graph manifold component with boundary. In these cases, we apply Theorem \ref{thm:NPCmld} (2), (3), or (4), respectively. 
\end{proof}

The first and third author give a straightforward criterion to show that a group is $\mc H$--inaccessible \cite[Lemma~1.4]{AR19}; see Lemma~\ref{lem:key}.  In this paper, we show in Lemma~\ref{lem:RHG} that if a peripheral subgroup of a relatively hyperbolic group satisfies this criterion, then the group itself also satisfies the criterion. There is a natural relatively hyperbolic structure on $\pi_1(M)$ in which the peripheral subgroups are fundamental groups of certain tori, Seifert fibered pieces, and graph manifolds.  The proof Theorem~\ref{thm:NPCmld} proceeds by showing that, in each case, there is a peripheral subgroup that is $\mc H$--inaccessible because it satisfies Lemma~\ref{lem:key}, and then applying Lemma~\ref{lem:RHG}. The key step is the following theorem.

\begin{thm} \label{thm:graphandSFmflds}
If $M$ is a non-elementary graph manifold or a non-elementary Seifert fibered manifold, then $\pi_1(M)$ is $\mc H$--inaccessible.
\end{thm}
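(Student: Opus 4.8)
My plan is to establish $\mc H$--inaccessibility through the criterion of Lemma~\ref{lem:key}: exhibit a \emph{general type} (non-elementary) cobounded hyperbolic action of $G=\pi_1(M)$ together with, for each Seifert fiber of $M$, a cobounded hyperbolic action of $G$ in which that fiber acts loxodromically, and then observe that no element of $\mc H(G)$ can dominate all of these. Indeed, if $[X]$ were a largest element it would dominate the general type action and hence be of general type, and it would dominate each of the fiber actions, so every Seifert fiber would be loxodromic in $X$. The key point is that the fibers are too central for this: a regular fiber is central in the fundamental group of its Seifert piece, and the regular fibers of two Seifert pieces glued along a torus both lie in the $\mathbb Z^2$ carried by that torus and hence commute. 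Propagating these commutation relations across $M$ forces $G$ to preserve a two--point subset of $\partial X$, so that $X$ is lineal --- contradicting that it is of general type (a lineal structure cannot dominate a general type structure).

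For a non-elementary Seifert fibered manifold $M$, write the central extension $1\to\langle z\rangle\to\pi_1(M)\to Q\to 1$, where $z$ is the class of the regular fiber and $Q$ is the fundamental group of the orientable hyperbolic base orbifold; since the base orbifold is hyperbolic, $Q$ is a non-elementary hyperbolic group. Pulling back the action of $Q$ on its Cayley graph along $\pi_1(M)\twoheadrightarrow Q$ gives a general type cobounded hyperbolic action of $\pi_1(M)$ in which $z$ is elliptic. On the other hand, when $M$ has nonempty boundary --- the case needed for the Seifert pieces in Theorem~\ref{thm:NPCmld} --- the fiber $z$ is nontorsion in $H_1(\pi_1(M))$, so there is an epimorphism $\pi_1(M)\to\mathbb Z$ sending $z$ to a generator, and the induced action on a line is a lineal cobounded hyperbolic action of $\pi_1(M)$ in which $z$ is loxodromic. (In the closed case one instead uses the $\mathbb H^2\times\mathbb R$ or $\widetilde{\SL_2(\R)}$ structure on $M$ to produce a cobounded action on a line in which $z$ is loxodromic.) If $[X]$ is largest, then $z$ is loxodromic in $X$; since $z$ is central, every element of $\pi_1(M)$ commutes with $z$ and therefore preserves the pair $\{z^{+},z^{-}\}\subseteq\partial X$ of fixed points of the loxodromic $z$, so $X$ is lineal, contradicting that it is of general type. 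Hence $\pi_1(M)$ is $\mc H$--inaccessible.

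For a non-elementary graph manifold $M$, use the graph of groups decomposition of $G=\pi_1(M)$ coming from the torus decomposition: the vertex groups $G_v$ are fundamental groups of Seifert pieces, each carrying a central regular fiber $z_v$, the edge groups are the $\mathbb Z^2$ subgroups carried by the decomposing tori, and the torus between adjacent pieces $v$ and $w$ contains both $z_v$ and $z_w$, so $z_v$ and $z_w$ commute. The action of $G$ on the associated Bass--Serre tree $T$ is cobounded and, since each edge group has infinite index in each adjacent vertex group, $T$ has infinite valence and the action is of general type. For the fiber actions I would construct, for each vertex $v$, a cobounded hyperbolic action $\mu_v$ of $G$ in which $z_v$ is loxodromic, by combining over $T$ the Bass--Serre structure with a hyperbolic space for $G_v$ on which $z_v$ acts loxodromically --- for instance the quasi-line obtained by coning off a horizontal surface subgroup of $G_v$ --- and invoking a Bestvina--Feighn type combination theorem to see that the resulting $G$--space is hyperbolic. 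Granting these, suppose $[X]$ is largest; then $X$ is of general type and every $z_v$ is loxodromic in $X$. For adjacent $v,w$ the commuting loxodromics $z_v,z_w$ share a common pair of endpoints in $\partial X$, so by connectedness of the decomposition graph all of the $z_v$ have the same pair of endpoints $\{p,q\}\subseteq\partial X$. Finally $z_v$ is central in $G_v$, so every element of $G_v$ preserves $\{p,q\}$; since the $G_v$ generate $G$, all of $G$ preserves $\{p,q\}$ and $X$ is lineal, contradicting general type. Hence $\pi_1(M)$ is $\mc H$--inaccessible.

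The main obstacle is the construction of the actions $\mu_v$ in the graph manifold case. The natural candidate is a tree of spaces over $T$ whose vertex spaces are quasi-lines in the fiber directions of the Seifert pieces, glued along the $\mathbb Z^2$ edge groups, inside which the fiber directions of adjacent pieces are transverse; one must choose these vertex and edge spaces with care --- especially in configurations where the regular fiber of one Seifert piece is isotopic to a boundary curve of the base surface of an adjacent piece, since then naive coning of horizontal subgroups also collapses that fiber --- so that every $z_v$ remains loxodromic while hyperbolicity and coboundedness still hold; the Bestvina--Feighn hypotheses may fail along some edges, so a direct hyperbolicity argument exploiting the tree structure of $T$ may be needed. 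The Seifert case with empty boundary, and analogously closed graph manifolds with homologically trivial fibers, likewise require producing the relevant line action by geometric rather than homological means. Everything else is formal, and one then deduces the statements of Theorem~\ref{thm:NPCmld} by combining Theorem~\ref{thm:graphandSFmflds} with Lemma~\ref{lem:RHG}.
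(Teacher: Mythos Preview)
Your overall strategy---forcing a putative largest element $[X]$ to be lineal by making every fiber loxodromic and then propagating fixed points through the commutation relations---is sound in outline and genuinely different from the paper's argument. The paper does not argue by contradiction in this global way; instead, for a single non-loop edge $[\mu,\omega]$ of the underlying graph, it produces two cobounded hyperbolic $G$--actions in which $z_\mu$ is loxodromic and $z_\omega$ elliptic (and vice versa), and applies Lemma~\ref{lem:key} directly. Your approach trades the delicate ``$z_\omega$ elliptic'' requirement for the weaker requirement that each $z_\mu$ be loxodromic in \emph{some} $G$--action, at the cost of needing such an action for every vertex and of the additional propagation step.

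The genuine gap is exactly the one you flag: you have not constructed the actions $\mu_v$. Your proposed tree-of-spaces approach via a Bestvina--Feighn combination is not how the paper proceeds, and the difficulties you anticipate (the flaring hypotheses can fail when the fiber of one piece becomes a boundary curve of the base of an adjacent piece) are real. The paper instead uses the Bestvina--Bromberg--Fujiwara projection complex machinery: one takes the $G$--orbit of the fiber quasi-line $L_v$, verifies the projection axioms for these quasi-lines (Proposition~\ref{prop:Fi}), and invokes \cite{BBF15} to obtain an unbounded quasi-tree on which $G$ acts with $z_\mu$ loxodromic. When $\Gamma$ has a loop at $\mu$ this only works for an index-two subgroup, and one must extend a quasimorphism to all of $G$ and control its values on $z_\mu,z_\omega$ (Lemma~\ref{lem:inproof}). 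This is the main technical content of Sections~\ref{CKA:finiteindex} and~\ref{sec:NPCgraph}, and your sketch does not supply an alternative.

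Two smaller points. First, for a closed Seifert piece with $\widetilde{\SL(2,\R)}$ geometry there is no homomorphism to $\R$ nontrivial on the fiber; the paper produces the required lineal action from a quasimorphism coming from bounded cohomology of the hyperbolic quotient (Lemma~\ref{lem:centralextension}), and your appeal to ``the geometric structure'' needs to be made precise in exactly this way. Second, in your propagation argument the vertex groups $G_v$ generate $G$ only when $\Gamma$ is a tree; in general you must also check that the stable letters preserve $\{p,q\}$. This is true, but you should run the propagation over the Bass--Serre tree $T$ rather than over $\Gamma$: each $Z_{\tilde v}$ for $\tilde v\in T^0$ is generated by a conjugate of some $z_\mu$ and hence loxodromic, adjacent centers commute inside the common $\Z^2$ edge group, so all $Z_{\tilde v}$ share $\{p,q\}$, and then $G$ preserves $\{p,q\}$ because it permutes the $Z_{\tilde v}$ by conjugation.
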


Theorem \ref{thm:graphandSFmflds} generalizes   \cite[Theorem~1.1]{AR19}, in which the first and third authors show that flip graph manifold  groups, a special class of  graph manifold groups, are  $\mathcal{H}$--inaccessible.

The class of non-elementary graph manifold groups belongs to a  larger class of graphs of groups called {\it Croke-Kleiner admissible groups}  that were introduced by Croke and Kleiner in \cite{CK02}. Roughly speaking, Croke-Kleiner admissible groups are modeled on the JSJ structure of  graph manifolds where the (fundamental groups of the) Seifert fibered pieces are  replaced by central extensions $G$ of general hyperbolic groups $H$:
\begin{equation*}\label{centralExtEQ}
1\to Z(G)=\mathbb Z\to G\to H\to 1.    \end{equation*}
In some sense, Croke-Kleiner admissible groups are the simplest interesting groups  constructed algebraically from a finite number of hyperbolic  groups.  The  $\mathcal{H}$--inaccessibility of non-elementary graph manifold groups  follows from the following  result for Croke-Kleiner admissible groups.
\begin{thm}
\label{thm1}
Every Croke-Kleiner admissible group has a  subgroup of  index at most $2$ which is  $\mathcal{H}$--inaccessible.
\end{thm}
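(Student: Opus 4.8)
The plan is to pass to an index‑at‑most‑$2$ subgroup whose underlying graph is bipartite, exhibit on it two incomparable cobounded hyperbolic actions in which complementary halves of the fiber subgroups act loxodromically, and then derive a contradiction from the assumption that a largest action exists (this last step being an instance of the criterion of Lemma~\ref{lem:key}).

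\textbf{Reducing to a bipartite graph.} Realize a Croke--Kleiner admissible group $G$ as the fundamental group of a graph of groups with connected underlying graph $\Gamma$ (having at least one edge), vertex groups fitting into central extensions $1\to Z_v\to G_v\to H_v\to 1$ with $H_v$ non-elementary hyperbolic, and edge groups isomorphic to $\mathbb Z^2$, such that for each edge $e=(u,v)$ the subgroups $Z_u$ and $Z_v$ are non-commensurable inside $G_e\cong\mathbb Z^2$. If $\Gamma$ is bipartite put $G'=G$; otherwise the bipartite double cover $\widehat\Gamma\to\Gamma$ is connected and corresponds, via the composite $G\to\pi_1(\Gamma)\to\mathbb Z/2$, to an index‑$2$ subgroup $G'\le G$ that is again admissible, now over the bipartite graph $\widehat\Gamma=A\sqcup B$ with the same vertex and edge groups. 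It then suffices to show $G'$ is $\mathcal H$--inaccessible. Let $T$ be the Bass--Serre tree of $G'$. Since each $\mathbb Z^2$ edge group has infinite index in every incident vertex group and, the vertex groups surjecting onto non-elementary hyperbolic groups, is never equal to one, the cocompact action $G'\curvearrowright T$ fixes no point, line, or end, hence is of general type.

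\textbf{Two incomparable actions.} For each color $\ast\in\{A,B\}$ I would build a cobounded hyperbolic $G'$-space $X_\ast$ from a graph of spaces over $T$: replace each vertex space over a vertex $v$ of color $\ast$ by a quasi-line on which $G_v$ acts with $Z_v$ loxodromic and $H_v$ bounded, replace each vertex space over a vertex of the opposite color by a Cayley graph of $H_v$, and take the edge spaces to be the quasi-lines carrying the peripheral subgroups of the $H_v$, which are undistorted in the $H_v$ and hence quasi-isometrically embedded in every incident vertex space. (The requisite $G_v$-action with $Z_v$ loxodromic exists whenever $[Z_v]\ne 0$ in $H_1(G_v;\mathbb R)$ --- automatic for graph manifold pieces --- since it then comes from a homomorphism $G_v\to\mathbb R$; in general one works harder, e.g.\ using that $Z_v$ is not central in $G'$ and extending a rotation quasimorphism of $G_v$ to a quasimorphism of $G'$ whose associated cobounded focal action has $Z_v$ loxodromic.) Hyperbolicity of $X_\ast$ should follow from a Bestvina--Feighn-type combination theorem for graphs of hyperbolic spaces, whose flaring hypothesis is exactly where the non-commensurability of $Z_u$ and $Z_v$ across each edge is used, generalizing the flip-graph-manifold case of \cite{AR19}; this construction and the verification that the combination theorem applies are the step I expect to be the main obstacle. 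By design $Z_v$ is loxodromic in $X_\ast$ precisely when $v$ has color $\ast$, so $X_A$ and $X_B$ are incomparable in $\mathcal H(G')$.

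\textbf{No largest action.} Suppose $G'$ had a largest hyperbolic action $X$. Since $X$ dominates both $X_A$ and $X_B$, and domination cannot turn an unbounded orbit into a bounded one, every $Z_v$ has unbounded orbit in $X$, hence acts loxodromically or parabolically; being central in $G_v$, each $Z_v$ then forces $G_v$ to stabilize its limit set $F_v\subseteq\partial X$, a nonempty set of at most two points. If $u$ and $v$ span an edge, then finite-index subgroups of $Z_u$ and $Z_v$ commute inside $G_e\cong\mathbb Z^2$ and both have unbounded orbit, so they have equal limit sets; hence $F_u=F_v$, and by connectedness of the graph there is a single nonempty set $F\subseteq\partial X$ with $|F|\le 2$ stabilized by every vertex group. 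Each stable letter $t_e$ conjugates a finite-index subgroup of $Z_u$ (with $u$ an endpoint of $e$) into the vertex group of the other endpoint, which stabilizes $F$, so $Z_u$ stabilizes $t_e^{-1}F$; as a loxodromic or parabolic isometry stabilizes only subsets of its own limit set $F_u=F$, we get $t_e^{-1}F=F$ and $t_e$ stabilizes $F$. Thus $G'$, generated by its vertex groups and the stable letters, stabilizes the finite set $F$, so the $G'$-action on $X$ is not of general type. But $X$ dominates $T$, on which $G'$ acts with general type, contradicting the monotonicity of the action type along the domination order \cite{ABO19}. Therefore $G'$ is $\mathcal H$--inaccessible and $[G:G']\le 2$.
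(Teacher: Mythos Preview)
Your outline has the right shape—pass to a bipartite index-$\le 2$ subgroup, build two hyperbolic actions in which the central elements $Z_\mu$ of the two colors swap the roles of loxodromic and elliptic, then conclude—but the two substantive steps are not actually proved. First, the construction of $X_A$ and $X_B$ is the entire technical content of the theorem, and you defer it: you assert that a Bestvina--Feighn combination theorem ``should'' apply and call this ``the main obstacle''. At a vertex of color $\ast$ your vertex space is a quasi-line onto which \emph{every} incident edge space maps by a quasi-isometry, so there is no flaring there at all; the flaring you need must come entirely from the opposite-color vertices, and verifying the hallways-flare condition in this degenerate alternating situation is not routine. The paper avoids this entirely by using the Bestvina--Bromberg--Fujiwara quasi-tree of metric spaces: for each color $i$ the collection $\mathbb L_i=\{L_v:v\in\mathcal V_i\}$ of fiber quasi-lines, with projections defined via the relatively hyperbolic structure on the quotients $H_u$, satisfies the projection axioms (this is the content of Proposition~\ref{prop:Fi}), and $\mathcal C_K(\mathbb L_i)$ is then a quasi-tree on which $G'$ acts with $Z_\mu$ loxodromic for $\mu\in\mathcal V_i$ and elliptic for $\mu\in\mathcal V_{3-i}$. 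You should also note that the existence of the $G_v$-action on a quasi-line with $Z_v$ loxodromic holds in full generality (not just when $[Z_v]\ne 0$ in $H_1$): it comes from a quasimorphism produced via Mineyev's bounded cohomology result, as in Lemma~\ref{lem:HRSS22}.

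Second, your endgame takes a detour that is both unnecessary and not clearly justified. Once you have $X_A$ and $X_B$ with $z_\mu,z_\omega$ commuting, $z_\mu$ loxodromic and $z_\omega$ elliptic on $X_A$, and $z_\omega$ loxodromic on $X_B$, Lemma~\ref{lem:key} immediately says no action dominates both, and you are done; this is exactly what the paper does. Your longer argument instead shows that a putative largest $X$ would have all of $G'$ stabilizing a set $F\subset\partial X$ with $|F|\le 2$, hence $X$ is not of general type, and then invokes ``monotonicity of the action type along the domination order'' from \cite{ABO19} to contradict $[X]\succcurlyeq[T]$. The results in \cite{ABO19} give this monotonicity for elliptic and lineal structures, but do not show that a quasi-parabolic structure cannot dominate a general-type one; your citation does not cover the case $|F|=1$. (The argument is salvageable: if $z_\mu,z_\omega$ were both loxodromic in $X$ with common axis, the coarsely Lipschitz map $X\to X_A$ would force $z_\omega$ to be loxodromic in $X_A$; and a parabolic in $X$ cannot become loxodromic in $X_A$ since coarsely Lipschitz maps do not increase growth rates. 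But that is just reproving Lemma~\ref{lem:key}.)
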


A non-geometric 3-manifold $M$ always has a double cover in which all Seifert fibered pieces are non-elementary, and hence passing to a further finite cover if necessary, we obtain a finite cover $M' \to M$ such that $M'$ is either a non-elementary graph manifold or a non-elementary mixed manifold.
Combining Theorems~\ref{thm:NPCmld} and \ref{thm:graphandSFmflds} yields the following corollary.
\begin{cor}
If $M$ is a non-geometric 3-manifold then $\pi_1(M)$ has a finite index subgroup $H$ such that every finite index subgroup $K \le H$ is $\mathcal{H}$--inaccessible.
\end{cor}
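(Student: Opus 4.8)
The plan is to deduce the statement from Theorems~\ref{thm:NPCmld} and~\ref{thm:graphandSFmflds} together with a stability property of the torus decomposition under finite covers. Assume $M$ is compact, orientable, and irreducible with empty or toroidal boundary, as throughout. As noted just before the statement, $M$ has a finite cover $q\colon M'\to M$ such that $M'$ is either a non-elementary graph manifold or a non-elementary mixed manifold; I would set $H=\pi_1(M')$, which is finite index in $\pi_1(M)$ via $q_*$. Every finite-index subgroup $K\le H$ is $p_*\pi_1(M'')$ for some finite cover $p\colon M''\to M'$ (and such an $M''$ is again compact, orientable, irreducible, and with empty or toroidal boundary, since $M'$ is aspherical), so it suffices to show that $\pi_1(M'')$ is $\mc H$--inaccessible for every such $M''$.

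The key step is to establish that the relevant structure is inherited by finite covers: the torus decomposition of $M''$ is the preimage $p^{-1}(\mathcal T')$ of the torus decomposition $\mathcal T'$ of $M'$, its pieces are the components of the preimages of the pieces of $M'$, a component of the preimage of a hyperbolic (resp. non-elementary Seifert fibered) piece is again a piece of that same type, and $p$ preserves adjacency of pieces and the property of containing a boundary torus of the ambient manifold. Granting this, in the graph-manifold case every piece of $M''$ is a non-elementary Seifert fibered piece, so $M''$ is a non-elementary graph manifold or a non-elementary Seifert fibered manifold, and $\pi_1(M'')$ is $\mc H$--inaccessible by Theorem~\ref{thm:graphandSFmflds}; in the mixed case $M''$ is again a non-elementary mixed manifold, hence (as in the proof of the corollary to Theorem~\ref{thm:NPCmld}) it contains two hyperbolic pieces glued along a torus, an isolated non-elementary Seifert fibered piece, or a non-elementary graph manifold component with boundary, so $\pi_1(M'')$ is $\mc H$--inaccessible by Theorem~\ref{thm:NPCmld}(2), (3), or~(4). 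Either way $K=\pi_1(M'')$ is $\mc H$--inaccessible.

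To prove the stability property I would argue as follows. Each component of $p^{-1}(\mathcal T')$ is $\pi_1$--injective in $M''$, since it finitely covers a $\pi_1$--injective torus, and it is neither compressible nor boundary-parallel, since such a feature would descend to $M'$; thus $p^{-1}(\mathcal T')$ is an essential collection of tori cutting $M''$ into finite covers of the pieces of $M'$. A component over a hyperbolic piece is a finite cover of a finite-volume cusped hyperbolic $3$--manifold, hence of the same type, and the preimage of a boundary torus of $M'$ is a union of boundary tori of $M''$. If $N$ is a Seifert fibered piece of $M'$ with orientable hyperbolic base orbifold $O$, the Seifert fibration of $N$ is unique up to isotopy, so any component $\tilde N$ over $N$ is fiber-preserving with base orbifold a finite orbifold cover $\tilde O$ of $O$; then $\tilde O$ is orientable with $\chi^{\mathrm{orb}}(\tilde O)=d\cdot\chi^{\mathrm{orb}}(O)<0$, so $\tilde N$ is a non-elementary Seifert fibered piece, in particular neither a solid torus nor $T^2\times I$, so the tori of $p^{-1}(\mathcal T')$ neither disappear nor become mutually parallel. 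Finally the fiber slope on a torus of $\mathcal T'$ pulls back to the fiber slope on its preimage, so adjacent Seifert pieces of $M''$ lying over $\mathcal T'$--separated pieces of $M'$ still have non-matching fibers and do not merge; hence $p^{-1}(\mathcal T')$ is the minimal torus decomposition of $M''$, and the assertions about adjacency and maximal graph manifold components follow since $p$ induces a surjection of JSJ graphs preserving the Seifert/hyperbolic dichotomy.

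The main obstacle is exactly this stability property: the uniqueness of the Seifert fibration with hyperbolic base orbifold, and the verification that $p^{-1}(\mathcal T')$ is already minimal (no inessential tori, no parallel tori, no merged Seifert pieces), so that it genuinely is the torus decomposition of $M''$ rather than a refinement that must be collapsed. These are standard facts about $3$--manifold topology and its interaction with finite covers; once they are in place, the corollary follows immediately from Theorems~\ref{thm:NPCmld} and~\ref{thm:graphandSFmflds}.
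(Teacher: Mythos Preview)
Your proposal is correct and follows the same route the paper indicates: pass to a non-elementary finite cover $M'$, set $H=\pi_1(M')$, and then observe that every further finite cover $M''$ is again non-elementary so that Theorems~\ref{thm:NPCmld} and~\ref{thm:graphandSFmflds} apply. The paper states the corollary with essentially no proof beyond the sentence preceding it, so what you have done is simply spell out the standard $3$--manifold fact (stability of the JSJ decomposition and of the non-elementary condition under finite covers) that the paper leaves implicit.
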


So far, we have only discussed non-geometric $3$--manifolds.  In some cases, we can also understand the $\mc H$--accessibility of (finite-index subgroups) of geometric $3$--manifold groups.

\begin{prop}
Every $3$--manifold with Nil or Sol geometry has a finite cover whose fundamental group is $\mc H$--inaccessible.  The fundamental group of  a closed hyperbolic $3$--manifold is $\mc H$--accessible, while the fundamental group of a finite-volume cusped hyperbolic $3$--manifold is $\mc H$--inaccessible.
\end{prop}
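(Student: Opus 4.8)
The statement has three parts, and I would treat Nil and Sol separately, giving four cases in all. The two hyperbolic cases are quickest. If $M$ is closed hyperbolic then $\pi_1(M)$ acts geometrically on $\mathbb{H}^3$, so it is a hyperbolic group, and I would invoke the fact that every hyperbolic group $G$ is $\mc H$--accessible: for any finite generating set $S$ and any cobounded hyperbolic $G$--space $X$, the orbit map $\Cay(G,S)\to X$ is $G$--equivariant and Lipschitz, so $[\Cay(G,S)]$ is the largest element of $\mc H(G)$ (see~\cite{ABO19}). If instead $M$ is finite-volume cusped hyperbolic, then $\pi_1(M)$ is hyperbolic relative to its cusp subgroups, each isomorphic to $\Z^2$ or to the Klein bottle group, and I claim each satisfies the criterion of Lemma~\ref{lem:key}. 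For $\Z^2$ this is the verification already used for boundary tori in Theorem~\ref{thm:NPCmld}(1): the two coordinate projections $\Z^2\to\Z$ give two lineal structures, each of which makes loxodromic exactly the elements with nonzero image, so they are incomparable (domination preserves loxodromic elements), and since $\Z^2$ is amenable it has no quasi-parabolic or general type structure, hence no largest element. The Klein bottle group $\langle a,b\mid aba^{-1}=b^{-1}\rangle$ is handled the same way, using one orientable lineal structure (through its free abelianization, with $a$ loxodromic and $b$ elliptic) and one non-orientable one (with $a$ acting on $\R$ by an orientation-reversing isometry, hence elliptically, and $b$ by a translation, hence loxodromically). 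Lemma~\ref{lem:RHG} then shows that $\pi_1(M)$ satisfies Lemma~\ref{lem:key}, so it is $\mc H$--inaccessible.

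For Nil geometry, I would pass to a finite cover $M'\to M$ so that $\pi_1(M')$ is torsion-free nilpotent and not virtually cyclic. Any such group has first Betti number at least $2$ --- a finitely generated infinite torsion-free nilpotent group with $b_1=1$ is infinite cyclic, as one reads off its rational Malcev Lie algebra --- so it carries two non-proportional homomorphisms to $\R$ and hence two incomparable orientable lineal hyperbolic structures. Being amenable, $\pi_1(M')$ admits no quasi-parabolic or general type action, and a parabolic structure cannot dominate a lineal one because domination preserves loxodromic elements. Therefore $\mc H(\pi_1(M'))$ has no largest element.

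For Sol geometry, I would pass to a finite cover $M'\to M$ with $\pi_1(M')\cong\Z^2\rtimes_A\Z$ for a hyperbolic matrix $A\in\SL_2(\Z)$; let $\lambda>1$ be an eigenvalue of $A$ and $E^\pm\subset\R^2$ the corresponding eigenlines. Projecting $\Z^2\hookrightarrow\R^2$ onto $E^+\cong\R$ and letting a generator $t$ of the $\Z$--factor act by multiplication by $\lambda$ realizes $\pi_1(M')$ as a cobounded group of isometries of $\mathbb{H}^2$ fixing a single boundary point $\xi^+$, in which $t$ is loxodromic with \emph{attracting} fixed point $\xi^+$; using $E^-$ (equivalently, the automorphism $A^{-1}$) in place of $E^+$ produces a second quasi-parabolic structure, fixing a point $\xi^-$, in which $\xi^-$ is instead the \emph{repelling} fixed point of $t$. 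I would then show these two structures are incomparable and admit no common upper bound: since $\pi_1(M')$ is amenable, every cobounded hyperbolic structure on it is elliptic, parabolic, lineal, or quasi-parabolic; an upper bound for either of ours contains loxodromic elements, so it is neither elliptic nor parabolic; a lineal structure cannot dominate a quasi-parabolic one (its character kernel would act boundedly on the quasi-parabolic space, which it does not); and a quasi-parabolic upper bound would send its unique fixed boundary point to both $\xi^+$ and $\xi^-$, forcing $t$ to attract and repel at the same point --- impossible. Hence $\mc H(\pi_1(M'))$ has no largest element; equivalently, $\pi_1(M')$ satisfies Lemma~\ref{lem:key}.

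The Sol case is where the real work lies: it requires the classification of cobounded hyperbolic actions of the amenable group $\Z^2\rtimes_A\Z$ --- in particular, the fact that it admits no general type action --- together with the nonexistence of a common upper bound for the two quasi-parabolic structures coming from the eigendirections of $A$. This is exactly the point at which $A$ being \emph{hyperbolic}, so that both $A$ and $A^{-1}$ have an expanding eigendirection, becomes essential: it is what forces $\mc H$--inaccessibility, and it distinguishes $\Z^2\rtimes_A\Z$ from $\mc H$--accessible solvable groups such as $\mathrm{BS}(1,n)$, whose defining automorphism expands in only one direction.
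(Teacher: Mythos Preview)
Your treatment of the hyperbolic and Nil cases matches the paper's: closed hyperbolic groups are $\mc H$--accessible because they are hyperbolic; cusped ones are handled by Lemma~\ref{lem:RHG} applied to a $\Z^2$ cusp (this is exactly Corollary~\ref{cor:hyp}); and for Nil the paper, like you, passes to a torus bundle with unipotent monodromy and uses that the abelianization has rank $\ge 2$ to produce incomparable lineal structures. For Sol the paper simply cites~\cite{AR19}, whereas you sketch the underlying argument via the two eigendirections of~$A$; the shape of your sketch is the right one.

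There is, however, a genuine error in your final sentence for Sol: the group $\Z^2\rtimes_A\Z$ with $A$ Anosov does \emph{not} satisfy the hypotheses of Lemma~\ref{lem:key}. Any two commuting elements either both lie in the normal $\Z^2$ (hence are never loxodromic in a quasi-parabolic or lineal action of this group, since every homomorphism to $\R$ kills $\Z^2$ as $\det(A-I)\neq 0$), or both have nonzero $\Z$--component (hence are simultaneously loxodromic in every quasi-parabolic structure). So your conclusion for Sol is correct but not ``equivalently'' an instance of Lemma~\ref{lem:key}; it genuinely requires the incomparability-of-quasi-parabolic-structures argument, which is why the paper defers to~\cite{AR19}. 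Relatedly, your ``send the fixed boundary point to both $\xi^\pm$'' step is informal: a coarsely Lipschitz map need not induce a boundary map, and the rigorous version (as in~\cite{AR19}) goes through Busemann pseudocharacters or confining subsets rather than a literal boundary correspondence.

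One small slip in the cusped case: in the Klein bottle group $\langle a,b\mid aba^{-1}=b^{-1}\rangle$ the elements $a$ and $b$ do not commute, so Lemma~\ref{lem:key} does not apply to that pair. Use $a^2$ and $b$ instead: they commute, $a^2$ is loxodromic and $b$ elliptic under the abelianization map, while $b$ is loxodromic and $a^2$ elliptic in the non-orientable lineal action. (For orientable $M$, which is the paper's standing assumption, all cusp groups are $\Z^2$ and this issue does not arise.)
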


\begin{proof}
If a 3--manifold $M$ has the geometry of Sol, then $M$ is a torus bundle over a $1$--dimensional orbifold (an interval with reflection boundary points or
a circle) and thus $M$
has a double cover that is a  torus bundle with Anosov monodromy. The $\mc H$--inaccessibility of the fundamental group of this bundle then follows from work of the first and third authors \cite{AR19}. 

 If the geometry of $M$ is Nil, $M$ is a Seifert fibered 3-manifold, and $M$ 
is finitely covered by a torus bundle $M'$ with unipotent monodromy, and the only possible hyperbolic actions are lineal and elliptic. On the other hand, the abelianization of $\pi_1(M')$ is virtually $\Z^2$, so this yields infinitely many homomorphisms $\Z^2 \to \R$ modulo scaling, and infinitely many inequivalent actions on $\R$ by translations. Since such lineal actions are incomparable by \cite[Theorem~2.3]{ABO19}, $\pi_1(M')$ is $\mathcal H$-inaccessible.

The fundamental group of a closed hyperbolic $3$--manifold is a hyperbolic group, and so is $\mc H$--accessible.  The result for a finite-volume cusped hyperbolic $3$--manifold is Corollary~\ref{cor:hyp}.
\end{proof}

In Section \ref{sec:Boundary}, we consider general finitely generated 3-manifold groups. This includes fundamental groups of reducible 3-manifolds, certain geometric 3-manifolds, and 3-manifolds with non-toroidal boundary. In Proposition \ref{prop:highergenus}, we characterize the  $H$--accessibility of many such 3-manifold groups. In particular, any finitely generated fundamental group of a hyperbolic 3-manifold without rank-1 cusps is $\mathcal H$--accessible.

Many basic questions about posets of hyperbolic structures are still open. Surprisingly, it is still unknown whether $\mathcal{H}$--inaccessibility of a finite index normal subgroup of $G$ passes to $\mathcal{H}$--inaccessibility of the ambient group $G$. In the setting of non-geometric 3-manifolds, the only cases in which we are unable to determine the $\mathcal H$--(in)accessibility of the fundamental group are a manifold all of whose Seifert fibered pieces are elementary  and a graph manifold whose underlying graph contains a single vertex.  We suspect that these manifolds are also
$\mathcal{H}$--inaccessible, but the techniques in this paper do not apply.
We thus ask the following question.
\begin{ques}
Let $M$ be a graph manifold with underlying graph containing only one vertex. Is $\pi_1(M)$ $\mathcal{H}$--inaccessible?
\end{ques}

\subsection{Acknowledgments}
Abbott was partially supported by NSF grants DMS-1803368 and DMS-2106906. Nguyen was partially supported by Project ICRTM04\_2021.07 of the International Centre for Research and Postgraduate Training in
Mathematics, VietNam. Rasmussen was partially supported by NSF grants DMS-1840190 and DMS-2202986.

\section{Preliminaries}
\label{sec:pre}

\subsection{$\mathcal{H}$--accessibility}
\label{sec:prelim}

 In this section, we review the partial order on cobounded group actions introduced in \cite{ABO19}. 
 Fix a group $G$. If $G$ acts coboundedly on two  metric spaces $X$ and $Y$, we say $G\curvearrowright X$ \textit{is dominated by} $G\curvearrowright Y$, written $G\curvearrowright X \preceq G\curvearrowright Y$, if there exists a coarsely $G$--equivariant coarsely Lipschitz map $Y\to X$.  The preorder $\preceq$ induces an equivalence relation $G\curvearrowright X\sim G\curvearrowright Y$  if and only if $G\curvearrowright X \preceq G\curvearrowright Y$ and $G\curvearrowright Y \preceq G\curvearrowright X$. It descends to a partial order $\preccurlyeq$ on the set of equivalence classes. We denote the equivalence class of an action by $[G\curvearrowright X]$.

\begin{defn}
Given a group $G$, the \emph{poset of hyperbolic structures on $G$} is defined to be \[\mathcal H(G)\vcentcolon = \{[G\curvearrowright X]\mid G \curvearrowright X \textrm{ is cobounded and } X \textrm{ is hyperbolic}\},\] equipped with the partial order $\preccurlyeq$.
\end{defn}

By \cite[Proposition~3.12]{ABO19}, this is equivalent to the original definition of $\mathcal H (G)$ in terms of generating sets. We say that an element of a poset is \emph{largest} when it is greater than or equal to every other element of the poset. Such an element is unique. 

\begin{defn}
A group $G$ is {\it $\mathcal{H}$--accessible} if the poset $\mathcal{H}(G)$ has a largest element. Otherwise, it is  {\it $\mathcal{H}$--inaccesible}.
\end{defn}

The following lemma gives a simple criterion to check if a group is  $\mathcal{H}$--inaccessible.

\begin{lem}[{\cite[Lemma~1.4]{AR19}}]
\label{lem:key}
Let $G$ be a group. Suppose that there are commuting  elements $a, b \in G$  and hyperbolic actions $G \curvearrowright X$ and $G \curvearrowright Y$ such that
\begin{enumerate}
    \item $a$ acts loxodromically and $b$ acts elliptically in the action $G \curvearrowright X$; and 
    \item $b$ acts loxodromically in the action $G \curvearrowright Y$.
\end{enumerate}
Then there does not exist a hyperbolic action $G\curvearrowright Z$ such that $G \curvearrowright X \preceq G \curvearrowright Z$ and $G \curvearrowright Y \preceq G \curvearrowright Z$.
\end{lem}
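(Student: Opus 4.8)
The plan is to argue by contradiction. Suppose there is a hyperbolic action $G \curvearrowright Z$ with $G \curvearrowright X \preceq G \curvearrowright Z$ and $G \curvearrowright Y \preceq G \curvearrowright Z$, and fix coarsely $G$--equivariant, coarsely Lipschitz maps $f \colon Z \to X$ and $h \colon Z \to Y$ witnessing these dominations. The idea is to compare how the commuting pair $a,b$ moves points in $Z$: the domination $G\curvearrowright Y \preceq G\curvearrowright Z$ will force $b$ to act loxodromically on $Z$, the domination $G\curvearrowright X \preceq G\curvearrowright Z$ will force $a$ to act loxodromically on $Z$, and then pushing the (coarsely coincident) quasi-axes of $a$ and $b$ in $Z$ forward to $X$ will contradict the fact that $a$ is loxodromic but $b$ is elliptic on $X$.

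In more detail, I would first show $a$ acts loxodromically on $Z$. Since $a$ acts loxodromically on $X$, an orbit map $n \mapsto a^n x$ is a quasi-isometric embedding of $\Z$ into $X$; using that $f$ is coarsely $G$--equivariant and coarsely Lipschitz, one pulls this lower bound back to obtain $d_Z(z, a^n z) \geq \kappa |n| - \kappa$ for some $\kappa > 0$ and all $n$, so $a$ has positive translation length on $Z$ and is therefore loxodromic there. The identical argument with $h$ in place of $f$, using that $b$ acts loxodromically on $Y$, shows $b$ acts loxodromically on $Z$. Now $a$ and $b$ are commuting loxodromic isometries of the hyperbolic space $Z$, so by the classification of isometries they have the same pair of fixed points in $\partial Z$; consequently the orbit quasi-geodesics $\gamma_a = \{a^n z_0\}_{n \in \Z}$ and $\gamma_b = \{b^n z_0\}_{n \in \Z}$ are bi-infinite quasi-geodesics with the same endpoints, hence lie at finite Hausdorff distance from one another in $Z$ by stability of quasi-geodesics in hyperbolic spaces.

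To finish, push both orbits forward by $f$. Since $f$ is coarsely Lipschitz, $f(\gamma_a)$ and $f(\gamma_b)$ remain at finite Hausdorff distance in $X$. By coarse equivariance, $f(\gamma_b)$ lies within bounded distance of the $b$--orbit of $f(z_0)$, which is bounded because $b$ acts elliptically on $X$; hence $f(\gamma_b)$, and therefore $f(\gamma_a)$, is bounded. But again by coarse equivariance $f(\gamma_a)$ lies within bounded distance of the $a$--orbit of $f(z_0)$, which is unbounded because $a$ acts loxodromically on $X$. This contradiction shows that no such $Z$ exists.

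I expect the only real thing to be careful about is that this must be carried out for arbitrary hyperbolic spaces, which need not be proper or geodesic: one should use the ``orbit maps are quasi-isometric embeddings'' characterization of loxodromic elements rather than anything like properness, keep track of the fact that coarse $G$--equivariance introduces only uniformly bounded error when transporting orbit segments, and invoke the versions of the Morse lemma and of ``two commuting loxodromics share their pair of fixed points'' that remain valid in this generality. These points are standard, but they are where the argument has to be phrased precisely.
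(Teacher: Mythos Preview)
The paper does not give its own proof of this lemma; it is simply quoted from \cite[Lemma~1.4]{AR19}. Your argument is correct and is essentially the expected one: loxodromicity passes upward along a domination (via the coarsely $G$--equivariant coarsely Lipschitz map $Z\to X$, resp.\ $Z\to Y$), so both $a$ and $b$ are loxodromic on $Z$; commuting loxodromics on a hyperbolic space share their pair of boundary fixed points, hence their orbit quasi-geodesics are at finite Hausdorff distance by the Morse lemma; pushing both forward to $X$ under $f$ then forces the $a$--orbit of $f(z_0)$ to be bounded, contradicting loxodromicity of $a$ on $X$.

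One minor remark on your closing caveat: in the setting of \cite{ABO19} every hyperbolic structure is represented by an action on a Cayley graph with respect to a (possibly infinite) generating set, so $Z$ may be taken to be geodesic, and the standard versions of the Morse lemma and of ``commuting loxodromics have the same fixed-point pair'' apply without further fuss. Your outline is otherwise complete.
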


We will typically apply this lemma to spaces $X$ and $Y$ that are quasi-isometric to lines. 
 These actions on quasi-lines will be constructed using quasimorphisms, which, in turn, will be constructed using hyperbolic actions of $G$ with very weak proper discontinuity properties (see \cite{BBF15}). 

\begin{defn}
Let $G\curvearrowright X$ be a hyperbolic action and $g\in G$ be loxodromic with fixed points $\{g^{\pm}\}\subset \partial X$. The element $g$ is \emph{WWPD} if the orbit of the pair $(g^+,g^-)$ is discrete in the space $\partial X \times \partial X \setminus \Delta$, where $\Delta=\{(x,x) \mid x\in \partial X\}$ is the diagonal subset. A WWPD element $g$ is \emph{WWPD${}^+$} if any $h\in G$  that stabilizes   $\{g^{\pm}\}$ as a \emph{set} also fixes $\{g^{\pm}\}$ pointwise.
\end{defn}

The following proposition follows from \cite[Corollary 3.2]{BBF16}.

\begin{prop}
\label{prop1}
Suppose that $G\curvearrowright X$ is a hyperbolic action with a WWPD${}^+$ element $g\in G$. There is a homogeneous quasimorphism $q \colon G \to \mathbb R$  such that $q(g) \neq 0$ and $q(h) =0$ for any element $h \in G$ that acts elliptically on $X$.
\end{prop}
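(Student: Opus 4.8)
The proposition is \cite[Corollary~3.2]{BBF16} transported into the vocabulary used here, so the plan is to recall the construction behind that result and check that the hypotheses line up. Starting from the WWPD${}^+$ element $g$, I would form the \emph{projection complex} and its associated \emph{quasi-tree of metric spaces} $\mathcal C_K(\mathbb Y)$ of Bestvina--Bromberg--Fujiwara \cite{BBF15}, built from the family $\mathbb Y = G\cdot\gamma$ of $G$--translates of a quasi-axis $\gamma$ of $g$ in $X$, together with the coarse nearest-point projections $\pi_Y\colon X\to Y$ for $Y\in\mathbb Y$. The WWPD hypothesis for $g$ is exactly what is needed to verify the projection axioms for $\{\pi_Y\}_{Y\in\mathbb Y}$: discreteness of the $G$--orbit of $(g^+,g^-)$ in $\partial X\times\partial X\setminus\Delta$ says, concretely, that for each constant $C$ the diameter of $\pi_\gamma(Y)$ exceeds $C$ for only finitely many $Y\in\mathbb Y$ modulo the action of the setwise stabilizer $E(g)$ of $\{g^{\pm}\}$. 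This produces a hyperbolic space $\mathcal C_K(\mathbb Y)$ (in fact a quasi-tree) on which $G$ acts by isometries with $g$ acting loxodromically, indeed as a WPD element.

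Next I would invoke the standard construction of counting (Brooks-type) quasimorphisms associated to a loxodromic WPD element of a hyperbolic action. Here the ``${}^+$'' in WWPD${}^+$ --- the condition that no element of $G$ interchanges $g^+$ and $g^-$ --- is exactly what guarantees that the counting quasimorphism associated to a long subpath of the $g$--axis in $\mathcal C_K(\mathbb Y)$ homogenizes to a quasimorphism $q\colon G\to\R$ with $q(g)\neq 0$, rather than one that vanishes on $g$ but not on $g^2$ or is only well-defined up to sign. This supplies everything except the vanishing statement, and if that is not already included in the conclusion of \cite[Corollary~3.2]{BBF16} it follows at once: $q$ is a counting quasimorphism for the action $G\curvearrowright\mathcal C_K(\mathbb Y)$, so $q(h)=0$ whenever $h$ has bounded orbits on $\mathcal C_K(\mathbb Y)$ (the number of copies of the defining subpath along a geodesic $[v,h^nv]$ is then bounded independently of $n$, and $q$ is homogeneous), and an element $h$ acting elliptically on $X$ does have bounded orbits on $\mathcal C_K(\mathbb Y)$. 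For the last point, $h$ cannot act loxodromically on any vertex space $Y\in\mathbb Y$, since a vertex space is a quasi-line in $X$ and translating along it would make $h$ loxodromic on $X$; and $h$ cannot act loxodromically on the underlying projection complex either, for then the orbit $\{h^n\gamma\}$ would escape to infinity with large successive $\pi$--projections, which, via the Behrstock inequality and the $G$--equivariance of nearest-point projection, is incompatible with $\{h^nx_0\}$ being bounded in $X$ for a basepoint $x_0$ --- all of the projections $\pi_{h^n\gamma}(x_0)$ then lie in a single bounded subset of $X$.

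The non-formal ingredients here --- checking the projection axioms from WWPD, the WPD property of $g$ on $\mathcal C_K(\mathbb Y)$, and the estimate showing that $X$--elliptic elements are $\mathcal C_K(\mathbb Y)$--elliptic --- are all contained in \cite{BBF15,BBF16}, so that writing the argument out amounts mostly to bookkeeping. The point I expect to need the most care, and the main obstacle to giving a self-contained proof, is matching our definition of WWPD${}^+$ with the exact hypotheses under which \cite{BBF16} runs this machinery: different sources phrase the ``no flip'' condition variously in terms of the setwise stabilizer of $\{g^{\pm}\}$ or in terms of orientations of the axis, but in every formulation it says precisely that no element of $G$ exchanges $g^+$ and $g^-$, which is our standing assumption.
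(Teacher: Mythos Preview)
The paper gives no proof of this proposition; it simply records that the statement follows from \cite[Corollary~3.2]{BBF16}. Your proposal correctly identifies this citation and sketches the mechanism behind it, so it is in agreement with the paper's approach. One small remark: the construction in \cite{BBF16} builds the Brooks-type counting quasimorphism directly on the original hyperbolic space $X$, using the WWPD condition to bound the defect and the ``no flip'' condition to ensure $q(g)\neq 0$; the vanishing on elliptics is then immediate since an elliptic element has bounded orbit in $X$ itself. Your detour through the quasi-tree $\mathcal C_K(\mathbb Y)$ is a valid alternative, but it adds a layer (and an extra argument that $X$--elliptics are $\mathcal C_K(\mathbb Y)$--elliptic) that \cite{BBF16} does not need.
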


Homogeneous quasimorphisms, in turn, give rise to actions on quasi-lines.

\begin{prop}[{\cite[Lemma~4.15]{ABO19}}]
\label{prop2}
Let $q \colon G \to \mathbb{R}$ be a non-zero homogeneous quasimorphism. Then there is an action of $G$ on a quasi-line $\mathcal{L}$ such that $q(g) \neq 0$ if and only if $g$ acts loxodromically on $\mathcal{L
}$.
\end{prop}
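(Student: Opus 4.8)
\emph{Proof idea.} The plan is to rigidify the \emph{quasi}-action of $G$ on $\R$ defined by $g\cdot x = x + q(g)$ into a genuine isometric action on a space quasi-isometric to a line. That formula is a quasi-action because $|q(gh)-q(g)-q(h)|\le D$, where $D:=D(q)$ is the defect of $q$, and $q(g^{-1})=-q(g)$ by homogeneity. The space $\mathcal L$ will be a Cayley graph of $G$ with respect to a level set of $q$, with $G$ acting by left multiplication, and the stable translation length of $g\in G$ will turn out to be comparable to $|q(g)|$.

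First I would construct $\mathcal L$. Since $q\neq 0$, fix $g_0\in G$ with $a:=q(g_0)>0$, set $C:=2(a+D)$ and $S:=\{g\in G : |q(g)|\le C\}$, and let $\mathcal L:=\Cay(G,S)$. The key point here is that $S$ generates $G$, so that $\mathcal L$ is connected. Indeed, given $g$ with $t:=q(g)>C$, put $k:=\lfloor C/a\rfloor$, so $g_0^k\in S$ and $ka\in[C-a,C]$; the defect bound gives $q(g_0^{-k}g)\in[t-ka-D,\;t-ka+D]$, an interval contained in $(-C,\;t-\delta]$ with $\delta:=a+D>0$. Peeling off such factors (and their inverses when the relevant value of $q$ is negative) decreases $|q|$ by at least $\delta$ at each step until it drops to at most $C$, at which point the remaining element lies in $S$. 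This simultaneously shows $g\in\langle S\rangle$ and yields the distance estimate $d_{\mathcal L}(1,g)\le |q(g)|/\delta + O(1)$, with the additive error independent of $g$.

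Next I would check that $\mathcal L$ is a quasi-line and read off the loxodromic dichotomy. If $g=s_1\cdots s_k$ with each $s_i\in S$, the quasimorphism inequality gives $|q(g)|\le\sum_i|q(s_i)|+(k-1)D\le k(C+D)$, hence $|q(g)|\le (C+D)\,d_{\mathcal L}(1,g)$; together with the previous paragraph,
\[
\frac{|q(g)|}{C+D}\;\le\; d_{\mathcal L}(1,g)\;\le\;\frac{|q(g)|}{\delta}+O(1)\qquad\text{for all }g\in G .
\]
Left-translating and using $\bigl|\,|q(g^{-1}h)|-|q(g)-q(h)|\,\bigr|\le D$ upgrades this to $d_{\mathcal L}(g,h)\asymp |q(g)-q(h)|$ up to multiplicative and additive constants; since $q(G)\supseteq a\Z$ is coarsely dense in $\R$ and the vertices are $1$--dense in $\mathcal L$, the orbit map followed by $q\colon\mathcal L\to\R$ is a quasi-isometry, so $\mathcal L$ is quasi-isometric to $\R$ and in particular hyperbolic. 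In a space quasi-isometric to a line an isometry is loxodromic if and only if its stable translation length $\tau(g)=\lim_n d_{\mathcal L}(1,g^n)/n$ is positive, and the displayed inequalities together with $q(g^n)=n\,q(g)$ give $|q(g)|/(C+D)\le \tau(g)\le |q(g)|/\delta$; hence $g$ acts loxodromically on $\mathcal L$ exactly when $q(g)\neq 0$.

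I expect the main obstacle to be the two technical points above: that the level set $S=q^{-1}([-C,C])$ is actually a generating set — this is what forces the introduction of the auxiliary element $g_0$ and a threshold $C$ strictly larger than $a+D$, so that multiplication by a suitable power of $g_0$ always makes definite progress — and the upper bound on $d_{\mathcal L}(1,g)$ with an additive constant that is uniform in $g$. It is this uniformity that makes $\mathcal L$ genuinely quasi-isometric to $\R$ (rather than merely the target of a coarsely Lipschitz $G$--map) and that makes the stable-translation-length computation, and hence the loxodromic characterization, go through.
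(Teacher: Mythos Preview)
Your proof is correct and follows the standard construction: the paper does not prove this proposition itself but simply cites \cite[Lemma~4.15]{ABO19}, and your argument---taking $S=q^{-1}([-C,C])$ for $C$ large relative to the defect and a nonzero value of $q$, showing $\Cay(G,S)$ is a quasi-line via the two-sided estimate $d_{\mathcal L}(1,g)\asymp |q(g)|$, and reading off loxodromicity from stable translation length---is exactly the approach used there.
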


\subsection{Projection axioms}
\label{sec:projaxioms}
We will use the Bestvina--Bromberg--Fujiwara projection complex machinery developed in \cite{BBF15} to obtain
actions on quasi-trees. In this section, we review this machinery.

\begin{defn}
\label{defn:Projectionaxioms}

Let $\mathbb{Y}$ be a collection of geodesic spaces equipped with projection maps  $$\{\pi_{Y}\colon  \mathbb Y\setminus \{Y\}\to Y\}_{Y\in \mathbb Y}.$$  Let $d_{Y}(X,Z) = \operatorname{diam} (\pi_{Y}(X) \cup \pi_{Y}(Z))$ for $X\ne Y\ne Z \in \mathbb{Y}$.  The pair $(\mathbb{Y}, \{\pi_{Y}\}_{Y\in\mathbb Y})$ satisfies the {\it projection axioms} for a {\it projection constant} $\xi \ge 0$ if
 \begin{enumerate}
     \item 
     \label{axiom1}  $\operatorname{diam} (\pi_{Y}(X)) \le \xi$ whenever $X \neq Y$;
     \item 
     \label{axiom2} if $X,Y,Z$ are distinct and $d_{Y}(X, Z) > \xi$, then $d_{X}(Y,Z) \le \xi$; and
     \item 
     \label{axiom3} for $X \neq Z$, the set $\{ Y \in \mathbb{Y} \,\mid \, d_{Y}(X, Z) > \xi \}$ is finite.
\end{enumerate}
\end{defn}

For a fixed $K>0$ and a pair $(\mathbb{Y}, \pi_{Y})$ satisfying the projection axioms for some constant $\xi$,  Bestvina, Bromberg, and Fujiwara construct a {\it quasi-tree of spaces} $\mathcal C_K(\mathbb Y) $ in \cite{BBF15}.  
  If $\mathbb Y$ admits an action of the group $G$ so that $\pi_{gY} (gX)=g\pi_Y(X)$ for any $g\in G$ and $X,Y\in \mathbb Y$, then $G$ acts by isometries on $\mathcal C_K(\mathbb Y)$; see Section~4.4 in \cite{BBF15}.
  Moreover, they show that if $K>4\xi$ and $\mathbb Y$ is a collection of metric spaces uniformly quasi-isometric to $\mathbb R$, then $\mathcal C_K(\mathbb Y)$ is an unbounded quasi-tree \cite[Theorem~4.14]{BBF15}.

The following is a useful example to keep in mind throughout the paper and is discussed in detail in the introduction of \cite{BBF15}.

\begin{exmp}\label{ex:linesinhypplane}
Let $G$ be a discrete group of isometries of $\mathbb H^2$ and $g_1,\dots, g_k\in G$ a finite collection of loxodromic elements with axes $\gamma_1,\dots, \gamma_k$, respectively.  Let $\mathbb Y$ be the set of all $G$--translates of $\gamma_1,\dots, \gamma_k$, and given $Y\in \mathbb Y$, define $\pi_Y$ to be closest point projection in $\mathbb H^2$.  Since all translates of each $\gamma_i$ are convex, this is a well-defined $1$--Lipschitz map, and it follows from hyperbolicity that the projection of one translate of an axis onto another has uniformly bounded diameter.  In \cite{BBF15} it is verified that the pair $(\mathbb Y,\pi_Y)$ satisfies the projection axioms for some constant $\xi$.
\end{exmp}

Given a pair $(\mathbb Y,\{\pi_Y\}_{Y\in\mathbb Y})$ satisfying the projection axioms and three domains $X,Y,Z\in \mathbb Y$, there is a notion of distance between a point of $X$ and a point of $Z$ from the point of view of $Y$, which we now describe. Let $x\in X$ and $z\in Z$.   If $X,Y,Z$ are all distinct,  then define 
$d_Y(x,z)\vcentcolon=d_Y(X,Z)$.  If $Y=X$ and $Y\ne Z$, then define $d_Y(x,z)\vcentcolon=\operatorname{diam}(\{x\}\cup\pi_Y(z))$, where the diameter is measured in $Y$.  Finally, if $X=Y=Z$, then let $d_Y(x,z)$ be the distance in $Y$ between $x$ and $z$.  
The spaces  $Y\in \mathbb Y$ naturally embed into $\mathcal C_K(\mathbb Y)$, so the distance $d_{\mathcal C_K(\mathbb Y)}(x,z)$ is also defined.
 
We have the following upper bound on distance in $\mathcal C_K(\mathbb Y)$. Set $[t]_K=t$ if $t\ge K$ and $[t]_K=0$ if $t<K$.

\begin{prop}[{\cite[Lemma~4.4]{BBF15}}]
 \label{prop:BBFDistanceProp}
 Let $(\mathbb Y, \pi_{Y\in \mathbb Y})$ satisfy the projection axioms with constant $\xi$. For any $K$ sufficiently large,
$$
d_{\mathcal C_K(\mathbb Y)}(x, z) \leq 6K + 4 \sum_{Y \in \mathbb Y} [d_Y(x,z)]_K.
$$ 
\end{prop}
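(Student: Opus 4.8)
The plan is to exhibit an explicit path in $\mathcal{C}_K(\mathbb{Y})$ from $x$ to $z$ and to bound its length term by term. Recall that $\mathcal{C}_K(\mathbb{Y})$ is assembled from the disjoint union $\bigsqcup_{Y\in\mathbb{Y}}Y$, each $Y$ retaining its own geodesic metric, by attaching an edge of length one between a point of $\pi_Y(Z)$ and a point of $\pi_Z(Y)$ whenever $Y$ and $Z$ are adjacent in the associated projection complex $\mathcal{P}_K(\mathbb{Y})$ --- the graph on the vertex set $\mathbb{Y}$ in which $Y\ne Z$ are joined by an edge exactly when $d_W(Y,Z)\le K$ for every $W\in\mathbb{Y}\setminus\{Y,Z\}$. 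In particular a path of $Y$ is also a path of $\mathcal{C}_K(\mathbb{Y})$ of the same length. If $X=Z$ there is nothing to prove: a geodesic of $X$ from $x$ to $z$ gives $d_{\mathcal{C}_K(\mathbb{Y})}(x,z)\le d_X(x,z)$, and $d_X(x,z)\le 6K+4[d_X(x,z)]_K$ holds whether $d_X(x,z)<K$ (left side $<6K$) or $d_X(x,z)\ge K$ (right side $\ge 4d_X(x,z)$). So I assume $X\ne Z$.

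The one substantial ingredient I would quote from \cite{BBF15} is the order structure associated to the pair $(X,Z)$. By projection axiom~(3) the set $\mathcal{L}(X,Z)=\{W\in\mathbb{Y}\mid d_W(X,Z)>K\}$ is finite, and, using axiom~(2), it can be arranged --- together with $X$ and $Z$ --- into a chain $X=Y_0,Y_1,\dots,Y_n,Y_{n+1}=Z$ in which consecutive terms are $\mathcal{P}_K$--adjacent and along which the projections are coarsely constant: for each $i$, all projections $\pi_{Y_i}(Y_j)$ with $j<i$ lie within distance $\xi$ of $\pi_{Y_i}(X)$, and all those with $j>i$ within distance $\xi$ of $\pi_{Y_i}(Z)$. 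Granting this, I build the path by concatenation: inside $Y_0=X$, a geodesic from $x$ to a point of $\pi_X(Y_1)$; the attaching edge from there to a point of $\pi_{Y_1}(Y_0)$; inside $Y_1$, a geodesic to a point of $\pi_{Y_1}(Y_2)$; and so on, ending with the attaching edge into $Y_{n+1}=Z$ followed by a geodesic of $Z$ from a point of $\pi_Z(Y_n)$ to $z$.

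For the length estimate, the segment inside an interior $Y_i$ (with $1\le i\le n$) has length at most $\operatorname{diam}\bigl(\pi_{Y_i}(Y_{i-1})\cup\pi_{Y_i}(Y_{i+1})\bigr)\le d_{Y_i}(X,Z)+2\xi$ by coarse constancy, and here $d_{Y_i}(X,Z)=d_{Y_i}(x,z)>K$ because $Y_i\in\mathcal{L}(X,Z)$; together with the length-one edge that follows it, this contribution is at most $4\,d_{Y_i}(x,z)=4[d_{Y_i}(x,z)]_K$ provided $K$ is large compared with $\xi$. The segment inside $Y_0=X$ has length at most $\operatorname{diam}(\{x\}\cup\pi_X(Z))+\xi=d_X(x,z)+\xi$ --- invoking the definition of $d_X(x,z)$ in the case $X\ne Z$ --- which is at most $4[d_X(x,z)]_K$ when $d_X(x,z)\ge K$ and at most $K+\xi$ otherwise; the segment inside $Y_{n+1}=Z$ is estimated the same way. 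Summing over all the pieces, and absorbing the single remaining attaching edge together with the at most two short endpoint segments (each of length less than $K+\xi$) into the additive term, the total length is at most $6K+4\sum_{Y\in\mathbb{Y}}[d_Y(x,z)]_K$ once $K$ is taken sufficiently large. This bounds $d_{\mathcal{C}_K(\mathbb{Y})}(x,z)$ as claimed.

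I expect the genuine obstacle to be precisely the ingredient I quoted: extracting from the three projection axioms the linear order on $\mathcal{L}(X,Z)\cup\{X,Z\}$, the $\mathcal{P}_K$--adjacency of its consecutive members, and the coarse constancy of projections along the chain. This is the heart of the Bestvina--Bromberg--Fujiwara theory --- axiom~(2) encodes the betweenness relation underlying the order and axiom~(3) makes the chain finite --- and once it is available, the construction of the path and the bookkeeping above are routine.
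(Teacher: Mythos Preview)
The paper does not prove this proposition at all: it is simply quoted as \cite[Lemma~4.4]{BBF15}, and the only remark the paper adds is that the inequality in \cite{BBF15} is stated for a modified distance $d_Y$ satisfying $d_Y(x,z)\le d_Y^\pi(x,z)$, so the upper bound transfers immediately to the distance used here. There is therefore no ``paper's own proof'' to compare your argument against.

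Your sketch is a reasonable outline of how the result is actually established in \cite{BBF15}: one builds the standard path in $\mathcal C_K(\mathbb Y)$ through the linearly ordered chain $X=Y_0,Y_1,\dots,Y_{n+1}=Z$ of domains with large projection, and bounds its length piece by piece. You are right that the substantive content --- the existence of the total order on $\{X,Z\}\cup\{W:d_W(X,Z)>K\}$, the $\mathcal P_K$--adjacency of consecutive terms, and the coarse constancy of projections along the chain --- is exactly the core of the Bestvina--Bromberg--Fujiwara machinery (their \S3), and once granted the bookkeeping is routine. One small inaccuracy: in the construction of $\mathcal C_K(\mathbb Y)$ the attaching edges are not of length one but are rescaled (in \cite{BBF15} each point of $\pi_Y(Z)$ is joined to each point of $\pi_Z(Y)$ by an edge of length~$K$), which is why the additive constant comes out as a multiple of $K$ rather than a bounded number; your estimate still goes through, but the ``length-one edge'' description should be corrected if you want the constants to match.
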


This distance formula was originally stated with a modified distance function.  In \cite{BBF15} the distance defined above was denoted $d_Y^\pi$, and the modified distance was denoted $d_Y$. 
  However, since $d_Y(x,z)\leq d_Y^\pi(x,z)$ for all point $x,z$, the inequality holds with the distance $d_Y^\pi$, as well.  As we will not need the modified distance function in this paper, we use the simpler notation $d_Y$ for $d_Y^\pi$ and choose to state the proposition with this distance.

\subsection{Croke-Kleiner admissible groups}
In this section, we  review Croke-Kleiner admissible groups \cite{CK02} and the associated Bass-Serre space, a notion defined in \cite{HRSS22}.

\begin{defn}
A \textit{graph of groups} $\mathcal{G} = (\Gamma, \{G_{\mu}\}, \{G_{\alpha} \}, \{\tau_{\alpha} \})$ is a connected graph $\Gamma$ together with
a group ${G}_{\sigma}$ for each $\sigma \in  V(\Gamma) \cup E(\Gamma)$ (here $V(\Gamma)$ and $E(\Gamma)$ denote vertices and edges), and an injective homomorphism $\tau_{\alpha} \colon {G}_{\alpha} \to {G}_{\mu}$ for each oriented edge $\alpha$, where $\mu$ denotes the terminal vertex of $\alpha$.
\end{defn}

\begin{defn}
\label{defn:admissible}
A graph of groups $\mathcal{G} = (\Gamma, \{G_{\mu}\}, \{G_{\alpha} \}, \{\tau_{\alpha} \})$  is called {\it {admissible}}  if the following hold.
\begin{enumerate}
    \item $\mathcal{G}$ is a finite graph with at least one edge.
    \item Each vertex group ${ G}_{\mu}$ has center $Z({ G}_{\mu}) \cong \Z$, ${ H}_{\mu} \vcentcolon= { G}_{\mu} / Z({ G}_{\mu})$ is a non-elementary hyperbolic group, and every edge group ${ G}_{\alpha}$ is isomorphic to $\Z^2$.
    \item Let $\alpha_1$ and $\alpha_2$ be distinct edges oriented towards a vertex $\mu$, and for $i = 1,2$ let $K_i \subset { G}_{\mu}$ be the image of the edge homomorphism ${G}_{\alpha_i} \to {G}_{\mu}$. Then for every $g \in { G}_{\mu}$, $gK_{1}g^{-1}$ is not commensurable with $K_2$, and for every $g \in  G_{\mu} \setminus K_i$, $gK_ig^{-1}$ is not commensurable with $K_i$. 
    \item For every edge group ${ G}_{\alpha}$ with $\alpha = [\alpha^-, \alpha^+]$ (oriented from $\alpha^-$ to $\alpha^+$), the subgroup of $G_{\alpha}$ generated by $\tau_{\alpha}^{-1}(Z({ G}_{\alpha^+}))$ and $\tau_{\overline{\alpha}}^{-1}(Z({ G}_{\alpha^-}))$ has finite index in ${ G}_{\alpha}$.
\end{enumerate}
A group $G$ is \emph{admissible} if it is the fundamental group of an admissible graph of groups.  Such groups are often called \textit{Croke-Kleiner admissible groups}.
\end{defn}

\begin{lem}[{\cite[Lemma 4.2]{HRSS22}}]
\label{lem:HRSS22}
    Let $\mathcal{G} = (\Gamma, \{G_{\mu}\}, \{G_{\alpha} \}, \{\tau_{\alpha} \})$ be  a Croke-Kleiner admissible group. For each edge $\alpha=[\alpha^-,\alpha^+]$ of $\mathcal{G}$, denote
    \[
    C_{\alpha} = \tau_{\alpha} (\tau^{-1}_{\bar{\alpha}}(Z_{\alpha^{-}})),
    \] which is a subgroup of $G_{\alpha^+}$. Each vertex group $G_{\mu}$ has an infinite generating set $S_{\mu}$ so that the following holds.
    \begin{enumerate}
        \item $\Cay(G_{\mu}, S_{\mu})$ is quasi-isometric to a line.
        \item The inclusion map $Z_{\mu} \to \Cay(G_{\mu}, S_{\mu})$ is a $Z_{\mu}$--equivariant quasi-isometry.
        \item For each edge $\alpha$ with $\alpha^{+}= \mu$ we have that $C_{\alpha}$ is uniformly bounded in $\Cay(G_{\mu}, S_{\mu})$.
    \end{enumerate}
\end{lem}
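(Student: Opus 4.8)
The plan is, for each vertex $\mu$ of $\mathcal{G}$, to first produce a homogeneous quasimorphism $q \colon G_\mu \to \R$ that is non-zero on the center $Z_\mu$ but vanishes on every $C_\alpha$ with $\alpha^{+} = \mu$, and then to take $S_\mu$ to be a suitable sublevel set of $q$. Write $1 \to Z_\mu \to G_\mu \xrightarrow{p} H_\mu \to 1$ for the defining central extension and let $z$ generate $Z_\mu \cong \Z$. Given $q$ as above, rescale so that $q(z) = 1$ and set $S_\mu = \{\, g \in G_\mu : |q(g)| \le R \,\}$ for $R$ large enough that $S_\mu$ contains a finite generating set of $G_\mu$; this is possible since $q$ is bounded on finite sets yet unbounded ($q(z^n) = n$), and the resulting $S_\mu$ is symmetric and infinite (it contains $C_\alpha \cong \Z$). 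A routine estimate --- the defect inequality for $q$ giving the lower bound, and $g = (g z^{-\lfloor q(g)\rfloor}) z^{\lfloor q(g)\rfloor}$ together with $g z^{-\lfloor q(g)\rfloor} \in S_\mu$ (using centrality of $z$) giving the upper bound --- yields $|g|_{S_\mu} \asymp |q(g)| + 1$. Consequently the orbit map $Z_\mu = \langle z \rangle \to \Cay(G_\mu, S_\mu)$ is a $Z_\mu$--equivariant quasi-isometric embedding with $1$--dense image, hence a quasi-isometry, so $\Cay(G_\mu, S_\mu)$ is quasi-isometric to $\Z$: this gives (1) and (2). And $C_\alpha \subseteq S_\mu \cup \{1\}$ has diameter at most $1$, giving (3). (Equivalently, one could feed $q$ into Propositions~\ref{prop1} and~\ref{prop2} and extract $S_\mu$ from the resulting cobounded quasi-line action using \cite[Proposition~3.12]{ABO19}.)

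It remains to construct $q$. Since $H_\mu$ is hyperbolic, the Euler class of the central extension is bounded --- e.g.\ because the comparison map $H^2_b(H_\mu; \R) \to H^2(H_\mu; \R)$ is surjective --- so the Euler cocycle arising from a set-theoretic section differs from a bounded cocycle by a coboundary $\delta u$; subtracting $u \circ p$ from the ``$z$--coordinate'' function gives a quasimorphism $G_\mu \to \R$ of finite defect with value $1$ on $z$, and homogenising produces a homogeneous quasimorphism $q_0 \colon G_\mu \to \R$ with $q_0(z) = 1$. For each edge $\alpha$ with $\alpha^{+} = \mu$, let $c_\alpha^{\ast}$ generate $C_\alpha$ and put $c_\alpha = p(c_\alpha^{\ast}) \in H_\mu$. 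Using the last two conditions of Definition~\ref{defn:admissible}, one checks that $C_\alpha \cap Z_\mu = 1$ --- so $c_\alpha$ has infinite order and is thus loxodromic in the non-elementary hyperbolic group $H_\mu$ --- and that, after passing to the quotient by $Z_\mu$, the family $\{c_\alpha : \alpha^{+} = \mu\}$ is \emph{independent}: no $\langle c_\alpha \rangle$ is commensurable to a conjugate of $\langle c_{\alpha'}\rangle$ for $\alpha \neq \alpha'$, and each $\langle c_\alpha \rangle$ coincides with its own commensurator up to finite index and is not inverted therein. For such an independent family the Bestvina--Bromberg--Fujiwara machinery produces homogeneous quasimorphisms of $H_\mu$ with arbitrary prescribed values on the $c_\alpha$: for each fixed $\alpha$, build via \cite{BBF15} an action of $H_\mu$ on a quasi-tree (from the $H_\mu$--translates of an axis of $c_\alpha$) for which $c_\alpha$ is WWPD${}^+$ and the other $c_{\alpha'}$ act elliptically, apply Proposition~\ref{prop1} (via \cite{BBF16}) to get $\bar q_\alpha \colon H_\mu \to \R$ with $\bar q_\alpha(c_\alpha) \neq 0$ and $\bar q_\alpha|_{\langle c_{\alpha'}\rangle} \equiv 0$, and form $\bar q = \sum_\alpha \bigl(-q_0(c_\alpha^{\ast})/\bar q_\alpha(c_\alpha)\bigr) \bar q_\alpha$. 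Then $q = q_0 + \bar q \circ p$ is a homogeneous quasimorphism on $G_\mu$ with $q(z) = 1$ and $q(c_\alpha^{\ast}) = q_0(c_\alpha^{\ast}) + \bar q(c_\alpha) = 0$ for every $\alpha$, as required.

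The first-paragraph estimate and the construction of $q_0$ are routine; I expect the main difficulty to lie in the middle of the second paragraph --- turning the non-commensurability hypotheses of Definition~\ref{defn:admissible} into genuine independence of the $c_\alpha$ in $H_\mu$, together with the WWPD${}^+$ property needed to apply Proposition~\ref{prop1}. In particular one must rule out that some $c_\alpha$ is conjugate in $H_\mu$ to its own inverse: were it so, every homogeneous quasimorphism of $H_\mu$ would vanish on $c_\alpha$, and the correction $\bar q$ would exist only when $q_0(c_\alpha^{\ast}) = 0$. This is exactly the role of the ``self-malnormality up to commensurability'' in the third condition of Definition~\ref{defn:admissible} (it confines the commensurator of $\langle c_\alpha\rangle$ to an abelian subgroup of $H_\mu$, which therefore cannot invert $c_\alpha$). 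A further minor point: securing the WWPD${}^+$ condition may require replacing each $c_\alpha$ by a proper power, which is harmless since a homogeneous quasimorphism is determined on a cyclic subgroup by its value on any non-trivial element.
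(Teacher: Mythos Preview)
The paper does not supply its own proof of this lemma: it is quoted verbatim from \cite[Lemma~4.2]{HRSS22} and is immediately followed by Remark~\ref{rem:loxellipaction} with no intervening argument. So there is no in-paper proof to compare against. That said, the strategy you outline is exactly the one the surrounding text points to: the proof of Lemma~\ref{lem:centralextension} invokes \cite[Lemma~4.1]{HRSS22} for the quasimorphism $q_0$ detecting the centre and then \cite[Lemma~4.15]{ABO19} to pass to a generating set, so it is a safe bet that \cite[Lemma~4.2]{HRSS22} proceeds precisely by correcting such a $q_0$ so that it vanishes on the $C_\alpha$, then taking a sublevel set --- which is your plan.

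Your first paragraph is entirely correct, including the two-sided estimate $|g|_{S_\mu}\asymp |q(g)|+1$ and the conclusion that $Z_\mu\hookrightarrow\Cay(G_\mu,S_\mu)$ is a $1$--dense quasi-isometric embedding. Your verification that $C_\alpha\cap Z_\mu=1$ (from condition~(4)), that the images $c_\alpha\in H_\mu$ are pairwise non-commensurable up to conjugacy (lifting commensurability in $H_\mu$ back to commensurability of the $K_i$ in $G_\mu$ via the $\Z^2$ preimage, then invoking condition~(3)), and that the commensurator of $\langle c_\alpha\rangle$ in $H_\mu$ is the abelian group $p(K_i)$ (so no inversion), are all sound.

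The only step that needs more than you have written is the claim that each $c_{\alpha'}$ acts \emph{elliptically} on the quasi-tree built from the $H_\mu$--translates of the axis of $c_\alpha$. This is true, but it is not formal: one has to show that $\sup_{Y\in\mathbb Y_\alpha}\diam\bigl(\pi_Y(\text{axis of }c_{\alpha'})\bigr)<\infty$, and then choose $K$ larger than this bound so that every term in the distance formula of Proposition~\ref{prop:BBFDistanceProp} vanishes. The uniform bound follows from acylindricity of $H_\mu\curvearrowright\Cay(H_\mu)$ --- two axes of loxodromics with disjoint limit sets and bounded translation lengths can $\delta$--fellow-travel for at most a length depending only on those translation lengths and the acylindricity constants --- but this deserves at least a sentence. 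An alternative that sidesteps this entirely is to construct $\bar q_\alpha$ directly as an Epstein--Fujiwara counting quasimorphism for a high power of $c_\alpha$: the same non-commensurability then gives $\bar q_\alpha(c_{\alpha'})=0$ because the axis of $c_{\alpha'}$ contains no long subword read along the axis of $c_\alpha$. Either route works; the second is a little more self-contained.
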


\begin{rem}
\label{rem:loxellipaction}
    The quasi-line $\Cay(G_{\mu}, S_{\mu})$ satisfies:
    \begin{itemize}
        \item The center $Z_{\mu}$ of $G_{\mu}$ acts loxodromically on $\Cay(G_{\mu}, S_{\mu})$.
        \item If $\omega$ is an adjacent vertex to $\mu$ in $\Gamma$, then each cyclic subgroup of $G_\mu$ conjugate to $Z_{\omega}$ acts elliptically on $\Cay(G_{\mu}, S_{\mu})$.
    \end{itemize}
\end{rem}

Let $\mathcal{G}$ be a graph of finitely generated groups, and let $G \curvearrowright T$ be the action of $G=\pi_1(\mathcal G)$ on the associated Bass-Serre tree {{of $\mathcal{G}$}} ({{we refer the reader to Section~2.5 in \cite{CK02} for a brief discussion}}). For each vertex $v$ of the Bass-Serre tree $T$, let $\check{v}$ denote the vertex $\mu$ of $\Gamma$ so that $v$ represents $gG_{\mu}$ for some $g$ in $G$.
For each vertex group $G_\mu$ and edge group $G_\alpha$, fix once and for all finite symmetric generating sets $J_\mu$ and $J_\alpha$ respectively, such that $J_\alpha=J_{\bar{\alpha}}$ and $\tau_\alpha\left(J_\alpha\right) \subseteq J_{\alpha^{+}}$.

We briefly sketch the description of the Bass-Serre space $X$ for the graph of groups $\mathcal G$ and refer the reader to \cite{HRSS22} for a full description of the space.  Given a vertex $v$ of $T$, the associated vertex space $X_v$ of $X$ is a graph isometric to $\Cay(G_{\check{v}},J_{\check{v}})$.   If $e$ is a (directed) edge in $T$, then the associated edge space $X_e$ is isometric to $\Cay(G_{\check{e}},J_{\check{e}} )$.  
Edges are added between the vertex and edge spaces so that the maps $\tau_{\check{e}}$ induce isometric embeddings of the edge spaces into the vertex spaces, which we denote by $\tau_e\colon X_e\to X_{e^+}$ and $\tau_{\bar e}\colon X_e\to X_{e^-}$.  

Suppose $\mathcal G$ is an admissible graph of groups with Bass-Serre tree $T$ and Bass-Serre space $X$. For each vertex $\mu$ of $\Gamma$, let $S_\mu$ be given by Lemma~\ref{lem:HRSS22}. Without loss of generality, we can assume that $J_{\mu}$ is contained in $S_{\mu}$, where $J_{\mu}$ is the fixed generating set of $G_{\mu}$.

\begin{defn}[Subspaces $L_v$ and $\mathcal H_v$]
\label{defn:quasilineL_v}
 Suppose the vertex $v\in T$ represents $gG_{\check{v}}$. Let $L_v$  be the graph with vertex set $gG_{\check{v}}$ and with an edge connecting $x, y\in gG_{\check{v}}$ if $x^{-1}y\in S_{\check{v}}$.  In particular, $L_v$ is isometric to $\Cay(G_{\check{e}}, S_{\check{e}})$, which is a quasi-line by Lemma~\ref{lem:HRSS22}.
 
 Let $\mathcal H_v$  be the graph with vertex set $gG_{\check{v}}$ and with an edge connecting $x,y\in gG_{\check{v}}$ if $x^{-1}y\in J_{\check{v}}\cup Z_{\check{v}}$. It is isometric to $\Cay(G_{\check{v}},J_{\check{v}}\cup Z_{\check{v}})$.
\end{defn}

Since $L_v$ and $\mathcal H_v$ are each obtained from $X_v$ by adding extra edges, there are distance non-increasing maps $p_v \colon X_v \to L_v$ and $i_v\colon X_v\to \mathcal H_v$ that are the identity on vertices.  The space $\mathcal H_v$ is constructed to represent the geometry of $H_{\check{v}} = G_{\check{v}}/Z_{\check{v}}$ and is relatively hyperbolic:

\begin{lem}[{\cite[Lemma~2.15]{HRSS22}}]
\label{lem:RHGHRSS22}
   $\mathcal{H}_v$ is hyperbolic relative to the collection 
\[
\mathcal{P}_v = 
\{ \ell_{e} \vcentcolon = i_{v} (\tau_{e}(X_e)) \, |\, e \in E(T)\,\,\text{such that}\,\, e^{+} = v \}.
\] 
\end{lem}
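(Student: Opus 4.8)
The plan is to prove that $\mathcal{H}_v$ is hyperbolic relative to the collection $\mathcal{P}_v = \{\ell_e := i_v(\tau_e(X_e)) \mid e \in E(T), e^+ = v\}$ by transferring a known relative hyperbolicity statement through the coned-off picture. First I would identify $\mathcal{H}_v$ with $\Cay(G_{\check v}, J_{\check v} \cup Z_{\check v})$, which by definition is the Cayley graph of $G_{\check v}$ with respect to the finite generating set $J_{\check v}$ together with the (infinite) central cyclic subgroup $Z_{\check v}$; adjoining $Z_{\check v}$ to the generating set is, up to quasi-isometry, the same as coning off all cosets of $Z_{\check v}$ in $\Cay(G_{\check v}, J_{\check v})$. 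Since $H_{\check v} = G_{\check v}/Z_{\check v}$ is a non-elementary hyperbolic group by Definition~\ref{defn:admissible}(2), the central extension $1 \to Z_{\check v} \to G_{\check v} \to H_{\check v} \to 1$ gives that $\Cay(G_{\check v}, J_{\check v} \cup Z_{\check v})$ is quasi-isometric to (a Cayley graph of) $H_{\check v}$, hence hyperbolic. This establishes the hyperbolicity of the ambient space; the content is to locate the peripheral structure.

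Next I would describe the subspaces $\ell_e$ inside $\mathcal{H}_v$. An edge $e$ with $e^+ = v$ corresponds to a conjugate of an edge subgroup $G_\alpha \cong \Z^2$ sitting inside $G_{\check v}$, and $\tau_e(X_e)$ is (isometric to) a coset of that $\Z^2$ subgroup in $\Cay(G_{\check v}, J_{\check v})$. Under the projection $G_{\check v} \to H_{\check v}$, this $\Z^2$ maps to a cyclic (virtually $\Z$) subgroup of $H_{\check v}$, because the edge subgroup contains a finite-index copy of $Z_{\check v}$ by Definition~\ref{defn:admissible}(4), and $Z_{\check v}$ is exactly the kernel. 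So the image of $\ell_e$ in $H_{\check v}$ is (a bounded neighborhood of) a coset of a virtually cyclic subgroup. The admissibility condition~(3) — the non-commensurability of distinct edge subgroups and of $gK_ig^{-1}$ with $K_i$ — is precisely what guarantees that these virtually cyclic subgroups of $H_{\check v}$ are \emph{almost malnormal} as a family, which is the standard algebraic criterion for the hyperbolic group $H_{\check v}$ to be hyperbolic relative to the collection of these subgroups (Bowditch; see also Osin's and Dahmani's characterizations). The key step is to verify this malnormality carefully: distinct cosets $gK_1 h^{-1}$ and $K_2$ of edge subgroups descend to cosets of virtually cyclic subgroups in $H_{\check v}$, and their coarse intersection is bounded exactly because the original subgroups are non-commensurable after conjugation.

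From there, the conclusion that $\mathcal{H}_v$ is hyperbolic \emph{relative to} $\mathcal{P}_v$ follows by matching the peripheral data: the quasi-isometry $\mathcal{H}_v = \Cay(G_{\check v}, J_{\check v} \cup Z_{\check v}) \to H_{\check v}$ (or a Cayley graph thereof) carries each $\ell_e$ coarsely onto a coset of one of the peripheral virtually cyclic subgroups, and conversely each such coset is the image of some $\ell_e$ with $e^+ = v$; relative hyperbolicity is a quasi-isometry invariant of the pair (space, collection of subspaces) in the appropriate sense (e.g. via the fellow-traveling of peripheral cosets, or via Groves--Manning's cusped space, or via the bounded coset penetration / BCP characterization), so the relative hyperbolicity of $(H_{\check v}, \{\text{v.c. subgroups}\})$ transfers back to $(\mathcal{H}_v, \mathcal{P}_v)$. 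I expect the main obstacle to be the bookkeeping in the previous step: one must be careful that the \emph{full} conjugacy-class worth of cosets $\{\ell_e : e^+ = v\}$ corresponds exactly to the cosets of the almost-malnormal family in $H_{\check v}$ — no cosets are missed and none are duplicated — and that the maps $\tau_e$ are compatible with the $G_{\check v}$-action so that the correspondence is equivariant; the purely geometric transfer of relative hyperbolicity along a quasi-isometry respecting peripheral structure is then routine, and I would cite \cite{HRSS22} or standard references rather than reprove it.
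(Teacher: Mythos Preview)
The paper does not prove this lemma at all: it is stated as a citation to \cite[Lemma~2.15]{HRSS22} and used as a black box. So there is no ``paper's own proof'' to compare against; your proposal is supplying an argument where the authors chose simply to quote one.

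That said, your outline is the correct and standard argument, and is essentially what one would expect the proof in \cite{HRSS22} to be. The identification of $\mathcal{H}_v$ with a Cayley graph of $H_{\check v}$ (up to quasi-isometry) via the coned-off central subgroup is right, the projection of each edge group $K_i \cong \Z^2$ to a virtually cyclic subgroup of $H_{\check v}$ follows from Definition~\ref{defn:admissible}(4) as you say, and invoking Bowditch's criterion (hyperbolic relative to an almost malnormal quasiconvex family) is the appropriate tool. The one place to be careful is exactly where you flag it: translating the non-commensurability condition~(3) in $G_{\check v}$ into almost malnormality of the images in $H_{\check v}$ requires a short argument passing through the central quotient, since commensurability of $\bar g \bar K_i \bar g^{-1}$ with $\bar K_j$ in $H_{\check v}$ must be lifted to commensurability of $g K_i g^{-1}$ with $K_j$ in $G_{\check v}$ (using that both contain a finite-index subgroup of $Z_{\check v}$). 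Your final paragraph correctly identifies the bookkeeping of matching $\{\ell_e : e^+ = v\}$ with the full coset family as the remaining routine check.
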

It follows from \cite{Sis13} that  there is a  coarse closest point projection map
$$\proj_{\ell_e} \colon \mathcal{H}_v \to \ell_e$$
that is coarsely Lipschitz with constants independent of $e$ and $v$.

\begin{rem}
\label{rem:boundeddiameter}
As peripheral subsets in a relatively hyperbolic space, the sets
$
\{ \ell_{f} \, |\, f \in E(T) \textrm{ and }  f^{+} = v \}
$ together with the maps $\proj_{\ell_f}$ satisfy the projection axioms for a constant $\xi_0$. 
\end{rem}

We now show that if $e$ is an edge from $u$ to $v$,  the various maps defined above can be composed to form a quasi-isometry between the quasi-line $\ell_e\in\mathcal P_u$ and the quasi-line $L_v$.  Let $  \psi_{e} \colon \ell_{\bar{e}} \to L_v$ be the map from \cite{HRSS22} which is defined as the restriction to $\ell_{\bar e}$ of the composition
  \begin{equation}\label{eq:psi_e}
p_v \circ \tau_e \circ \tau_{\bar{e}}^{-1} \circ i_{u}^{-1}.
  \end{equation}   In \cite{HRSS22}, the authors prove that $\psi_e$  is coarsely Lipschitz and note that $\psi_e$ is, in fact, a quasi-isometry. We prove a more general result:

\begin{lem}\label{lem:qlineQI}
Let $\ell_1,\ell_2$ be two quasi-lines, and suppose a group $G$ acts coboundedly on both $\ell_1$ and $\ell_2$.  Any $G$--equivariant coarsely Lipschitz map $\psi\colon \ell_1\to\ell_2$ is a quasi-isometry.
\end{lem}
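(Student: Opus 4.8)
The plan is to show that $\psi$ admits a coarse Lipschitz inverse, and then use the standard fact that a coarsely Lipschitz map with a coarsely Lipschitz coarse inverse is a quasi-isometry. Since $\psi$ is already assumed coarsely Lipschitz, the only real content is (i) producing a map $\phi\colon \ell_2 \to \ell_1$ which is also coarsely Lipschitz, and (ii) checking that $\phi \circ \psi$ and $\psi \circ \phi$ are each a bounded distance from the identity.

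\textbf{Step 1: Build a candidate coarse inverse.} Both $\ell_1$ and $\ell_2$ are quasi-lines, hence each is quasi-isometric to $\mathbb{R}$; fix such quasi-isometries and thereby identifications up to bounded error. A coarsely Lipschitz $G$--equivariant map between two quasi-lines, in these coordinates, is coarsely a map $\mathbb{R} \to \mathbb{R}$ that commutes with the $G$--actions up to bounded error. Because $G$ acts coboundedly on $\ell_1$, pick a basepoint $x_0 \in \ell_1$ with $G$--orbit coarsely dense, and define $\phi$ on the orbit $\psi(Gx_0)$ by $\phi(g \cdot \psi(x_0)) := g \cdot x_0$, extending to all of $\ell_2$ by sending an arbitrary point to $\phi$ of a nearest orbit point. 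The first thing to verify is that $\phi$ is well defined up to bounded error: if $g\psi(x_0)$ and $h\psi(x_0)$ are close in $\ell_2$, we must show $gx_0$ and $hx_0$ are close in $\ell_1$. This is where the geometry of quasi-lines enters crucially and is the main obstacle (see below).

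\textbf{Step 2: The key point — $\psi$ is coarsely injective.} The heart of the argument is that $\psi$ does not coarsely collapse large distances: there is a function, tending to infinity, controlling $d_{\ell_1}(x,y)$ from below in terms of $d_{\ell_2}(\psi(x),\psi(y))$. I would argue by contradiction using coboundedness and the structure of a quasi-line. Suppose there are points $x_n, y_n \in \ell_1$ with $d_{\ell_1}(x_n,y_n) \to \infty$ but $d_{\ell_2}(\psi(x_n),\psi(y_n))$ bounded. Using coboundedness of the $G$--action on $\ell_1$, translate so that, say, $x_n$ stays within bounded distance of a fixed basepoint; passing to the point of view of the real line, $y_n$ then escapes to $\pm\infty$ along the quasi-line. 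Now exploit that in a quasi-line, a segment $[x_n, y_n]$ of length $\to \infty$ contains $\sim d_{\ell_1}(x_n,y_n)$ orbit points that are pairwise roughly a fixed distance apart and that are coarsely \emph{ordered} along $\ell_1$; equivariance forces their $\psi$--images to be a corresponding ordered sequence in $\ell_2$, and since $\psi$ is coarsely Lipschitz these images cannot all collapse into a bounded set unless the action on $\ell_2$ fails to be cobounded or has bounded orbits — contradicting that $\ell_2$ is an unbounded quasi-line on which $G$ acts coboundedly. Concretely: if $\psi$ collapsed $[x_n,y_n]$ then the $G$--orbit of $\psi(x_0)$ would be contained (up to bounded error) in a bounded neighborhood of $\psi(x_0)$ in the direction of that segment, and combined with equivariance and coboundedness this would make $\ell_2$ bounded. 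I expect this coarse-injectivity step to require the most care, precisely because it is where one must use that \emph{both} spaces are quasi-lines and not merely hyperbolic.

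\textbf{Step 3: Assemble the quasi-isometry.} Given coarse injectivity from Step 2, $\phi$ from Step 1 is well defined up to bounded error and is coarsely Lipschitz: if $p, q \in \ell_2$ are close, choose nearby orbit points $g\psi(x_0), h\psi(x_0)$; closeness of these forces $d_{\ell_1}(gx_0, hx_0)$ bounded by Step 2, i.e.\ $d_{\ell_1}(\phi(p),\phi(q))$ bounded. By construction $\phi \circ \psi$ is a bounded distance from the identity on the coarsely dense orbit $Gx_0$, hence (using coarse Lipschitzness) on all of $\ell_1$; similarly $\psi \circ \phi$ is a bounded distance from the identity on $\ell_2$, using that $\psi$ is coarsely surjective — which itself follows because its image contains $\psi(Gx_0) = G\psi(x_0)$, coarsely dense in $\ell_2$ by coboundedness. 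Therefore $\psi$ is a coarsely Lipschitz map with a coarsely Lipschitz quasi-inverse, so it is a quasi-isometry. Finally, applying this to $\psi = \psi_e$ of \eqref{eq:psi_e}, which is $G_{\check e}$--equivariant (or at least $C_{\bar e}$--equivariant) and coarsely Lipschitz between the quasi-lines $\ell_{\bar e}$ and $L_v$, recovers the assertion of \cite{HRSS22} that $\psi_e$ is a quasi-isometry.
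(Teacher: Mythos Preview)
Your strategy---build a coarse inverse directly---is different from the paper's. The paper observes that the existence of $\psi$ gives $[G\curvearrowright \ell_1]\succcurlyeq [G\curvearrowright \ell_2]$ in $\mathcal{H}(G)$; since both structures are lineal, \cite[Theorem~4.22]{ABO19} forces equivalence, producing a coarsely $G$--equivariant quasi-isometry $\Phi\colon\ell_1\to\ell_2$. One then checks, using only coboundedness and equivariance, that $\sup_x d_{\ell_2}(\Phi(x),\psi(x))<\infty$, so $\psi$ is itself a quasi-isometry. This outsources the hard step---your Step~2---to a citation.

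Your Step~2, as written, does not go through. The claim that ``equivariance forces their $\psi$--images to be a corresponding ordered sequence in $\ell_2$'' is unjustified: equivariance says $\psi(g_i x_0)=g_i\psi(x_0)$, but there is no reason the orbit points $g_i\psi(x_0)$ should appear along $\ell_2$ in the same order as the $g_i x_0$ appear along $\ell_1$---establishing any such monotonicity is essentially equivalent to what you are trying to prove. Likewise, collapsing a single long segment does not by itself force the entire $G$--orbit in $\ell_2$ to be bounded; you have only shown that \emph{some} orbit points have nearby images. Your outline can be repaired: since $G$ acts coboundedly on the unbounded quasi-line $\ell_2$, some $h\in G$ is loxodromic on $\ell_2$, and the coarse Lipschitz bound $d_{\ell_2}(h^n\psi(x_0),\psi(x_0))\le L\,d_{\ell_1}(h^n x_0,x_0)+L$ then forces $h$ to be loxodromic on $\ell_1$ as well. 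Now $\langle h\rangle$ acts coboundedly on both quasi-lines, $\psi$ restricted to the orbit $\{h^n x_0\}$ is manifestly a quasi-isometry onto $\{h^n\psi(x_0)\}$, and coarse density of these orbits finishes the job.
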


\begin{proof}
  Since there is a $G$--equivariant coarsely Lipschitz map from $\ell_1$ to $\ell_2$, we have $[G\curvearrowright \ell_1]\succcurlyeq [G\curvearrowright \ell_2]$ in the poset $\mc H(G)$.  However, since  $G\curvearrowright \ell_1$ and $G\curvearrowright \ell_2$ are both lineal, \cite[Theorem~4.22]{ABO19} implies that these actions must be equivalent.  Thus there is a coarsely $G$--equivariant quasi-isometry $\Phi\colon \ell_1\to \ell_2$.  We will show that $\Phi$ and $\psi$ differ by a uniformly bounded amount, which will then show that $\psi$ is also a quasi-isometry.
  
  Fix a basepoint $x_0\in \ell_1$.  Since $G\curvearrowright \ell_1$ is cobounded, there is a constant $B$ such that for any $x\in \ell_1$, there is some $g\in G$ such that $d_{\ell_1}(x,gx_0)\leq B$.  Since $\psi$ is coarsely Lipschitz and $\Phi$ is a quasi-isometry, there is a constant $A$, depending on $B$ and the coarse Lipschitz constants for $\Phi$ and $\psi$, such that $d_{\ell_2}(\Phi(x), \Phi(gx_0))\leq A$ and $d_{\ell_2}(\psi(x),\psi(gx_0))\leq A$.  Moreover, since $\Phi$ is coarsely $G$--equivariant, there is a constant $C$ such that  $d_{\ell_2}(\Phi(gx_0),g\Phi(x_0))\leq C$.  Let $D=d_{\ell_2}(\Phi(x_0),\psi(x_0))$.
  
  By the triangle inequality and $G$--equivariance of $\psi$, we have
  \begin{align*}
  d_{\ell_2}(\Phi(x),\psi(x)) & \leq d_{\ell_2}(\Phi(x),g\Phi(x_0)) + d_{\ell_2}(g\Phi(x_0),g\psi(x_0)) +  d_{\ell_2}(\psi(gx_0),\psi(x)) \\
  & \leq (A+C) + D + A,
  \end{align*}
completing the proof. 
  \end{proof}

We now complete the proof that $\psi_e$ is a quasi-isometry.
\begin{lem}
\label{lem:psiquasi}
There are constants $\lambda\geq 1$ and $c\geq 0$ depending only on $\mc G$ such that the following holds.  For any oriented edge $e$ in the Bass-Serre tree $T$ of $\mathcal{G}$, the map $\psi_{e} \colon \ell_{\bar{e}} \to L_v$ is a $(\lambda, c)$--quasi-isometry.  
\end{lem}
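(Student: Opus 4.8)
The plan is to deduce the statement from Lemma~\ref{lem:qlineQI}, applied to the action of the edge stabilizer $\Stab_G(e)$ on the two quasi-lines $\ell_{\bar e}$ and $L_v$, and then to promote the resulting quasi-isometry constants to uniform ones using that $\Gamma$ is finite. Fix an oriented edge $e$ from $u$ to $v$ in $T$ and set $G_e = \Stab_G(e) \le G$; this is a conjugate of an edge group $G_{\check e}\cong\Z^2$. Because the Bass--Serre space $X$, the auxiliary spaces $\mathcal H_u, \mathcal H_v, L_v$, and all of the maps $\tau_e,\tau_{\bar e}, i_u, p_v$ appearing in the definition \eqref{eq:psi_e} of $\psi_e$ are constructed $G$--equivariantly, the group $G_e$ acts on $X_e$, $X_v$, $\mathcal H_u$, $\ell_{\bar e}$, and $L_v$, and $\psi_e\colon\ell_{\bar e}\to L_v$ is $G_e$--equivariant. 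It is coarsely Lipschitz by \cite{HRSS22}, and $\ell_{\bar e}$ and $L_v$ are quasi-lines by Lemma~\ref{lem:RHGHRSS22} and Lemma~\ref{lem:HRSS22}(1), respectively. Thus once we verify that $G_e$ acts coboundedly on each of $\ell_{\bar e}$ and $L_v$, Lemma~\ref{lem:qlineQI} gives that $\psi_e$ is a quasi-isometry.

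For the action on $\ell_{\bar e}$, note that the composition $i_u\circ\tau_{\bar e}\colon X_{\bar e}\to\mathcal H_u$ is a $G_e$--equivariant coarsely Lipschitz map whose image is exactly $\ell_{\bar e}$, and $G_e\cong G_{\check e}$ acts coboundedly on $X_{\bar e}=\Cay(G_{\check e},J_{\check e})$ since $J_{\check e}$ is finite; hence $G_e$ acts coboundedly on $\ell_{\bar e}$. For the action on $L_v$, observe that $G_e\le\Stab_G(v)$, which acts on $L_v$; by admissibility axiom (4) of Definition~\ref{defn:admissible}, the cyclic subgroup $\tau_e^{-1}(Z(G_{\check v}))$ of $G_{\check e}$ is infinite, so $G_e$ contains a finite-index subgroup of a conjugate of the center $Z_{\check v}$. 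Since $Z_{\check v}$ acts coboundedly (indeed loxodromically) on $L_v\cong\Cay(G_{\check v},S_{\check v})$ by Lemma~\ref{lem:HRSS22}(2) (cf.\ Remark~\ref{rem:loxellipaction}), and a finite-index subgroup of a group acting coboundedly by isometries on a metric space also acts coboundedly, we conclude that $G_e$ acts coboundedly on $L_v$. Applying Lemma~\ref{lem:qlineQI} then shows $\psi_e$ is a $(\lambda_e,c_e)$--quasi-isometry for some constants.

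It remains to make the constants depend only on $\mathcal G$. Since $\Gamma$ is finite, $G$ acts on the oriented edges of $T$ with finitely many orbits. If $e'=ge$, then $g$ carries the entire configuration $(X_u, X_e, X_v, \mathcal H_u, L_v, \tau_e, \tau_{\bar e}, i_u, p_v)$ defining $\psi_e$ to the one defining $\psi_{e'}$, so $\psi_{e'} = g\circ\psi_e\circ g^{-1}$ under the natural identifications; as $g$ acts by isometries, $\psi_{e'}$ is a quasi-isometry with the same constants as $\psi_e$. Taking $\lambda$ and $c$ to be the maxima of the constants over a finite set of orbit representatives finishes the argument. I expect the only step requiring genuine care to be the coboundedness of $G_e\curvearrowright L_v$, where admissibility axiom (4) is essential (its failure is exactly what allows an edge group to project with bounded image into a vertex quasi-line via the map $C_\alpha$ of Lemma~\ref{lem:HRSS22}); the passage to uniform constants is then a soft consequence of the finiteness of $\Gamma$, and the rest is bookkeeping with the equivariant maps of \cite{HRSS22} and Lemma~\ref{lem:qlineQI}.
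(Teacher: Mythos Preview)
Your argument is correct and follows the same route as the paper: cite \cite{HRSS22} for the coarse Lipschitz property, observe $G_e$--equivariance, apply Lemma~\ref{lem:qlineQI}, and then pass to uniform constants via the finiteness of $G$--orbits of oriented edges in $T$. The paper's proof is terser---it simply asserts that $G_{\check e}$ acts on both $\ell_{\bar e}$ and $L_v$ by virtue of their being Cayley graphs---whereas you spell out the coboundedness on $L_v$ using admissibility axiom~(4), which is indeed the substantive point the paper leaves implicit. Your parenthetical about $C_\alpha$ is slightly off (Lemma~\ref{lem:HRSS22}(3) concerns the image of the \emph{other} vertex's center, not the failure of coboundedness of the full edge group), but this does not affect the argument.
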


\begin{proof}
In Lemma~6.16 of \cite{HRSS22}, the authors prove that the map $\psi_e$ is coarsely Lipschitz.    Moreover, from the definitions of $\ell_{\bar e}$ and $L_v$ as Cayley graphs with respect to infinite generating sets, $G_{\check e}$ acts by isometries on both, and $\psi_e$ is $G_{\check e}$--equivariant.  Therefore, $\psi_e$ is a quasi-isometry by Lemma~\ref{lem:qlineQI}.  As there are only finitely many $G$--orbits of edges in $T$, we can choose the constants of these quasi-isometries to be independent of the edge $e$. 
\end{proof}


\section{Croke-Kleiner admissible groups and $\mc H$--inaccessibility}
\label{CKA:finiteindex}

In this section, we prove Theorem~\ref{thm1}: every Croke--Kleiner admissible group has a finite index subgroup that is  $\mathcal{H}$--inaccessible.

Fix a Croke--Kleiner admissible group $\mathcal{G} = (\Gamma, \{G_{\mu}\}, \{G_{\alpha} \}, \{\tau_{\alpha} \})$.  We partition the vertex set $T^0$ of the Bass-Serre tree into two disjoint collections of vertices $\mathcal{V}_1$ and $\mathcal{V}_2$ such that if $v$ and $v'$ are in $\mathcal{V}_i$ then $d_{T}(v, v')$ is even. Since any automorphism of $T$ either preserves $\mathcal V_1$ and $\mathcal V_2$ setwise or interchanges them, we have:

\begin{lem}[{\cite[Lemma~4.6]{NY20}}]
\label{lem:index2subgroup}
Let $\mathcal{G} = (\Gamma, \{G_{\mu}\}, \{G_{\alpha} \}, \{\tau_{\alpha} \})$ be a Croke-Kleiner admissible group. 
There exists a subgroup $G' \le G = \pi_1(\mathcal{G})$ of index at most $2$ in $G$ so that $G'$ preserves $\mathcal{V}_1$ and $\mathcal{V}_2$ and $G'$ is also a Croke-Kleiner admissible group.
\end{lem}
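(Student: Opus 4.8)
The plan is to exhibit $G'$ as the stabilizer of the partition $\{\mathcal V_1, \mathcal V_2\}$ and to recognize this stabilizer as the fundamental group of a subgraph of groups obtained by "doubling" the underlying graph $\Gamma$. First I would record the basic fact underlying the partition: the Bass--Serre tree $T$ of an admissible graph of groups is a bipartite tree whenever $\Gamma$ has this property, but in general we only know that the bipartition of $T$ into classes at even mutual distance exists because $T$ is a tree; so $\mathcal V_1, \mathcal V_2$ are well defined once we fix a base vertex. The action $G \curvearrowright T$ is by simplicial automorphisms, and any simplicial automorphism of a tree preserves the parity of distance between pairs of vertices; hence each $g \in G$ either preserves $\{\mathcal V_1, \mathcal V_2\}$ (mapping $\mathcal V_i$ to $\mathcal V_i$) or swaps them. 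This gives a homomorphism $G \to \Z/2\Z$, and I would set $G' = \pi_1(\mathcal G)$ to be its kernel, so that $[G:G'] \le 2$ and $G'$ preserves $\mathcal V_1$ and $\mathcal V_2$ setwise.

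Next I would show $G'$ is again a Croke--Kleiner admissible group. Since $G'$ has finite index in $G$, it acts on the same tree $T$ with the same edge and vertex stabilizers (the stabilizer in $G'$ of a vertex $v$ is $G_v \cap G'$, which has index $1$ or $2$ in $G_v$), and $T/G'$ is a finite graph of groups $\mathcal G'$ with $G' = \pi_1(\mathcal G')$; this is the standard structure theory of groups acting on trees. Because $G'$ does not interchange $\mathcal V_1$ and $\mathcal V_2$, the graph $\Gamma' = T/G'$ is bipartite and surjects $2$-to-$1$ (or $1$-to-$1$) onto $\Gamma$; it still has at least one edge. For the admissibility axioms of Definition~\ref{defn:admissible}: each vertex group $G'_{\mu} = G_{\mu} \cap G'$ is a finite-index subgroup of $G_{\mu}$, so it still has center $\cong \Z$ (the center of a finite-index subgroup of $G_\mu$ contains a finite-index subgroup of $Z(G_\mu) \cong \Z$, hence is infinite cyclic, and one checks it is exactly $Z(G_\mu) \cap G'$), and the quotient by this center is a finite-index subgroup of the non-elementary hyperbolic group $H_\mu$, hence is itself non-elementary hyperbolic. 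Each edge group $G'_{\alpha} = G_\alpha \cap G'$ is a finite-index subgroup of $\Z^2$, hence $\cong \Z^2$. Axiom (3), the non-commensurability condition, is inherited because commensurability of subgroups is unaffected by passing to finite-index overgroups and the conjugating elements for $G'$ are a subset of those for $G$. Axiom (4), that the subgroup generated by the two pulled-back central directions has finite index in the edge group, is also inherited since finite-index intersections preserve the "finite index" relation.

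The main obstacle I anticipate is bookkeeping rather than conceptual: correctly matching the combinatorial bipartition of $T$ with the algebraic structure of $\mathcal G'$, and in particular verifying that the finite-index vertex subgroups $G_\mu \cap G'$ retain the precise structural features (center exactly $\Z$, hyperbolic quotient non-elementary, the commensurability and finite-index conditions among the new edge subgroups) demanded by Definition~\ref{defn:admissible}. Since this lemma is quoted from \cite[Lemma~4.6]{NY20} and the argument is a routine application of Bass--Serre theory together with the observation that automorphisms of a tree respect distance parity, I would present the proof at the level of a sketch: define $G'$ as the kernel of $G \to \Z/2$, invoke the structure theorem for $G'$ acting on $T$ to get $\mathcal G'$, and then check the four admissibility axioms are stable under passage to finite-index subgroups, citing that all of center-$\cong\Z$, non-elementary hyperbolicity, being $\Z^2$, non-commensurability, and the finite-index axiom (4) persist. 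I would not grind through each axiom in full detail, deferring to \cite{NY20}.
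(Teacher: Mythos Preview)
Your approach matches the paper's: the paper does not prove the lemma but simply cites \cite[Lemma~4.6]{NY20}, prefaced by the same parity observation you make (``any automorphism of $T$ either preserves $\mathcal V_1$ and $\mathcal V_2$ setwise or interchanges them''). Defining $G'$ as the kernel of the resulting homomorphism $G\to\Z/2\Z$ is exactly the intended construction.

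One simplification worth noting: you write that the stabilizer in $G'$ of a vertex $v$ is $G_v\cap G'$, ``which has index $1$ or $2$ in $G_v$,'' and then worry about checking that finite-index subgroups of vertex groups retain the required structure. In fact this index is always $1$: any $g\in G_v$ fixes $v$, hence sends $\mathcal V_i\ni v$ to itself, hence lies in $G'$. The same holds for edge stabilizers. So the vertex and edge groups of $\mathcal G'$ are \emph{identical} to those of $\mathcal G$, and the underlying graph $\Gamma' = T/G'$ is the bipartite double cover of $\Gamma$ (or $\Gamma$ itself when $\Gamma$ is already bipartite). With this observation, axioms (2)--(4) of Definition~\ref{defn:admissible} are immediate---the groups, edge maps, and images $K_i$ are literally unchanged---and axiom (1) holds since $\Gamma'$ covers a graph with at least one edge. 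This removes the need for your discussion of centers of finite-index subgroups and persistence of non-commensurability.
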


Let $G'$ be the finite index subgroup of $G$ given by Lemma~\ref{lem:index2subgroup}.
In light of Lemma~\ref{lem:key}, to show that $G'$ is $\mathcal H$-inaccessible, it suffices to construct commuting $a_i\in G'$ and actions $G'\curvearrowright X_i$ for $i=1,2$ such that $a_i$ is elliptic with respect to the action $G'\curvearrowright X_{3-i}$ and loxodromic with respect to the action $G'\curvearrowright X_i$. Our spaces $X_i$ will be quasi-trees of metric spaces.

\subsection{Construction of group actions}
For notational simplicity, we  replace $G$ by its index $\leq 2$ subgroup $G'$.
For each vertex $v$ in the Bass-Serre tree 
 $T$, let $L_v$ be the quasi-line from Definition~\ref{defn:quasilineL_v}. Recall that $gL_v = L_{gv}$ for any group element $g$ in $G$.

Let $\mathbb{L}_1$ be the collection of quasi-lines $\{ L_v \}_{v \in \mathcal{V}_1}$ and $\mathbb{L}_2$ be the collection of quasi-lines $\{ L_v \}_{v \in \mathcal{V}_2}$.
We define a  projection  of $L_v$ to $L_{v'}$ in $\mathbb L_i$  as follows.

\begin{defn}
[Projection maps in $\mathbb L_i$]
\label{defn:projectionmap}
For any two distinct vertices $v,v'\in \mc V_i$, let  $e'=[w, u]$ and $e=[u,v]$ denote the last two (oriented) edges in $[v',v]$. 
The {\it projection} from $L_{v'}$ into $L_v$ is
\[
\Pi_{L_v} (L_{v'}) \vcentcolon = \psi_{e}(\proj_{\ell_{\bar{e}}}(\ell_{e'})),
\]
where $\psi_e \colon \ell_{\bar{e}} \to L_v$ and $\proj_{\ell_{\bar{e}}} \colon \mathcal{H}_u \to \ell_{\bar{e}}$ are the maps introduced in Section~\ref{sec:pre}.
\end{defn}

\noindent The fact that $d(v, v')$ is even is not necessary for Definition~\ref{defn:projectionmap}, only that $d(v, v') \ge 2$.

We  will  verify that the
$\mathbb{L}_i$   with these projection maps satisfy the projection axioms (see Definition~\ref{defn:Projectionaxioms}) for $i=1,2$.
Let $d_{L_a}(L_b,L_c)$ be the projection distance $\diam \bigl (\Pi_{L_{a}} (L_{b}) \cup \Pi_{L_{a}} (L_{b}) \bigr )$.

\begin{lem}
\label{lem:easy1}
There exists a constant $\lambda >0$ such that $\diam(\Pi_{L_v}(L_{v'}))\leq \lambda$ for any distinct $v,v'\in \mathcal V_i$ for $i=1,2$.  Moreover, if $ {a},  {b}, {c} \in \mathcal V_i$ are distinct vertices with $d_{T}( a, [b, c]) \ge 2$, then $\Pi_{L_a}(L_c) = \Pi_{L_a}(L_b)$.
\end{lem}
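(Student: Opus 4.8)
The plan is to prove both assertions by reducing them to the corresponding properties of the projection maps $\proj_{\ell_e}$ on the relatively hyperbolic space $\mathcal{H}_u$, which we know (Remark~\ref{rem:boundeddiameter}) satisfy the projection axioms with some constant $\xi_0$, together with the fact (Lemma~\ref{lem:psiquasi}) that each $\psi_e$ is a $(\lambda_0,c_0)$--quasi-isometry with constants depending only on $\mathcal{G}$. For the first assertion, fix distinct $v,v'\in\mathcal{V}_i$ and write the last two oriented edges of $[v',v]$ as $e'=[w,u]$ and $e=[u,v]$, so that $\Pi_{L_v}(L_{v'})=\psi_e\bigl(\proj_{\ell_{\bar e}}(\ell_{e'})\bigr)$. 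Since $\ell_{e'}$ and $\ell_{\bar e}$ are both peripheral subsets of the relatively hyperbolic space $\mathcal{H}_u$, projection axiom~(1) for $(\mathcal{P}_u,\{\proj_{\ell_f}\})$ gives $\diam\bigl(\proj_{\ell_{\bar e}}(\ell_{e'})\bigr)\leq\xi_0$; applying the coarsely Lipschitz map $\psi_e$ then yields $\diam\bigl(\Pi_{L_v}(L_{v'})\bigr)\leq\lambda_0\xi_0+c_0=:\lambda$. Crucially, the number of $G$--orbits of edges in $T$ is finite, so $\xi_0$, $\lambda_0$, and $c_0$ — and hence $\lambda$ — can all be chosen uniformly, independent of $v,v'$.

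For the second ("moreover") assertion, the key observation is that the definition of $\Pi_{L_a}(L_c)$ depends on $c$ only through the final two edges of the geodesic $[c,a]$ in $T$. So suppose $a,b,c\in\mathcal{V}_i$ are distinct with $d_T(a,[b,c])\geq 2$. Let $m$ be the point of $[b,c]$ closest to $a$; then $[b,a]$ and $[c,a]$ both pass through $m$ and agree on the sub-segment $[m,a]$, which has length $\geq 2$. Hence the last two oriented edges of $[b,a]$ coincide with the last two oriented edges of $[c,a]$: both equal $e'=[w,u]$, $e=[u,v]$ where $v=a$ and $e',e$ are the terminal edges of $[m,a]$. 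Plugging into Definition~\ref{defn:projectionmap} gives literally the same expression $\psi_e\bigl(\proj_{\ell_{\bar e}}(\ell_{e'})\bigr)$ for both $\Pi_{L_a}(L_b)$ and $\Pi_{L_a}(L_c)$, so they are equal on the nose.

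The main point requiring care — though it is more bookkeeping than genuine obstacle — is the tree-combinatorial claim that $d_T(a,[b,c])\geq 2$ forces the terminal two edges of $[b,a]$ and of $[c,a]$ to agree. This is where one uses that $T$ is a tree: geodesics are unique, and if $m\in[b,c]$ is the projection of $a$ to $[b,c]$, then $[b,a]=[b,m]\cup[m,a]$ and $[c,a]=[c,m]\cup[m,a]$ as concatenations of geodesics, so the portions of $[b,a]$ and $[c,a]$ within distance $2$ of $a$ lie inside the common segment $[m,a]$ whose length is exactly $d_T(a,[b,c])\geq 2$. One should double-check the orientation conventions in Definition~\ref{defn:projectionmap} (the edges are oriented \emph{towards} $v=a$) so that $e'$ and $e$ are genuinely the same oriented edges in both cases; once that is confirmed, equality is immediate and no estimate is needed. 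It is worth remarking, as the text after the definition notes, that only $d_T(a,[b,c])\geq 2$ (not parity) is used here, so the statement is really about arbitrary vertices at tree-distance $\geq 2$ from the relevant geodesic.
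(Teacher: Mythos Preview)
Your proposal is correct and follows essentially the same approach as the paper: for the diameter bound you invoke the projection-axiom bound from Remark~\ref{rem:boundeddiameter} and push it through the uniformly coarsely Lipschitz map $\psi_e$, and for the ``moreover'' clause you use the tree-tripod argument (the paper phrases it as ``considering the convex hull of $\{a\}\cup[b,c]$'') to see that the last two oriented edges of $[b,a]$ and $[c,a]$ agree. Your write-up is simply more explicit about the tree combinatorics and the uniformity of constants than the paper's two-sentence proof.
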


\begin{proof}
By Remark~\ref{rem:boundeddiameter}, there is a uniform bound on the diameter of $\proj_{\ell_{\bar{e}}}(\ell_{e'})$. Combined with the fact that $\psi_e$ is uniformly coarsely Lipschitz, this gives the constant $\lambda$. Considering the convex hull of $\{a\}\cup [b,c]$, we see that, orienting $[c,a]$ and $[b,a]$ towards $a$, the last two edges of $[c,a]$ are the same as the last two edges of $[b,a]$. Hence by definition, $ \Pi_{L_a}(L_c) =  \Pi_{L_a}(L_b)$. 
\end{proof}

Let $v$ be a vertex of the Bass-Serre tree $T$. By Remark~\ref{rem:boundeddiameter}, the collection 
$\{ \ell_{f} = i_{v} (\tau_{f}(X_f)) \, |\, f \in E(T)\,\,\text{such that}\, f^{+} = v \}$ satisfies the projection axioms with a constant $\xi_0$.  Let $d_\ell$ denote the projection distances with respect to $\proj_\ell$.  The following lemma follows immediately from Lemma~\ref{lem:psiquasi} and the definitions of $d_{\ell_e}$ and $d_{L_{v}}$.
\begin{lem}
\label{lem:easy3}
There exists a constant $\lambda >0$ such that the following holds. Let $u, v, w$ be distinct vertices in $\mathcal{V}_1$ contained in $\Lk(o)$ for some vertex $o$ in $\mathcal{V}_2$. Let $e=[w,o]$,  $e_1=[u,o],$ and $ e_2=[v,o]$. Then
\[
\frac{1}{\lambda} d_{\ell_{e}}(\ell_{e_1}, \ell_{e_2}) -\lambda \le d_{L_{w}} (L_{u}, L_{v})     \le \lambda d_{\ell_{e}}(\ell_{e_1}, \ell_{e_2}) + \lambda.
\]
\end{lem}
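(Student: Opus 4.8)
The plan is to reduce the comparison of the two projection distances $d_{L_w}(L_u,L_v)$ and $d_{\ell_e}(\ell_{e_1},\ell_{e_2})$ to the fact, already available from Lemma~\ref{lem:psiquasi}, that the map $\psi_e\colon \ell_{\bar e}\to L_w$ is a $(\lambda_0,c_0)$--quasi-isometry with constants depending only on $\mathcal G$. First I would unwind the definitions. By Definition~\ref{defn:projectionmap}, since $u,v,w$ all lie in $\Lk(o)$ with $o\in\mathcal V_2$, the geodesic $[u,w]$ in $T$ passes through $o$ and its last two oriented edges (towards $w$) are $e_1'=[u,o]=e_1$ and $e=[o,w]$; similarly for $[v,w]$ the last two edges are $e_2$ and $e$. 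Hence
\[
\Pi_{L_w}(L_u)=\psi_e\bigl(\proj_{\ell_{\bar e}}(\ell_{e_1})\bigr),\qquad
\Pi_{L_w}(L_v)=\psi_e\bigl(\proj_{\ell_{\bar e}}(\ell_{e_2})\bigr),
\]
so that $d_{L_w}(L_u,L_v)=\diam_{L_w}\bigl(\psi_e(\proj_{\ell_{\bar e}}(\ell_{e_1}))\cup \psi_e(\proj_{\ell_{\bar e}}(\ell_{e_2}))\bigr)$. On the other side, $d_{\ell_e}(\ell_{e_1},\ell_{e_2})=\diam_{\ell_{\bar e}}\bigl(\proj_{\ell_{\bar e}}(\ell_{e_1})\cup\proj_{\ell_{\bar e}}(\ell_{e_2})\bigr)$, noting that $\bar e=[w,o]$ points the right way for the peripheral projections in $\mathcal H_o$, which are the maps appearing in Definition~\ref{defn:projectionmap}.

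The main step is then the elementary observation that a quasi-isometry distorts diameters of subsets in a controlled two-sided way. Concretely, if $\psi\colon A\to B$ is a $(\lambda_0,c_0)$--quasi-isometry and $S\subseteq A$, then
\[
\tfrac1{\lambda_0}\diam(S)-c_0\ \le\ \diam(\psi(S))\ \le\ \lambda_0\diam(S)+c_0 .
\]
I would apply this with $A=\ell_{\bar e}$, $B=L_w$, $\psi=\psi_e$, and $S=\proj_{\ell_{\bar e}}(\ell_{e_1})\cup\proj_{\ell_{\bar e}}(\ell_{e_2})$, which is exactly $\diam(S)=d_{\ell_e}(\ell_{e_1},\ell_{e_2})$ and $\diam(\psi_e(S))=d_{L_w}(L_u,L_v)$. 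Absorbing $\lambda_0$ and $c_0$ into a single constant $\lambda\ge\max\{\lambda_0,c_0\}$, and using that Lemma~\ref{lem:psiquasi} gives these constants uniformly over all edges of $T$, yields
\[
\tfrac1\lambda\, d_{\ell_e}(\ell_{e_1},\ell_{e_2})-\lambda\ \le\ d_{L_w}(L_u,L_v)\ \le\ \lambda\, d_{\ell_e}(\ell_{e_1},\ell_{e_2})+\lambda,
\]
as claimed.

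I do not expect a genuine obstacle here; the only point requiring care is bookkeeping of orientations and base vertices, namely checking that the edge $\bar e=[w,o]$ (not $e=[o,w]$) is the one whose peripheral set $\ell_{\bar e}\subseteq\mathcal H_o$ receives the projections $\proj_{\ell_{\bar e}}(\ell_{e_i})$, and that this is consistent with the definition of $\Pi_{L_w}$ in Definition~\ref{defn:projectionmap} (where the last two edges of $[v',v]$ are labelled $e'=[w_0,u_0]$ and $e=[u_0,v]$ — here $u_0=o$, $v=w$, and $w_0=u$ or $v$). One should also note that $d_{\ell_e}$ in the statement means the projection distance in $\mathcal H_o$ relative to the peripheral set $\ell_{\bar e}$, i.e.\ what was denoted $d_\ell$ just before the lemma, so there is no discrepancy with Remark~\ref{rem:boundeddiameter}. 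Once these identifications are in place, the inequality is immediate from the quasi-isometry property, so the lemma follows "immediately" as the statement advertises.
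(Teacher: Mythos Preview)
Your proposal is correct and is exactly the argument the paper has in mind: the paper's proof is the single sentence ``follows immediately from Lemma~\ref{lem:psiquasi} and the definitions of $d_{\ell_e}$ and $d_{L_{v}}$,'' and you have simply spelled this out by identifying $d_{L_w}(L_u,L_v)=\diam\bigl(\psi(S)\bigr)$ and $d_{\ell_e}(\ell_{e_1},\ell_{e_2})=\diam(S)$ for the appropriate $S$ and quasi-isometry $\psi$. The only wrinkle is that midway through you silently swap the orientation of $e$ (the lemma sets $e=[w,o]$, while you use $e=[o,w]$ and $\bar e=[w,o]$), so the map in play is really $\psi_{\bar e}\colon \ell_e\to L_w$ in the lemma's notation; since you already flag orientation bookkeeping as the delicate point, this is harmless.
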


We are now ready  to verify the projection axioms.

\begin{prop}
\label{prop:Fi}
There exists $\xi >0$ such that for each $i \in \{1,2\}$, $\mathbb{L}_i$ together with the projection maps $\proj_\ell$ satisfies the projection axioms.
\end{prop}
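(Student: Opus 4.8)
The plan is to verify the three projection axioms of Definition~\ref{defn:Projectionaxioms} for $\mathbb{L}_i$ by transporting them from the projection axioms already known (Remark~\ref{rem:boundeddiameter}) for the peripheral collections $\{\ell_f : f^+ = v\}$ inside each relatively hyperbolic space $\mathcal{H}_v$. The bridge between the two settings is the observation that for distinct $v, v', v'' \in \mathcal{V}_i$, the relevant projections $\Pi_{L_v}(L_{v'})$ and $\Pi_{L_v}(L_{v''})$ are computed by first passing to a common intermediate vertex $u$ (the second-to-last vertex on the geodesics from $v'$, respectively $v''$, to $v$), projecting the appropriate $\ell_{e'}$'s inside $\mathcal{H}_u$, and then pushing forward via the quasi-isometry $\psi_e$. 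The key geometric fact, already extracted in Lemma~\ref{lem:easy1}, is that the combinatorics of the Bass-Serre tree collapses the dependence: as long as the various vertices are at tree-distance $\ge 2$ from the relevant geodesics, the second-to-last edges agree, so the $L_v$-side projection distances are controlled (up to the uniform multiplicative/additive error of Lemma~\ref{lem:easy3}) by $\ell$-side projection distances in a single $\mathcal{H}_u$.

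First I would dispose of Axiom~\eqref{axiom1}: this is exactly the first statement of Lemma~\ref{lem:easy1}, giving $\diam(\Pi_{L_v}(L_{v'})) \le \lambda$, so we may take $\xi \ge \lambda$. Next, for Axiom~\eqref{axiom2}, I would take distinct $u, v, w \in \mathcal{V}_i$ with $d_{L_v}(L_u, L_w) > \xi$ and aim to show $d_{L_u}(L_v, L_w) \le \xi$. The first reduction is tree combinatorics: using Lemma~\ref{lem:easy1}, if $v$ is far (tree-distance $\ge 2$) from the geodesic $[u,w]$, then $\Pi_{L_v}(L_u) = \Pi_{L_v}(L_w)$ and the hypothesis $d_{L_v}(L_u,L_w) > \xi$ fails; so $v$ lies within distance $1$ of $[u,w]$, and similarly the interesting configuration forces $u, v, w$ to share a common neighbor $o$ in $\mathcal{V}_{3-i}$ (or a short chain connecting them), putting us in the situation of Lemma~\ref{lem:easy3}. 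Then I would invoke Lemma~\ref{lem:easy3} twice to compare $d_{L_w}(L_u, L_v)$ and $d_{L_u}(L_v, L_w)$ with the $\ell$-side distances $d_{\ell_e}(\ell_{e_1}, \ell_{e_2})$ and $d_{\ell_{e_1}}(\ell_e, \ell_{e_2})$ inside $\mathcal{H}_o$, and apply the known Axiom~\eqref{axiom2} for $\{\ell_f : f^+ = o\}$ with constant $\xi_0$. Choosing $\xi$ large compared to $\lambda$, $\lambda\xi_0 + \lambda$, and the various composed constants closes this case; the configurations where the three vertices do not all hang off a single $o$ are handled by noting that the geodesics $[u,v]$, $[v,w]$, $[u,w]$ fan out from a median vertex and the projections factor through that median, again reducing to a single relatively hyperbolic space.

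Finally, for Axiom~\eqref{axiom3} — finiteness of $\{w \in \mathbb{L}_i : d_{L_w}(L_u, L_v) > \xi\}$ for fixed $u \ne v$ — I would again split according to the position of $w$ relative to $[u,v]$. If $w$ is at tree-distance $\ge 2$ from $[u,v]$, Lemma~\ref{lem:easy1} forces $d_{L_w}(L_u,L_v) = 0 \le \xi$, so every contributing $w$ lies in a uniformly bounded neighborhood of the finite geodesic $[u,v]$ together with the finitely many common neighbors arising there; among those, by Lemma~\ref{lem:easy3} the $L_w$-distance is comparable to an $\ell$-distance inside a fixed $\mathcal{H}_o$, and Axiom~\eqref{axiom3} for that peripheral collection gives finiteness. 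Thus only finitely many $w$ survive. The main obstacle I anticipate is the bookkeeping in Axiom~\eqref{axiom2}: one must carefully enumerate how three vertices of $\mathcal{V}_i$ and their connecting geodesics in $T$ sit relative to one another (the ``common neighbor $o$'' case versus the genuinely tripod case), make sure the projection $\Pi_{L_v}$ really is computed in the single space $\mathcal{H}_u$ predicted by the median, and track how the quasi-isometry constants of $\psi_e$ (Lemma~\ref{lem:psiquasi}) and the coarse Lipschitz constants of $\proj_{\ell}$ accumulate, so that a single uniform $\xi$ works for all configurations and both values of $i$.
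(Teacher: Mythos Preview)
Your proposal is correct and follows essentially the same approach as the paper: Axiom~(1) directly from Lemma~\ref{lem:easy1}, and Axioms~(2) and~(3) by using the tree combinatorics of Lemma~\ref{lem:easy1} to reduce to the common-neighbor configuration, then transferring via Lemma~\ref{lem:easy3} to the known projection axioms for $\{\ell_f : f^+ = o\}$ in $\mathcal{H}_o$ with constant $\xi_0$, and taking $\xi = \lambda\xi_0 + \lambda$. Two small slips worth noting (neither fatal): when $d_T(w,[u,v]) \ge 2$ the coinciding projections give $d_{L_w}(L_u,L_v) \le \lambda$, not $=0$; and the subcase where the middle vertex lies \emph{on} the geodesic is dispatched directly by Lemma~\ref{lem:easy1} (since the other two vertices are then at tree-distance $\ge 2$ from the relevant segment) rather than by ``reducing to a single relatively hyperbolic space'' as you suggest.
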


\begin{proof}
    We  verify the projection axioms for $\mathbb L_1$. The case for $\mathbb L_2$ is identical. The constant $\xi$ will be defined explicitly during the proof.

    \textbf{Axiom 1:}  This follows from Lemma~\ref{lem:easy1}.

\textbf{Axiom 2:} 
   Let $ {u},  {v}, {w}$ be distinct vertices in $ \mathcal V_1$. In the course of the proof, we will compute a constant $\xi > 0$ such that if $d_{L_w}(L_u, L_v ) > \xi$, then $d_{L_u}(L_w, L_v )  \le \xi $. 
   
   Since $d_{L_w}(L_u, L_v ) > 0$, it follows from Lemma~\ref{lem:easy1} that either $w$ lies on $[u,v]$ or $d_T(w, [u,v]) =1$. If $w$ lies on $[u,v]$, then since $u,w,v\in \mathcal V_1$, we have $d_{T}(u, [w,v])\ge 2$ and $d_{T}(v, [u,w])\ge 2$.   Axiom 2  thus follows from Lemma~\ref{lem:easy1}.  


On the other hand, suppose that $d(w,[u,v])=1$. Let $o\in [u,v]$ be adjacent to $w$ and consider the vertices $u',v'\in \Lk(o)\cap [u,v]$ which lie in $[u,o]$ and $[o,v]$, respectively.
If $u\neq u'$, then $d_{L_u}(L_w,L_v)=0$, and so we may assume without loss of generality that $u=u'$. Furthermore,  $\pi_{L_u}(L_v)=\pi_{L_u}(L_{v'})$ by definition. Thus to prove the upper bound on $d_{L_u}(L_w,L_v)$, it suffices to assume that $v=v'$, in which case $u,v,w$ all lie in $\Lk(o)$, where $o\in \mathcal V_2$.

Let $e=[w,o]$,  $e_1=[u,o],$ and $ e_2=[v,o]$.  It follows from Lemma~\ref{lem:easy3} that 
\[
\frac{1}{\lambda} d_{\ell_{e}}(\ell_{e_1}, \ell_{e_2}) -\lambda \le d_{L_{w}} (L_{u}, L_{v})     \le \lambda d_{\ell_{e}}(\ell_{e_1}, \ell_{e_2}) + \lambda.
\] 
 Again applying Lemma~\ref{lem:easy3} with the roles of $u, v, w$ exchanged, we have that 
\[
\frac{1}{\lambda} d_{\ell_{e_1}}(\ell_{e}, \ell_{e_2}) - \lambda \le d_{L_{u}} (L_{w}, L_{v} ) \le \lambda d_{\ell_{e_1}}(\ell_{e}, \ell_{e_2}) + \lambda.
\]
Since $
\{ \ell_{f}   \, |\, f \in E(T) \textrm{ and } f^{+} = o \}
$
satisfies the projection axioms with constant $\xi_0$, it follows that  $d_{\ell_e}(\ell_{e_1}, \ell_{e_2})>\xi_0$ implies that $d_{\ell_{e_1}}(\ell_{e}, \ell_{e_2})\le \xi_0$.  Since there are finitely many choices for $o$ up to the action $G'$, the constant $\xi_0$ may be chosen independently of $o$.
Thus, setting $ \xi = \lambda \xi_{0} + \lambda$,  the above inequalities show that  $d_{L_W}(L_u,L_v)>\xi$ implies that $d_{L_u}(L_w,L_v)\leq \xi$.  This verifies Axiom 2.

\textbf{Axiom 3:}
 For distinct $u, v \in \mathcal V_1$, we will prove  the set 
\[
\{w\in \mathcal V_1 \mid d_{L_w}(L_u, L_v )>\xi\}
\]
is finite. By Lemma~\ref{lem:easy1}, any  such vertex $w$ is either contained in the interior of $[u,v]$ or satisfies $d(w, [u,v])=1$.
The first case yields at most $d(u, v)-1$ choices for $w$.

Suppose $d(w, [u,v])=1$. As in the proof of Axiom~2,   we can assume  that    $u,v,w$ lie in  $\Lk(o)$  for some vertex $o$ in $\mathcal{V}_{2}$.  
Let  $e_1=[u,o], e_2=[v,o]$, and $e = [w,o]$.
By Lemma~\ref{lem:easy3}, we have $
d_{L_{w}} (L_{u}, L_{v}) \le \lambda d_{\ell_{e}}(\ell_{e_1}, \ell_{e_2}) + \lambda$.
Since $\xi =  \lambda \xi_{0} + \lambda$, it follows that 
\[
\{w \in \Lk(o) \, \bigl | \,d_{L_w}(L_u, L_v ) > \xi \} \subset \{w \in \Lk(o) \, \bigl |\, d_{\ell_e}(\ell_{e_1}, \ell_{e_2}) > \xi_0 \}.
\]

The projection axioms for $
\{ \ell_{f} \, |\, f \in E(T) \textrm{ and } f^{+} = v \}
$ imply  that the latter set is finite, and so the former set must also be finite. 
Since there are finitely many possibilities for $o$, this verifies Axiom 3.
\end{proof}

\begin{lem}
For each $i =1,2$, the action of $G = \pi_1(\mathcal{G})$ on the collection $\mathbb L_i= \{ L_v \mid v\in\mathcal V_i\}$ satisfies
\[
\Pi_{g L_v}(g L_u) = g \Pi_{L_v}( L_u)
\]
for any $v\in \mathcal V_i$ and any $g\in G$.
\end{lem}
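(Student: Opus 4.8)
The plan is to trace the definition of the projection maps through the $G$--equivariance of each constituent map and combine. Recall from Definition~\ref{defn:projectionmap} that, writing the last two oriented edges of $[v',v]$ as $e'=[w,u]$ and $e=[u,v]$, we have $\Pi_{L_v}(L_{v'}) = \psi_e(\proj_{\ell_{\bar e}}(\ell_{e'}))$. First I would observe that for any $g \in G$ the geodesic $[gv',gv]$ in the Bass-Serre tree $T$ is $g[v',v]$, so its last two oriented edges are $ge'=[gw,gu]$ and $ge=[gu,gv]$. Hence $\Pi_{gL_v}(gL_{v'}) = \psi_{ge}\bigl(\proj_{\ell_{\overline{ge}}}(\ell_{ge'})\bigr)$, and it remains to compare this with $g\,\psi_e\bigl(\proj_{\ell_{\bar e}}(\ell_{e'})\bigr)$.

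The key steps are the three equivariance facts, each of which is built into the construction recalled in Section~\ref{sec:pre}. First, $\ell_{ge'} = g\ell_{e'}$: by definition $\ell_{f} = i_{f^+}(\tau_f(X_f))$, and $X_{gf} = gX_f$, while the maps $\tau$ and the maps $i_v$ are $G$--equivariant by construction of the Bass-Serre space, so $\ell_{gf}=g\ell_f$ for every edge $f$ of $T$. Second, $\proj_{\ell_{\overline{ge}}}(g\,\cdot\,) = g\,\proj_{\ell_{\bar e}}(\,\cdot\,)$ up to no error at all, or at worst up to a uniformly bounded error, since the coarse closest point projections $\proj_{\ell_f}$ of Lemma~\ref{lem:RHGHRSS22} are equivariant for the isometric $G$--action on $\mathcal H_v$ (coming from $\mathcal H_{gv}=g\mathcal H_v$); combined with $\overline{ge}=g\bar e$ this gives $\proj_{\ell_{\overline{ge}}}(\ell_{ge'}) = g\,\proj_{\ell_{\bar e}}(\ell_{e'})$. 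Third, $\psi_{ge}(g\,\cdot\,) = g\,\psi_e(\,\cdot\,)$: this is immediate from formula~\eqref{eq:psi_e}, since $\psi_{ge}$ is the restriction of $p_{gv}\circ\tau_{ge}\circ\tau_{\overline{ge}}^{-1}\circ i_{gu}^{-1}$, and each of $p_v$, $\tau$, $i_v$ intertwines the $G$--actions on the relevant vertex and edge spaces (as was used, for instance, in the proof of Lemma~\ref{lem:psiquasi} to get $G_{\check e}$--equivariance of $\psi_e$, here upgraded to full $G$--equivariance in the source/target labels). Chaining these three identities yields $\Pi_{gL_v}(gL_{v'}) = g\,\Pi_{L_v}(L_{v'})$, as desired; the case $d_T(v',v)=1$ or $v'=v$ does not arise since projections are only defined for $v'\neq v$ (and the argument is insensitive to whether $d_T(v,v')$ is even, as noted after Definition~\ref{defn:projectionmap}).

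The only mild subtlety, and the place I would be most careful, is that $\proj_{\ell_e}$ is only a \emph{coarse} closest point projection, so a priori the middle identity holds only up to bounded error rather than on the nose; however, the conclusion we want, $\Pi_{gL_v}(gL_u) = g\Pi_{L_v}(L_u)$, is an equality of subsets and this is exactly what equivariance of the set-valued projection gives — the coarse Lipschitz constants play no role at the level of the defining formula, only in later quantitative estimates. If one prefers to work with honest maps, one can instead note that all three building blocks $\psi_e$, $\proj_{\ell_e}$, $i_v$, $\tau_e$ are defined $G$--equivariantly in \cite{HRSS22} (the action of $G$ on $X$, on each $\mathcal H_v$, and on the projection complex data all being by isometries compatible with the edge and vertex space identifications), so the composite is strictly equivariant. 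Either way the proof is a short unwinding of definitions; I do not anticipate a genuine obstacle, only the bookkeeping of which edge maps to which under $g$.
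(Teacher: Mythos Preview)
Your proposal is correct and follows essentially the same approach as the paper's proof, which simply states that the result follows immediately from the definition of $\Pi$ together with the $G$--equivariance of $\proj$ and $\psi$ (namely $\proj_{\overline{ge}}(\ell_{gf})=g\cdot \proj_{\bar e}(\ell_f)$ and $\psi_{ge}(gx)=g\psi_e(x)$). Your version is more detailed in tracking the last two edges of $g[v',v]$ and in addressing the coarseness of $\proj$, but the underlying argument is identical.
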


\begin{proof}
This follows immediately from the definition of $\Pi$ and the fact that the maps $\proj$ and $\psi$ are $G$--equivariant in the sense that $\proj_{\overline{ge}}(\ell_{gf})=g\cdot \proj_{\bar e}(\ell_f)$ and $\psi_{ge}(gx)=g\psi_e(x)$.
\end{proof}

We are now ready to prove Theorem~\ref{thm1}.

\begin{proof}[Proof of Theorem~\ref{thm1}]
Let $G'$ be the finite index subgroup of $G$ given by Lemma~\ref{lem:index2subgroup}, which is also a Croke-Kleiner admissible group. Without loss of generality, we replace $G$ by $G'$ for the rest of the proof.

By Proposition~\ref{prop:Fi},  the collection of  quasi-lines $\mathbb L_i= \{ L_v \mid v\in\mathcal V_i\}$  satisfies the projection axioms with a constant $\xi$ for $i=1,2$.  Fix $K>4\xi$. The unbounded quasi-trees of metric spaces $\mathcal C_{K} (\mathbb L_1)$ and $\mathcal C_{K} (\mathbb L_2)$ are themselves quasi-trees, and they admit 
unbounded isometric actions  $G \curvearrowright \mathcal C_{K} (\mathbb L_1)$ and $G \curvearrowright \mathcal C_{K}(\mathbb L_2)$.

Since the underlying graph of $G$ is bipartite, we can choose an edge $\check{e}$ in $\Gamma$ which is not a loop. Choosing the orientation of $\check{e}$ correctly, $\mu = \check{e}^{-}$ and $\omega = \check{e}^{+}$ have lifts in $T$ belonging to $\mathcal{V}_1$ and $\mathcal{V}_2$ respectively.
By  construction,  elements of $Z_{\mu}$ and $Z_{\omega}$ are loxodromic and elliptic in the action on $\mathcal{C}_{K}(\mathbb{L}_1)$, respectively, and elliptic and loxodromic in the action on $\mathcal{C}_{K}(\mathbb{L}_2)$, respectively. By Lemma~\ref{lem:key}, we conclude that the group $G$ is $\mathcal{H}$--inaccessible. 
\end{proof}

Every graph manifold has a finite cover that is a graph manifold $N$ containing at least two Seifert fibered spaces such that each Seifert fibered piece has orientable, hyperbolic base orbifold. We call such a graph manifold  {\it non-elementary} in Section~\ref{sec:NPC}. Since $\pi_1(N)$ is a Croke-Kleiner admissible group, the following corollary is immediate from Theorem~\ref{thm1}.

\begin{cor}
\label{cor:graphmanifoldfi}
Every graph manifold has a finite cover whose fundamental group is $\mathcal{H}$--inaccessible.
\end{cor}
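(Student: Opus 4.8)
The plan is a short covering-space reduction to Theorem~\ref{thm1}. The first step is to replace the given graph manifold $M$ by a well-chosen finite cover $N$. Using the structure theory of graph manifolds discussed in Section~\ref{sec:NPC}, I would produce a finite cover $N \to M$ that is again a graph manifold and is \emph{non-elementary}: its JSJ decomposition has at least two Seifert fibered pieces and every Seifert piece has orientable, hyperbolic base orbifold. Concretely, one first passes to a finite cyclic cover dual to a loop edge of the JSJ graph, if necessary, so that the cover has at least two JSJ vertices; then one takes a further finite cover to make every base orbifold orientable (an orientation double cover of the bases) and hyperbolic (unwrapping the small-genus or few-cone-point bases). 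A composition of finitely many finite covers is a finite cover, so $N \to M$ is finite.

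The second step is to record that $G \vcentcolon= \pi_1(N)$ is a Croke-Kleiner admissible group, i.e.\ that the JSJ graph of groups of $N$ satisfies Definition~\ref{defn:admissible}; this is essentially the motivating example of \cite{CK02}. The JSJ graph is finite with at least one edge; each edge group is the fundamental group $\Z^2$ of a JSJ torus; for each Seifert piece the Seifert fibration gives a central extension $1 \to \Z \to G_\mu \to H_\mu \to 1$ with $Z(G_\mu) = \Z$ the fiber subgroup and $H_\mu$ the orbifold fundamental group of an orientable hyperbolic $2$--orbifold, hence a non-elementary hyperbolic group, which is condition~(2). Condition~(4) holds because the two fiber directions of the Seifert pieces adjacent to a JSJ torus span a finite-index subgroup of that $\Z^2$, which is precisely the non-matching-of-fibers property that makes the torus part of the JSJ decomposition; condition~(3) holds because distinct boundary slopes of a Seifert piece with hyperbolic base orbifold are pairwise non-commensurable in $G_\mu$, and a boundary $\Z^2$ is self-commensurable only by its own elements.

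Finally, I would apply Theorem~\ref{thm1} to $G = \pi_1(N)$: there is a subgroup $H \le G$ of index at most $2$ that is $\mathcal H$--inaccessible. Under the Galois correspondence between subgroups of $\pi_1(N)$ and covers of $N$, the subgroup $H$ corresponds to a finite cover $N' \to N$ of degree at most $2$ with $\pi_1(N') \cong H$, and the composition $N' \to N \to M$ exhibits $N'$ as a finite cover of $M$ whose fundamental group is $\mathcal H$--inaccessible, as desired.

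I do not expect a genuine obstacle here: the only points needing care are the bookkeeping in the first step---arranging simultaneously that the cover remains a graph manifold, has at least two Seifert fibered pieces, and has all base orbifolds orientable and hyperbolic---and the routine verification of the admissibility axioms in the second step, both of which are standard. It is worth noting that the hypothesis of at least two Seifert pieces is not cosmetic: the proof of Theorem~\ref{thm1} selects a non-loop edge of the underlying bipartite graph, which requires the graph to have at least two vertices, and the one-vertex case is exactly the open question raised at the end of the introduction.
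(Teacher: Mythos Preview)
Your proposal is correct and follows essentially the same route as the paper: pass to a finite cover that is a non-elementary graph manifold (at least two Seifert pieces, each with orientable hyperbolic base), observe that its fundamental group is Croke--Kleiner admissible, and then apply Theorem~\ref{thm1} to obtain the required index-$\le 2$ further cover. The paper states this in a single sentence just before the corollary, while you have spelled out the admissibility verification and the covering-space translation in more detail, but the argument is the same.
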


It is still unknown whether $\mathcal{H}$--inaccessibility of a finite index normal subgroup of $G$ passes to $\mathcal{H}$--inaccessibility of the ambient group $G$. Thus it is  natural to ask whether the ``finite cover'' condition  in Corollary~\ref{cor:graphmanifoldfi} can be removed. We will address this question in the following section.

\section{$\mathcal{H}$--accessibility of 3-manifold groups}
\label{sec:NPC}

The goal of this section is to prove Theorem~\ref{thm:NPCmld}, which gives conditions under which the fundamental group of a non-geometric 3--manifold is $\mc H$--inaccessible.

We begin by recalling some definitions and facts about 3-manifolds.  Let $M$ be a compact, connected, orientable, irreducible 3-manifold with empty or toroidal boundary.  By the geometrization theorem for 3-manifolds of Perelman (\cite{Perel1}, \cite{Perel2}, \cite{Perel3}) and Thurston, either
\begin{enumerate}
    \item the manifold $M$ is \emph{geometric}, in the sense that its interior admits one of the following geometries: $S^3$, $\mathbb{E}^3$, $\mathbb{H}^3$, $S^{2} \times \mathbb{R}$, $\mathbb{H}^{2} \times \mathbb{R}$, $\widetilde{SL(2, \mathbb{R})}$, Nil, and Sol; or 

    \item the manifold $M$ is {\it non-geometric}. In this case, the  torus decomposition  of 3-manifolds yields a nonempty minimal union $\mathcal{T} \subset M$ of disjoint essential tori, unique up to isotopy, such that each component of $M \backslash \mathcal{T}$ is either a Seifert fibered piece or a hyperbolic piece. 
\end{enumerate}

We refer the reader to \cite{Scott} for background on geometric structures on 3-manifolds. A Seifert fibered piece is called \emph{non-elementary} if its base orbifold is orientable and hyperbolic, and it is called \emph{isolated} if it is not glued to any other Seifert fibered piece.

The manifold $M$ is called a \emph{graph manifold} if all the pieces of $M \backslash \mathcal{T}$ are Seifert fibered. A graph manifold is \emph{non-elementary} if it contains at least two pieces and all pieces are non-elementary. In other words, a non-elementary graph manifold is obtained by gluing at least two and at most finitely many non-elementary Seifert fibered manifolds, where the gluing maps between the Seifert components do not identify (unoriented)
Seifert fibers up to homotopy.

We will call a non-geometric manifold $M$ a \emph{mixed manifold} if it is not a graph manifold.  If there is a subcollection $\mc T'$ of $\mc T$ and a connected component of $M \backslash \mc T'$ that is a graph manifold, then this connected component is called a \emph{graph manifold component} of the mixed manifold $M$.  A graph manifold component is \textit{maximal} if it is not properly contained in another graph manifold component.  A mixed manifold is \textit{non-elementary} if all maximal graph manifold components and Seifert fibered pieces are non-elementary.

\begin{rem}
 Every graph (respectively, mixed) manifold  is finitely covered by a non-elementary graph (respectively, mixed) manifold (see, for example,  \cite[Lemma~3.1]{PW14}, \cite[Lemma~2.1]{KL98}).   
\end{rem}

Our starting point for proving Theorem~\ref{thm:NPCmld} is the following lemma, which describes when  $\pi_1(M)$ is relatively hyperbolic.
 
\begin{lem}[{\cite{Dah03, BW13}}]\label{lem:RHmfld}
Let $M_1, \dots, M_k$ be the maximal graph manifold components and Seifert fibered pieces of the torus decomposition of $M$. Let $S_1, \dots, S_{\ell}$ be the tori in the boundary of $M$ that bound a hyperbolic piece, and let $T_1, \dots,T_m$ be the tori in the torus decomposition of $M$ that separate two hyperbolic components. Then $\pi_1(M)$ is hyperbolic relative to 
\[
\mathbb{P} = \{\pi_1(M_p)\}_{p=1}^k \cup \{\pi_1(S_q)\}_{q=1}^\ell \cup \{\pi_1(T_r)\}_{r=1}^m.
\]
\end{lem}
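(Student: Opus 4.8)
The statement is Lemma~\ref{lem:RHmfld}, a relative hyperbolicity result for non-geometric 3--manifold groups, and I would prove it essentially by assembling known theorems rather than building anything from scratch. The plan is to combine two results: first, that the fundamental group of a finite-volume cusped hyperbolic 3--manifold is hyperbolic relative to its cusp subgroups (each a $\Z^2$), which goes back to Farb and is established in the form we need by the work cited; and second, the combination theorem for relatively hyperbolic groups acting on trees, which is exactly what Dahmani \cite{Dah03} and Bigdely--Wise \cite{BW13} provide. The torus decomposition of $M$ expresses $\pi_1(M)$ as the fundamental group of a graph of groups whose vertex groups are the $\pi_1$ of the pieces and whose edge groups are the $\Z^2$'s coming from the decomposing tori.

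First I would set up the graph-of-groups picture: let $\mathcal T$ be the minimal collection of decomposing tori, so $\pi_1(M) = \pi_1(\mathcal X)$ where $\mathcal X$ is the graph of groups dual to $\mathcal T$, with vertex groups the $\pi_1$ of the components of $M\setminus\mathcal T$ and edge groups the $\Z^2$'s. Next, I would enlarge this decomposition: rather than cutting along every torus of $\mathcal T$, cut only along the tori $T_1,\dots,T_m$ that separate two hyperbolic pieces, together with enough of $\mathcal T$ to isolate each Seifert fibered piece that is not adjacent to another Seifert piece. This produces a coarser graph-of-groups decomposition whose vertex groups are exactly the $\pi_1(M_p)$ (maximal graph manifold components and Seifert pieces) and the $\pi_1$ of the hyperbolic pieces, and whose edge groups are the $\Z^2$'s corresponding to the $T_r$ together with the tori interior to maximal graph manifold components that got absorbed into the $M_p$'s — the point being that we only keep as edges the tori we actually want to be "honest" edges in the final picture.

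Then I would invoke the input facts: each hyperbolic piece $N$ has $\pi_1(N)$ hyperbolic relative to the peripheral $\Z^2$'s corresponding to its boundary tori (the $S_q$'s that lie in $\partial M$ and the $T_r$'s and absorbed-graph-manifold tori that bound it on the hyperbolic side); the Seifert and graph-manifold vertex groups $\pi_1(M_p)$ we treat as (trivially) hyperbolic relative to the whole of themselves, and their boundary $\Z^2$'s are peripheral there by inclusion. Each edge group $\Z^2$ is then, on each side, a subgroup of a peripheral subgroup. The combination theorem of Dahmani (or Bigdely--Wise, for graphs of groups with this ``edge groups sit in peripherals'' structure) then says $\pi_1(M)$ is hyperbolic relative to the collection obtained by taking: the non-peripheral vertex groups as new peripherals (these are the $\pi_1(M_p)$), plus the peripheral subgroups of the hyperbolic vertex groups that were \emph{not} used as edge groups — which are precisely the $\pi_1(S_q)$ (boundary tori of $M$ on the hyperbolic side) — while the $\pi_1(T_r)$ separating two hyperbolic pieces survive as peripherals because they are edge groups contained in peripherals on \emph{both} sides. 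This yields exactly $\mathbb P = \{\pi_1(M_p)\} \cup \{\pi_1(S_q)\} \cup \{\pi_1(T_r)\}$.

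\textbf{Main obstacle.} The delicate point is the bookkeeping of which tori survive as peripheral subgroups in the final list and which get ``consumed.'' A torus $T_r$ between two hyperbolic pieces is an edge group whose image on each side lies inside a genuine peripheral $\Z^2$ (in fact equals it), and the combination theorem keeps such edge groups as peripheral; a torus between a hyperbolic piece and a Seifert/graph-manifold vertex is an edge whose image on the Seifert side is \emph{not} inside a peripheral (the Seifert vertex group is its own peripheral, so this is fine, it just means that torus is absorbed into $\pi_1(M_p)$ and does not appear separately); and a torus interior to a maximal graph manifold component is not an edge of our coarsened decomposition at all. Verifying that the hypotheses of Dahmani's or Bigdely--Wise's combination theorem are met — in particular the almost-malnormality/quasiconvexity conditions on the edge groups inside the peripheral structures, which hold here because the $\Z^2$ cusp subgroups of a hyperbolic piece are malnormal and the decomposing tori are essential — is where the real content lies, but all of this is exactly what is packaged in \cite{Dah03, BW13}, so the proof is a citation assembled with this case analysis.
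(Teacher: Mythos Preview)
The paper does not give a proof of this lemma; it is stated with attribution to \cite{Dah03, BW13} and used as a black box. Your proposal correctly reconstructs the argument those references package: coarsen the torus decomposition so that maximal graph manifold components become single vertices, equip each hyperbolic vertex group with its cusp peripheral structure and each Seifert/graph-manifold vertex group with the trivial (self-)peripheral structure, and apply the Dahmani/Bigdely--Wise combination theorem. Your bookkeeping of which tori survive as separate peripherals (the $S_q$ and $T_r$) versus which are absorbed into the $\pi_1(M_p)$ is the right case analysis, and the malnormality of cusp subgroups in hyperbolic pieces supplies the needed hypothesis on edge groups. So your sketch is a faithful expansion of what the paper intends by the citation.
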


This relatively hyperbolic structure on $\pi_1(M)$ is useful because of the following result, which gives a criterion for relatively hyperbolic groups to be $\mc H$--inaccessible.

\begin{lem}
\label{lem:RHG}
Let $(G, \mathbb{P})$ be a relatively hyperbolic group. If there is a peripheral subgroup $P \in \mathbb{P}$ that satisfies the hypotheses of Lemma~\ref{lem:key}, then $G$ is $\mathcal{H}$--inaccessible.
\end{lem}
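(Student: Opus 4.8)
The plan is to leverage the relatively hyperbolic structure on $G$ to promote the hyperbolic actions on the peripheral subgroup $P$ witnessing Lemma~\ref{lem:key} to hyperbolic actions on all of $G$, while preserving the loxodromic/elliptic behavior of the relevant commuting elements. Concretely, suppose $a, b \in P$ are the commuting elements and $P \curvearrowright X_P$, $P \curvearrowright Y_P$ are the hyperbolic actions from Lemma~\ref{lem:key}, so that $a$ is loxodromic and $b$ elliptic on $X_P$, and $b$ is loxodromic on $Y_P$. I would first recall the standard machinery for inducing actions of a relatively hyperbolic group from actions of its peripheral subgroups: given $(G,\mathbb P)$ relatively hyperbolic and an action of $P \in \mathbb{P}$ on a hyperbolic space, one can form a "cusped" or "cone-off" space on which $G$ acts, such that the action of $P$ on the embedded copy of the peripheral coset is (coarsely) the given action, and elements of $P$ retain their loxodromic/elliptic type. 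A clean reference is the construction in work of Abbott--Balasubramanya--Osin or of Osin on combinations, or the $\mathrm{Cay}(G, \mathcal{X} \sqcup \mathcal{H})$ picture from Osin's relative hyperbolicity framework where one can replace the peripheral generating set of $P$ by a chosen action; alternatively one can cite the explicit statement that if $P$ acts hyperbolically with $p \in P$ loxodromic, then $G$ has a hyperbolic action in which $p$ is still loxodromic, and elements elliptic in the $P$-action are elliptic in the $G$-action.

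With that tool in hand the key steps are: (1) produce a hyperbolic action $G \curvearrowright X$ extending $P \curvearrowright X_P$ so that $a$ acts loxodromically on $X$ and $b$ acts elliptically on $X$; (2) produce a hyperbolic action $G \curvearrowright Y$ extending $P \curvearrowright Y_P$ so that $b$ acts loxodromically on $Y$; (3) observe $a, b$ still commute in $G$ since they did in $P \le G$; (4) apply Lemma~\ref{lem:key} to the group $G$ with these data to conclude that there is no hyperbolic action $G \curvearrowright Z$ dominating both $G \curvearrowright X$ and $G \curvearrowright Y$, hence $\mathcal{H}(G)$ has no largest element, i.e. $G$ is $\mathcal{H}$--inaccessible. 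Step (4) is purely formal once (1)--(3) are established, because Lemma~\ref{lem:key} is stated for an arbitrary group and arbitrary hyperbolic actions.

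The main obstacle is step (1): ensuring that $b$ acts \emph{elliptically} (not merely non-loxodromically, but with bounded orbits) on the induced space $X$. The induced/cusped space is built from $X_P$ together with the Cayley-type graph structure coming from the relatively hyperbolic structure, and an element that is elliptic on the peripheral factor could a priori become loxodromic or parabolic on the larger space if it has unbounded orbits transverse to the peripheral coset. However, $b \in P$ and the peripheral cosets are quasi-convex (indeed, their inclusion is a quasi-isometric embedding away from the cone-points in the appropriate model), so the orbit of a point under $b$ stays within bounded distance of the single coset $P \cdot (\text{basepoint})$, on which $b$ acts with bounded orbits by hypothesis; hence $b$ is elliptic on $X$. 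I would make this precise either by directly citing a lemma of the form "peripheral-elliptic implies elliptic in the induced action" from the relative hyperbolicity literature, or by a short distance estimate using the fact that the inclusion $P \hookrightarrow G$ is a quasi-isometric embedding with respect to the relative metric and that the cone-off adds only uniformly bounded-diameter pieces over each coset.

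One should also double-check the symmetric concern for step (2) — that $b$ is genuinely loxodromic (positive translation length) on $Y$ — which follows similarly since loxodromicity on the peripheral factor is inherited: the induced space contains a quasi-isometrically embedded copy of $Y_P$ on which $b$ has a quasi-geodesic axis, and that axis remains quasi-geodesic in $Y$. With all three ingredients assembled, the conclusion is immediate from Lemma~\ref{lem:key}.
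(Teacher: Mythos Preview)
Your proposal is correct and follows essentially the same approach as the paper. The paper invokes the induced-action machinery of Abbott--Hume--Osin \cite[Corollary~4.11(a)]{AHO}: assigning the trivial action to every $Q\in\mathbb P\setminus\{P\}$ and the given actions to $P$, one obtains hyperbolic $G$--spaces $Z_X,Z_Y$ together with coarsely $P$--equivariant \emph{quasi-isometric embeddings} $X\to Z_X$ and $Y\to Z_Y$; this single citation simultaneously handles your steps (1) and (2), so your worry about preserving the ellipticity of $b$ is dispatched immediately without a separate argument.
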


Before proving the lemma, we state an immediate corollary, which gives a different proof of \cite[Theorem~6.2]{AR19}.

\begin{cor}[{\cite[Theorem~6.2]{AR19}}]
\label{cor:hyp}
The fundamental group of a finite-volume cusped hyperbolic 3-manifold is  $\mathcal{H}$--inaccessible.
\end{cor}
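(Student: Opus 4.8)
The plan is to apply Lemma~\ref{lem:RHG}: it suffices to exhibit a peripheral subgroup of $\pi_1(M)$ that satisfies the hypotheses of Lemma~\ref{lem:key}. So let $M$ be a finite-volume cusped hyperbolic $3$--manifold and set $G = \pi_1(M)$. Since $G$ is a non-uniform lattice in $\PSL_2(\mathbb C)$, it contains parabolic elements and is hyperbolic relative to the finite collection $\mathbb P$ of its maximal parabolic (cusp) subgroups; this is classical (see, e.g., \cite{Dah03}). Because each cusp cross-section of $M$ is a torus, every member of $\mathbb P$ is isomorphic to $\Z^2$.

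Next I would fix one peripheral subgroup $P \in \mathbb P$, so $P \cong \Z^2$, and choose a free basis $\{a, b\}$ of $P$. Since $P$ is abelian, $a$ and $b$ commute, which supplies the commuting pair demanded by Lemma~\ref{lem:key}. To produce the two hyperbolic actions, let $\phi_1 \colon P \to \R$ be the homomorphism with $\phi_1(a) = 1$ and $\phi_1(b) = 0$, and let $P$ act on the line $X = \R$ by $p \cdot x = x + \phi_1(p)$. This is a cobounded action on a hyperbolic metric space in which $a$ acts loxodromically, with translation length $1$, while $b$ acts as the identity and hence elliptically. Symmetrically, let $\phi_2 \colon P \to \R$ be the homomorphism with $\phi_2(a) = 0$ and $\phi_2(b) = 1$, and let $P$ act on $Y = \R$ by $p \cdot x = x + \phi_2(p)$; then $b$ acts loxodromically on $Y$. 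Thus the commuting elements $a, b \in P$ together with the hyperbolic actions $P \curvearrowright X$ and $P \curvearrowright Y$ satisfy the hypotheses of Lemma~\ref{lem:key}.

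Finally, since the peripheral subgroup $P \in \mathbb P$ satisfies the hypotheses of Lemma~\ref{lem:key}, Lemma~\ref{lem:RHG} lets me conclude that $G = \pi_1(M)$ is $\mc H$--inaccessible. I do not anticipate a genuine obstacle here: essentially all of the content is packaged into Lemma~\ref{lem:RHG}, and a peripheral $\Z^2$ is about the simplest conceivable witness for Lemma~\ref{lem:key}. The only step deserving a word of justification is the relatively hyperbolic structure on $\pi_1(M)$ in which the peripheral subgroups are the cusp subgroups, which is a standard fact about finite-volume cusped hyperbolic $3$--manifolds.
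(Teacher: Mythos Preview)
Your proposal is correct and matches the paper's approach exactly: the paper states this corollary as immediate from Lemma~\ref{lem:RHG}, and your argument---that the cusp subgroups give a relatively hyperbolic structure with peripheral $\Z^2$'s, and that $\Z^2$ satisfies Lemma~\ref{lem:key} via the two coordinate projections to $\R$---is precisely the justification the paper has in mind (cf.\ the identical remark about $\Z^2$ in the proof of Theorem~\ref{thm:NPCmld}).
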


We now turn to the proof of Lemma~\ref{lem:RHG}.

\begin{proof}[Proof of Lemma~\ref{lem:RHG}]
To see that $\mathcal{H}(G)$ does not contain a largest element, we will construct two actions of $G$ on hyperbolic spaces  with commuting elements $a, b \in G$ that satisfy the hypotheses of Lemma~\ref{lem:key}.  To do this, we will apply the machinery of \emph{induced actions} from \cite{AHO}.  

Since $P$ satisfies the hypotheses of Lemma~\ref{lem:key} by assumption, there are commuting elements $a,b\in P$ and isometric actions $P \curvearrowright X$ and $P \curvearrowright Y$ on  hyperbolic spaces  such that $a$ and $b$  act loxodromically and elliptically, respectively, in the action $P \curvearrowright X$, and $b$ acts loxodromically  in the action $P \curvearrowright Y$.  For all $Q\in \mathbb P\setminus \{P\}$, fix the trivial action of $Q$ on a point.  By \cite[Corollary~4.11(a)]{AHO}, there exist hyperbolic spaces $Z_X,Z_Y$ on which $G$ acts by isometries,  associated to the collection of actions $\{Q\curvearrowright * \mid Q\in \mathbb P\setminus \{P\}\}\cup \{P\curvearrowright X\}$ and the collection of actions $\{Q\curvearrowright * \mid Q\in \mathbb P\setminus \{P\}\}\cup \{P\curvearrowright Y\}$, respectively.  Moreover, there are coarsely $P$--equivariant quasi-isometric embeddings $X\to Z_X$ and $Y\to Z_Y$.  Therefore, $a$ acts loxodromically and $b$ acts elliptically in the action $G\curvearrowright Z_X$, while $b$ acts loxodromically in the action $G\curvearrowright Z_Y$. This completes the proof.
\end{proof}

In light of Lemmas~\ref{lem:RHmfld} and \ref{lem:RHG}, to prove the $\mc H$--inaccessibility of $\pi_1(M)$, it suffices to understand its peripheral subgroups. In Section~\ref{subsec:SF}, we analyze the fundamental groups of the Seifert fibered pieces.  The more difficult subgroups to understand are the fundamental groups of the maximal graph manifold components. We consider these in Section~\ref{sec:NPCgraph} and give conditions under which they satisfy Lemma~\ref{lem:key}; see Proposition \ref{prop:NPCgraph}.  In Section~\ref{sec:mixedmanifolds}, we put these results together and prove Theorem~\ref{thm:NPCmld}.  Up to this point, we have been assuming that  $M$ has empty or toroidal boundary. Finally, in Section~\ref{sec:Boundary}, we consider $3$-manifolds with higher genus boundary components.

\subsection{Seifert fibered manifolds}
\label{subsec:SF}

In this section, we  analyze Seifert fibered pieces.

\begin{lem}
\label{lem:centralextension}
Let $1 \to \mathbb{Z} \xrightarrow{i} G \xrightarrow{\pi} H \to 1$ be a short exact sequence where $\Z$ is central in $G$ and $H$ is a non-elementary hyperbolic group. Then $G$ is $\mathcal{H}$--inaccessible.
\end{lem}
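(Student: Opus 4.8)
The plan is to verify the hypotheses of Lemma~\ref{lem:key}, with $b$ a generator of the central subgroup $i(\mathbb Z)\le G$ and $a$ a lift of any infinite-order element $\bar a\in H$. Since $b$ is central it commutes with $a$, so all of the content lies in producing the two hyperbolic actions.

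For the first action, take $X$ to be a Cayley graph of $H$ with respect to a finite generating set, and let $G$ act on $X$ through the quotient map $\pi\colon G\to H$; this is a cobounded action on a hyperbolic space. Then $b\in\ker\pi$ acts trivially, hence elliptically, while $\bar a=\pi(a)$ acts loxodromically, since every infinite-order element of a hyperbolic group acts loxodromically on its Cayley graph, and such $\bar a$ exists because $H$ is non-elementary (in particular infinite). This establishes condition (1) of Lemma~\ref{lem:key}.

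For the second action we must arrange for the central element $b$ to act loxodromically on a hyperbolic space; the plan is to build a nonzero homogeneous quasimorphism $q\colon G\to\mathbb R$ with $q(b)\ne 0$ and then apply Proposition~\ref{prop2} to obtain an action of $G$ on a quasi-line $Y$ in which $b$ is loxodromic. To build $q$, fix a set-theoretic section $s\colon H\to G$ with $s(1_H)=1_G$ and write each $g\in G$ uniquely as $g=i(n(g))\,s(\pi(g))$; centrality gives $n(g_1g_2)=n(g_1)+n(g_2)+c(\pi(g_1),\pi(g_2))$, where $c\colon H\times H\to\mathbb Z$ is the $2$--cocycle classifying the extension. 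Since $H$ is hyperbolic, the comparison map $H^2_b(H;\mathbb R)\to H^2(H;\mathbb R)$ is surjective (Mineyev), so the image of $[c]$ in $H^2(H;\mathbb R)$ is bounded: there are a bounded $2$--cochain $\beta$ and a $1$--cochain $u\colon H\to\mathbb R$ with $c=\beta+\delta u$. Setting $q(g)\vcentcolon= n(g)+u(\pi(g))$, a short computation gives $q(g_1g_2)-q(g_1)-q(g_2)=\beta(\pi(g_1),\pi(g_2))$, which is uniformly bounded, so $q$ is a quasimorphism; moreover $q(b^k)=k+u(1_H)$, so its homogenization $\bar q$ satisfies $\bar q(b)=1\ne 0$. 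Applying Proposition~\ref{prop2} to $\bar q$ yields the action $G\curvearrowright Y$ with $b$ loxodromic, which is condition (2). Lemma~\ref{lem:key} then forbids any hyperbolic action of $G$ dominating both $X$ and $Y$, so $\mathcal H(G)$ has no largest element and $G$ is $\mathcal H$--inaccessible.

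\textbf{The main obstacle} is precisely the bounded-cohomology input, equivalently the existence of a homogeneous quasimorphism on $G$ that is nonzero on the center; this is where hyperbolicity of $H$ is used essentially, since for a distorted central $\mathbb Z$ (as in the integral Heisenberg group, a central extension of $\mathbb Z^2$) no such quasimorphism exists and the argument fails. One could alternatively produce $Y$ via Proposition~\ref{prop1} by realizing $b$ as a WWPD${}^+$ element of a suitable hyperbolic action of $G$, but because $b$ is central this reduces to the same fact about the Euler class, so the direct quasimorphism argument seems cleanest. The remaining ingredients — the choice of $X$, the commuting pair $a,b$, and the final invocation of Lemma~\ref{lem:key} — are routine.
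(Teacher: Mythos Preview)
Your proposal is correct and follows essentially the same route as the paper: one action comes from the quotient $G\to H$ acting on a Cayley graph of $H$, and the other comes from a homogeneous quasimorphism on $G$ that is nonzero on the center, built via Mineyev's bounded cohomology result for hyperbolic groups, then turned into an action on a quasi-line via Proposition~\ref{prop2}. The only cosmetic differences are that the paper swaps your labels of $a$ and $b$, and that where you write out the cocycle computation explicitly, the paper packages it as a citation to \cite[Lemma~4.1]{HRSS22}; the underlying argument is identical.
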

\begin{proof}
Choose a finite generating set $J$ of $H$ and consider the hyperbolic action $G \curvearrowright \Cay(H, J)$. Let $a$ be a generator of the group $\Z$, and let $b$ be an element of $G$ such that $\pi(b)$ is loxodromic in $H \curvearrowright \Cay(H, J)$. The element $b$ is thus loxodromic in the action $G \curvearrowright \Cay(H, J)$, as well, while the element $a$ is elliptic (in fact, trivial) in this  action.

    Since every integral cohomology class of a hyperbolic group is bounded (see \cite{Min01}), the central extension $\Z \to G \to H$ corresponds to a bounded element of $H^{2}(H, \Z)$. Hence \cite[Lemma~4.1]{HRSS22} provides a quasi-morphism $\phi \colon G \to \Z$ which is unbounded on $i(\Z)$. By \cite[Lemma~4.15]{ABO19}, there exists a generating set $S$ for $G$ such that $L\vcentcolon = \Cay(G, S)$ is a quasi-line and the inclusion $\Z \to L$ induced by $i$ is a $\Z$--equivariant quasi-isometry. We thus obtain a hyperbolic action $G \curvearrowright L$ for which $a$ is loxodromic. Since $a \in Z(G)$, the elements $a$ and $b$ commute. By Lemma~\ref{lem:key}, $G$ is $\mathcal{H}$--inaccessible.
\end{proof}

\begin{cor}
\label{cor:SF}
Let $M$ be a non-elementary Seifert fibered manifold. Then $\pi_1(M)$ is $\mathcal{H}$--inaccessible.
\end{cor}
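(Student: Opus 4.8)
\textbf{Proof proposal for Corollary~\ref{cor:SF}.}
The plan is to show that the fundamental group of a non-elementary Seifert fibered manifold fits into a central extension of the type handled by Lemma~\ref{lem:centralextension}, and then simply invoke that lemma. First I would recall the standard structure theory of Seifert fibered spaces: if $M$ is Seifert fibered with base orbifold $\mathcal O$, then there is a short exact sequence
\begin{equation*}
1 \to \langle f \rangle \to \pi_1(M) \to \pi_1^{orb}(\mathcal O) \to 1,
\end{equation*}
where $f$ is the class of a regular fiber. When $M$ is non-elementary, $\mathcal O$ is by definition orientable and hyperbolic, so $\pi_1^{orb}(\mathcal O)$ is a (virtually) surface group of negative Euler characteristic, in particular a non-elementary hyperbolic group. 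The subtlety is that $\langle f \rangle$ need not be infinite cyclic in complete generality, and that one must verify it is actually \emph{central}: for orientable $M$ with orientable base orbifold, the action of $\pi_1^{orb}(\mathcal O)$ on the fiber is trivial, so $\langle f\rangle$ is central; and since the base orbifold is hyperbolic (hence $\pi_1(M)$ is infinite and $M$ is aspherical with infinite $\pi_1$), the fiber subgroup is infinite, i.e.\ $\langle f \rangle \cong \Z$. Thus the hypotheses of Lemma~\ref{lem:centralextension} are met with this sequence, and $\pi_1(M)$ is $\mathcal H$--inaccessible.

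Concretely, the steps in order are: (1) produce the fiber central extension $1 \to \Z \to \pi_1(M) \to \pi_1^{orb}(\mathcal O) \to 1$ from the Seifert structure, citing the standard reference (e.g.\ Scott's survey \cite{Scott}); (2) observe that orientability of $M$ together with orientability of $\mathcal O$ forces the monodromy on the fiber to be trivial, so the kernel is genuinely central; (3) observe that hyperbolicity of $\mathcal O$ makes $\pi_1^{orb}(\mathcal O)$ a non-elementary hyperbolic group (it is the fundamental group of a closed or bounded $2$--orbifold with $\chi < 0$, hence virtually free or a virtual surface group, and in either case non-elementary hyperbolic); (4) apply Lemma~\ref{lem:centralextension}.

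I expect the only real point requiring care — the ``main obstacle,'' though a mild one — is justifying that the fiber subgroup is central and infinite cyclic rather than merely normal and cyclic: one needs the orientability hypotheses (built into ``non-elementary'') to rule out the orientation-reversing monodromy that would make $f$ only virtually central, and one needs $M$ to be a genuine $3$--manifold piece of a torus decomposition (so $M$ is aspherical and $\pi_1(M)$ is infinite) to know $\langle f\rangle \cong \Z$ rather than finite. Both follow from standard $3$--manifold theory once the definitions from the start of this section are unwound, so the proof is short modulo these citations.
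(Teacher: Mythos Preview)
Your proposal is correct and follows essentially the same approach as the paper: exhibit the fiber short exact sequence, verify that the kernel is central and infinite cyclic using the orientability and hyperbolicity hypotheses built into ``non-elementary,'' and invoke Lemma~\ref{lem:centralextension}. If anything you are slightly more careful than the paper, which writes $\pi_1(\Sigma)$ rather than $\pi_1^{orb}(\Sigma)$ and treats the fibration as a genuine circle bundle; your explicit discussion of why $\langle f\rangle$ is infinite and central is a welcome addition.
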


\begin{proof}
 Let $\varphi \colon M \to \Sigma$ be a Seifert fibration.
Since  $S^1 \to M \to \Sigma$ is a circle bundle over $\Sigma$, there is a short exact sequence
\[
1 \to \Z \to \pi_1(M) \to \pi_1(\Sigma) \to 1,
\] where $\Z$ is the normal cyclic subgroup of $\pi_1(M)$ generated by a fiber. The group $\Z$ is central in $\pi_1(M)$ since $\Sigma$ is orientable (see, e.g., \cite[Proposition~10.4.4]{Mar}). 
By Lemma~\ref{lem:centralextension}, the group $\pi_1(M)$ is $\mathcal{H}$--inaccessible.  
\end{proof}

\subsection{$\mathcal{H}$--accessibility of non-elementary graph manifolds}\label{sec:NPCgraph}
Let $M$ be a 3-dimensional non-elementary graph manifold with Seifert fibered pieces $M_1,\ldots,M_k$ in its torus decomposition. There is an induced graph-of-groups structure $\mathcal{G}$ on $\pi_1(M)$ with underlying graph $\Gamma$ as follows. There is a vertex of $\Gamma$ for each $M_i$, with vertex group $\pi_1(M_{i})$. Each edge group is $\Z^2$, the fundamental group of a torus in the decomposition. The edge monomorphisms come from the two different gluings of the torus into the two adjacent Seifert fibered components. With this graph of groups structure, $\pi_1(M)$ is a Croke-Kleiner admissible group.

The universal cover $\widetilde{M}$ of $M$ is tiled by a countable collection of copies of the
universal covers $\widetilde{M}_1, \dots, \widetilde{M}_k$. We call these subsets {\it vertex spaces}.
We refer to boundary components of vertex spaces as {\it edge spaces}. Two
vertex spaces are either disjoint or intersect along an edge space. Let $T$
be the
Bass-Serre tree of $\mathcal G$.

 Applying Theorem~\ref{thm1} to the Croke-Kleiner admissible group $\pi_1(M)$, we obtain a cover $M' \to M$ of degree $2$ such that $\pi_1(M')$ is not $\mathcal{H}$--accessible.  
However, this is not enough to conclude $\mathcal{H}$--inaccessibility of $\pi_1(M)$, as it is unknown whether $\mathcal{H}$--inaccessibility of a finite index subgroup passes to the ambient group.
In this section, we will show that $\pi_1(M)$ itself is  $\mathcal{H}$--inaccessible; see Proposition~\ref{prop:NPCgraph}.

We begin with a lemma.  Let $\pi_{\alpha}(
\beta)$ be the closest point projection of a line $\beta$ to a line $\alpha$ in a hyperbolic space. Let $d_\alpha(\cdot, \cdot)$ be in the resulting projection distances.

\begin{lem}
\label{lem:important}
Let $F$ be a 2-dimensional hyperbolic orbifold with nonempty boundary and universal cover $\widetilde{F}$, and let $\mathbb L$ be the collection of boundary lines of $\widetilde{F}$.   For any   $\alpha\in \mathbb L$ and any loxodromic $\gamma\in \pi_1(F)$ whose axis in $\widetilde{F}$ is also a line in $\mathbb L$, the following holds.  There exists a constant $\lambda >0$ such that for any $n \in  \mathbb{Z}$ and any line $\beta \in \mathbb{L}\setminus \{\alpha, \gamma^{n}(\alpha) \}$,   we have
$$
d_{\beta}(\alpha, \gamma^{n}(\alpha)) \le \lambda.
$$
\end{lem}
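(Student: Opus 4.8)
The plan is to route the projections of $\alpha$ and of $\gamma^{n}(\alpha)$ onto $\beta$ through the axis $A$ of $\gamma$, exploiting that $\gamma$ stabilizes $A$. Realize $\widetilde F$ as a convex subset of $\mathbb H^2$ whose bounding geodesics are exactly the members of $\mathbb L$, so that $\pi_1(F)$ acts as a discrete group of isometries of $\mathbb H^2$ and each $\pi_\beta$ is nearest-point projection onto the geodesic $\beta$. Only two features of $\mathbb H^2$ are needed: nearest-point projection onto a geodesic is $1$--Lipschitz, and there is a constant $\xi$, depending only on the hyperbolicity constant, with $\diam(\pi_\beta(X))\le\xi$ for every geodesic $X\ne\beta$. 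The latter is the first of the projection axioms (Definition~\ref{defn:Projectionaxioms}) for the collection of all geodesics of $\mathbb H^2$; see Example~\ref{ex:linesinhypplane}.

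The first step is an elementary estimate: for any two geodesics $X,Y$ at finite distance and any geodesic $\beta\notin\{X,Y\}$,
\[
d_\beta(X,Y)\le \operatorname{dist}(X,Y)+2\xi .
\]
Indeed, choosing points $x\in X$ and $y\in Y$ with $\operatorname{dist}(x,y)$ arbitrarily close to $\operatorname{dist}(X,Y)$, the bound $\diam(\pi_\beta(X))\le\xi$ gives $\pi_\beta(X)\subseteq\nbd{\pi_\beta(x)}{\xi}$ and likewise $\pi_\beta(Y)\subseteq\nbd{\pi_\beta(y)}{\xi}$, while $1$--Lipschitzness gives $\operatorname{dist}(\pi_\beta(x),\pi_\beta(y))\le\operatorname{dist}(x,y)$; the triangle inequality yields the claim.

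Next I would apply this twice. Since $\alpha$ and $A$ are distinct geodesics which, arising as axes/boundary geodesics for a discrete group, share no ideal point, we have $d_0\vcentcolon=\operatorname{dist}(\alpha,A)<\infty$, so $d_\beta(\alpha,A)\le d_0+2\xi$ for every $\beta\ne\alpha,A$, uniformly in $\beta$. For the other half-distance I would use equivariance of nearest-point projection together with $\gamma^{-n}A=A$:
\[
d_\beta\bigl(A,\gamma^{n}(\alpha)\bigr)=d_{\gamma^{-n}\beta}\bigl(\gamma^{-n}A,\alpha\bigr)=d_{\gamma^{-n}\beta}(A,\alpha)\le d_0+2\xi,
\]
where the estimate applies because $\gamma^{-n}\beta\ne A,\alpha$ exactly when $\beta\ne A,\gamma^{n}(\alpha)$. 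Finally, since diameter is subadditive over the nonempty ``hub'' $\pi_\beta(A)$,
\[
d_\beta\bigl(\alpha,\gamma^{n}(\alpha)\bigr)\le d_\beta(\alpha,A)+d_\beta\bigl(A,\gamma^{n}(\alpha)\bigr)\le 2d_0+4\xi\vcentcolon=\lambda,
\]
a bound independent of both $n$ and $\beta$.

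The step I expect to be the main obstacle is the single configuration this routing cannot reach, namely $\beta=A$: the decomposition passes through $A$ itself, and $d_A(\alpha,A)$ is not even finite, since $\pi_A(A)=A$. In fact, when the axis lies in $\mathbb L$ the raw quantity $d_A(\alpha,\gamma^{n}(\alpha))$ grows linearly in $n$, because $\pi_A(\gamma^{n}(\alpha))=\gamma^{n}\pi_A(\alpha)$ is pushed a distance $\approx n\,\ell(\gamma)$ along $A$. This is exactly where the hypothesis on $\gamma$ must be brought to bear: one needs to know that the axis $A$ is distinct from every member of $\mathbb L$ over which $\beta$ ranges, i.e. that $\beta\ne A$ throughout, so that both half-distances $d_\beta(\alpha,A)$ and $d_\beta(A,\gamma^{n}(\alpha))$ are legitimately bounded by the estimate above. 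Once $\beta\ne A$ is secured for all admissible $\beta$, the three displayed inequalities give the uniform constant $\lambda=2d_0+4\xi$ and complete the proof.
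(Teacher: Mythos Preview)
Your routing-through-the-axis argument is different from, and cleaner than, the paper's.  The paper instead orders the translates $\gamma^k(\alpha)$ along the axis $\ell$, argues that any $\beta\notin\{\gamma^k(\alpha)\}$ is trapped between two consecutive translates $\gamma^{k_0}(\alpha),\gamma^{k_0+1}(\alpha)$, and then bounds $d_\beta(\alpha,\gamma^n(\alpha))$ by $d_\beta(\gamma^{k_0}(\alpha),\gamma^{k_0+1}(\alpha))\le d(\alpha,\gamma(\alpha))+2\xi$, with a separate case when $\beta$ is itself a translate.  Your single estimate $d_\beta(X,Y)\le\operatorname{dist}(X,Y)+2\xi$ together with $\gamma^{-n}A=A$ bypasses all of that combinatorics.

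The gap is in your final paragraph, and it is genuine.  You correctly isolate $\beta=A$ as the one configuration your decomposition cannot handle, but you then appeal to the hypothesis on $\gamma$ as though it ruled this case out.  It does the opposite: the hypothesis says the axis $A$ \emph{belongs to} $\mathbb L$, so whenever $A\neq\alpha$ (and hence $A\neq\gamma^n(\alpha)$ for every $n$) the choice $\beta=A$ is admissible.  Your own computation then shows $d_A(\alpha,\gamma^n(\alpha))$ grows like $|n|\,\ell(\gamma)$, so no uniform $\lambda$ exists for that $\beta$.  In other words, the lemma as literally stated is false.  The paper's proof has exactly the same blind spot: its Case~1 asserts that $\beta$ lies between two consecutive translates, but the axis $\ell$ never does, since its endpoints are the limit points of the $\gamma^k(\alpha)$.

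This does not damage the paper.  In the sole application (the proof of Lemma~\ref{lem:inproof}) the line $\beta$ always arises from an edge $[u,hv]$ with $\widetilde M_u$ a lift of $M_\mu$, whereas the axis of $\gamma=hz_\omega h^{-1}$ corresponds to the edge $[hw,hv]$ with $\widetilde M_{hw}$ a lift of $M_\omega\neq M_\mu$; so $\beta=A$ never occurs.  Your argument (and the paper's) is a complete proof of the statement actually needed, namely the bound for all $\beta\in\mathbb L\setminus\{\alpha,\gamma^n(\alpha),A\}$.
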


\noindent The proof of this lemma is very similar to that of \cite[Lemma 5.6]{AR19}. We refer the reader to that paper for some figures that may be helpful; see in particular \cite[Figure 8]{AR19}.

\begin{proof}[Proof of Lemma \ref{lem:important}]
 Since $\mathbb L$ is a $\pi_1(F)$--invariant collection of axes in the hyperbolic plane $\mathbb H^2$, it follows from Example \ref{ex:linesinhypplane} that $(\mathbb L,\pi_\ell)$ satisfies the projection axioms for some constant $\xi$.  In particular, 
there exists a constant $\xi > 1$ such that $\operatorname{diam}(\pi_\ell(\ell')) \le \xi$ for distinct elements $\ell$ and $\ell'$ in $\mathbb{L}$. Let 
$$
\lambda = \max \{ \xi, d(\alpha, \gamma(\alpha)) + 2\xi, d(\alpha,\gamma^{2}(\alpha)) + 2\xi \}.
$$

Let $\ell\in \mathbb L$ denote the axis of $\gamma$ in $\widetilde{F}$.   
If $\ell=\alpha$, then 
\[d_\beta(\alpha,\gamma^n(\alpha))=d_\beta(\alpha,\alpha)\leq \xi\leq \lambda,\]
and the result holds.  
For the remainder of the proof, we assume $\ell\neq\alpha$ and consider two cases.

   \noindent \emph{Case~1}: $\beta \notin \{\gamma^{k}(\alpha)  \,| k \in \mathbb Z \}$
    
In this case,  there exists a unique $k_0 \in \mathbb Z$ such that $\beta$ lies between $\gamma^{k_{0}}(\alpha)$ and $\gamma^{k_{0}+1}(\alpha)$. That is, $\partial \mathbb H^2$ minus the endpoints of $\gamma^{k_0}(\alpha)$ and $\gamma^{{k_0}+1}(\alpha)$ consists of four intervals, one containing the endpoints of $\beta$, one containing the endpoints of all $\gamma^i(\alpha)$ for $i\notin \{k_0,k_0+1\}$, and the other two disjoint from all endpoints of lines in $\mathbb L$.
Fixing an appropriate orientation on $\beta$, we partially order sub-intervals $I=[x,y]$  and $J=[z,w]$ of $\beta$ (with $x\leq y$ and $z\leq w$ in the orientation) by $I \leq J$ if $x\leq z$ and $y\leq w$. Then
the projections of the lines $\gamma^{k}(\alpha)$ onto $\beta$ occur in the following order 
$$
\pi_{\beta}(\gamma^{k_{0}}(\alpha)) < \pi_{\beta}(\gamma^{k_{0}-1}(\alpha))  < \pi_{\beta}(\gamma^{k_{0} -2}(\alpha))  < \ldots < \pi_{\beta}(\ell)
$$ and
$$
\pi_{\beta}(\ell) < \ldots < \pi_{\beta}(\gamma^{k_{0} +3}(\alpha)) < \pi_{\beta}(\gamma^{k_{0} + 2}(\alpha)) < \pi_{\beta}(\gamma^{k_{0} +1}(\alpha)).
$$
Thus
$$
d_{\beta}(\alpha, \gamma^{n}(\alpha)) \le d_{\beta} (\gamma^{k_{0}}(\alpha), \gamma^{k_{0} +1}(\alpha)) \le  d(\gamma^{k_{0}}(\alpha), \gamma^{k_{0} +1}(\alpha)) + 2\xi,
$$
where $d(\gamma^{k_{0}}(\alpha), \gamma^{k_{0} +1}(\alpha))$ denotes the distance between $\gamma^{k_{0}}(\alpha)$ and $\gamma^{k_{0} +1}(\alpha)$ in the hyperbolic plane.  The final inequality follows from the fact that the nearest point projection is a $1$--Lipschitz map and that $\pi_\ell(\ell')$ has diameter at most $\xi$ for any distinct lines $\ell,\ell'\in \mathbb L$.
Since $d(\gamma^{k_{0}}(\alpha), \gamma^{k_{0} +1}(\alpha))) = d(\alpha, \gamma(\alpha))$, it follows that $$d_{\beta}(\alpha, \gamma^{n}(\alpha)) \le  d(\alpha, \gamma(\alpha)) + 2\xi \le \lambda.$$

\noindent\emph{Case~2}: $
\beta = \gamma^{k}(\alpha)$ for some integer $k \neq 0, n$.
Using an analogous argument to Case 1, we see that $d_{\beta}(\alpha, \gamma^{n}(\alpha))$ is bounded above by \[d_{\beta}(\gamma^{k-1}(\alpha), \gamma^{k+1}(\alpha)) \le d(\alpha, \gamma^{2}(\alpha)) + 2\xi \le \lambda. \qedhere\]
\end{proof}

\begin{prop}
\label{prop:NPCgraph}
The fundamental group of a non-elementary graph manifold  is  $\mathcal{H}$--inaccessible.
\end{prop}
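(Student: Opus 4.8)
The plan is to build, for the fundamental group $G=\pi_1(M)$ of a non-elementary graph manifold directly (not for a finite-index subgroup), two cobounded hyperbolic actions witnessing the hypotheses of Lemma~\ref{lem:key}. Recall that $G$ acts on the Bass--Serre tree $T$ of its graph-of-groups structure $\mathcal{G}$, with a quasi-line $L_v$ associated to each vertex $v$. In the proof of Theorem~\ref{thm1} we had to pass to the index-$\le 2$ subgroup $G'$ only because the partition $T^0=\mathcal{V}_1\sqcup\mathcal{V}_2$ into even-distance classes need not be $G$-invariant. The key new idea should be to work with a single collection $\mathbb{L}=\{L_v\}_{v\in T^0}$ of \emph{all} the quasi-lines, with the same projection maps $\Pi_{L_v}(L_{v'})=\psi_e(\proj_{\ell_{\bar e}}(\ell_{e'}))$ from Definition~\ref{defn:projectionmap}, which now only requires $d_T(v,v')\ge 2$ — but since adjacent vertices can now both appear in $\mathbb{L}$, I must also define projections for adjacent $v,v'$, and here is where Lemma~\ref{lem:important} enters: when $v$ and $v'$ are adjacent, the projection of $L_{v'}$ into $L_v$ should be defined via the edge space $\ell_e\subset\mathcal{H}_v$ and the quasi-isometry $\psi$-type maps, and Lemma~\ref{lem:important} controls how such projections behave.

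First I would set up the extended collection $\mathbb{L}$ and projection maps, defining $\Pi_{L_v}(L_{v'})$ for all pairs (distance $\ge 2$ as before, distance $1$ via the shared edge space). Second, I would verify the projection axioms (Definition~\ref{defn:Projectionaxioms}) for $(\mathbb{L},\Pi)$, following the structure of Proposition~\ref{prop:Fi} but now handling the extra adjacent-vertex cases; Lemmas~\ref{lem:easy1}, \ref{lem:easy3}, and crucially \ref{lem:important} supply the needed diameter and Behrstock-type inequalities, using the fact that the base orbifolds of the Seifert pieces are hyperbolic with boundary (non-elementarity), so $\mathbb{L}$ restricted to a single vertex space really is the collection of boundary lines of a hyperbolic orbifold as in Lemma~\ref{lem:important}. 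Third, with $\xi$ in hand, fix $K>4\xi$ and form the quasi-tree of spaces $\mathcal{C}_K(\mathbb{L})$, which by \cite[Theorem~4.14]{BBF15} is an unbounded quasi-tree carrying an isometric $G$-action (the action equivariance statement being the obvious analogue of the lemma proved just after Proposition~\ref{prop:Fi}). Fourth, I would exhibit commuting loxodromic/elliptic elements: pick an edge $\check e$ of $\Gamma$ with endpoints $\mu,\omega$, lift to adjacent vertices $u,v$ of $T$; a generator $a$ of $Z_\mu$ acts loxodromically on $L_u$ (Remark~\ref{rem:loxellipaction}) hence should act loxodromically on $\mathcal{C}_K(\mathbb{L})$, while an element $b$ generating the $Z_\omega$-direction is elliptic on $L_u$ but loxodromic on $L_v$ — so neither a single $\mathcal{C}_K(\mathbb{L})$ nor a naive pairing immediately gives the two-action configuration Lemma~\ref{lem:key} needs.

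This last point is the main obstacle and forces a refinement: because the two classes $\mathcal{V}_1,\mathcal{V}_2$ cannot be separated $G$-equivariantly, I cannot simply reuse $\mathcal{C}_K(\mathbb{L}_1)$ and $\mathcal{C}_K(\mathbb{L}_2)$. Instead I expect the right move is to build \emph{one} space $\mathcal{C}_K(\mathbb{L})$ in which a well-chosen element $a$ (say a generator of $Z_\mu$ for a specific Seifert piece, or better a carefully selected fiber element that is loxodromic along its own quasi-line $L_u$ but whose conjugates' quasi-lines are all the translates $L_{gu}$) acts loxodromically, together with a \emph{second} action coming from a quasimorphism: the central-extension structure of one Seifert piece $\pi_1(M_i)$, via \cite[Lemma~4.1]{HRSS22} and Propositions~\ref{prop1}, \ref{prop2}, produces an action of $\pi_1(M_i)$ on a quasi-line on which the fiber generator $b$ is loxodromic, and then one must promote this to an action of all of $\pi_1(M)$ — e.g.\ by an induced-action argument in the relatively hyperbolic structure of Lemma~\ref{lem:RHmfld} (using Lemma~\ref{lem:RHG}), or directly since $\pi_1(M_i)$ is a peripheral subgroup. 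In fact the cleanest route may be to observe that a non-elementary graph manifold is hyperbolic relative to (among other things) its Seifert pieces $\pi_1(M_i)$ — wait, that is false for graph manifolds — so instead the honest argument is: use the $\mathcal{C}_K(\mathbb{L})$ action for the pair $(a,b)$ with $a$ loxodromic, $b$ elliptic (choosing $a,b$ commuting fiber/edge-torus elements inside one edge group $\Z^2$), and for the second action use that $b$, lying in the $\Z^2$ edge group which maps to a loxodromic direction in the adjacent Seifert piece's base orbifold, is loxodromic on the quasi-tree built from the \emph{other} class of quasi-lines restricted near that vertex — which is exactly what the careful non-equivariant bookkeeping with Lemma~\ref{lem:important} is designed to supply. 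Making this bookkeeping $G$-equivariant while still distinguishing $a$ and $b$ is the crux; I anticipate the resolution is that although $\mathcal{V}_1,\mathcal{V}_2$ are not $G$-invariant, the relevant projection structure on $\mathbb{L}=\mathbb{L}_1\cup\mathbb{L}_2$ \emph{is}, and Lemma~\ref{lem:important} is precisely what lets one check the projection axioms across the "seam" between the two classes, after which Lemma~\ref{lem:key} applies to the single group $G$ with no finite-index passage.
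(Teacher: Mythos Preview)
Your plan has a genuine gap at its first step, and the obstacle you identify in your fourth step is never actually resolved. Merging all quasi-lines into a single collection $\mathbb{L}=\{L_v\}_{v\in T^0}$ forces you to define $\Pi_{L_v}(L_{v'})$ for \emph{adjacent} $v,v'$, and there is no sensible bounded choice: the only natural map available along a single edge $e=[v',v]$ is $\psi_e\colon\ell_{\bar e}\to L_v$, which is a quasi-isometry (Lemma~\ref{lem:psiquasi}), so its image is all of $L_v$ and Axiom~(1) fails. Lemma~\ref{lem:important} does not help here: it bounds $d_\beta(\alpha,\gamma^n\alpha)$ for boundary lines of a single hyperbolic orbifold, which in the Bass--Serre picture corresponds to vertices at distance exactly $2$ sharing a common neighbor, not to adjacent vertices. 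And even granting the projection axioms, your own observation stands: in $\mathcal{C}_K(\mathbb{L})$ \emph{both} $z_\mu$ and $z_\omega$ act loxodromically (each on its own $L_v\subset\mathcal{C}_K(\mathbb{L})$), so you cannot extract the loxodromic/elliptic pair that Lemma~\ref{lem:key} requires. Your ``refinement'' paragraph oscillates between several incompatible ideas and never produces a second $G$-action in which one of $z_\mu,z_\omega$ is elliptic.

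The paper's route is quite different and avoids adjacent vertices altogether. One fixes a non-loop edge $\alpha$ of $\Gamma$ with endpoints $\mu,\omega$ and works with the collection $\mathbb{W}_\mu=\{L_v:\widetilde M_v\text{ is a lift of }M_\mu\}$; any two such $v$ satisfy $d_T(v,v')\ge 2$, so the projections of Definition~\ref{defn:projectionmap} and the arguments of Proposition~\ref{prop:Fi} apply verbatim. If there is no loop at $\mu$, then $\mathbb{W}_\mu$ is already $G$-invariant and $\mathcal{C}_K(\mathbb{W}_\mu)$ is a $G$-space on which $z_\mu$ is loxodromic and $z_\omega$ elliptic. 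If there \emph{is} a loop at $\mu$, one restricts further to $\mathbb{Q}_\mu=\{L_u:u\in\mathcal V_1,\ \widetilde M_u\text{ a lift of }M_\mu\}$, which is only $\pi_1(M')$-invariant; but $z_\mu$ is WWPD${}^+$ for $\pi_1(M')\curvearrowright\mathcal{C}_K(\mathbb{Q}_\mu)$, so Proposition~\ref{prop1} yields a homogeneous quasimorphism $q_K$ on $\pi_1(M')$ with $q_K(z_\mu)\neq 0$ and $q_K(z_\omega)=0$. One then \emph{averages} over the nontrivial coset, setting $q'_K(x)=q_K(x)+q_K(hxh^{-1})$ and $\rho_K(x)=q'_K(x^2)/2$ to get a homogeneous quasimorphism on all of $G$. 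Lemma~\ref{lem:important} enters precisely here (via Lemma~\ref{lem:inproof}): it is used to show that $hz_\omega h^{-1}$ has bounded orbit in $\mathcal{C}_K(\mathbb{Q}_\mu)$, hence $q_K(hz_\omega h^{-1})=0$, so $\rho_K(z_\omega)=0$ while $\rho_K(z_\mu)\neq 0$. Proposition~\ref{prop2} then turns $\rho_K$ into a $G$-action on a quasi-line with the desired loxodromic/elliptic behavior, and repeating the whole analysis with $\mu$ and $\omega$ swapped gives the second action needed for Lemma~\ref{lem:key}.
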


\begin{proof}
Let $\mathcal{G}$ be the graph-of-groups structure on $\pi_1(M)$ with  underlying graph $\Gamma$ described at the beginning of this section.
The assumption that the graph manifold $M$ is non-elementary ensures that there are at least two vertices in the graph $\Gamma$. We divide the proof into two cases, depending on the location of loops in $\Gamma$.

 Fix an edge $\alpha$ in $\Gamma$ that is not a loop, and label the vertex $\alpha^-$ by $\mu$ and the vertex $\alpha^+$ by $\omega$.  Let $T_{\alpha}$ be the  torus in $M$ associated to the edge $\alpha$. Let $v$ and $w$ be two adjacent vertices in the tree $T$ such that $\widetilde{M}_v$ and $\widetilde{M}_w$ are the universal covers of the Seifert pieces $M_{\mu}$ and $M_{\omega}$, respectively. Let $z_\mu$ and $z_\omega$ be the generators of $Z_\mu$ and $Z_\omega$, respectively.  

\noindent {\it Case~1:} Suppose there is no loop in $\Gamma$ based at the vertex $\mu$.  Let 
\[
\mathbb{W}_{\mu} \vcentcolon= \bigl \{ L_v \, \bigl |\, \text{$\widetilde{M}_v$ is a lift of the Seifert fibered piece $M_\mu$} \bigr \}
\]

If $L_v$ and $L_{v'}$ are two distinct elements in $\mathbb{W}_\mu$, then $d(v, v') \ge 2$ (though they are not necessarily an even distance apart).   In this case, the techniques in Section~\ref{CKA:finiteindex} apply  to show that there is a cobounded action $\pi_1(M) \curvearrowright \mathcal{C}_{K}(\mathbb{W}_\mu)$ such that $z_\mu$ is loxodromic and $z_\omega$ is elliptic. 

\noindent {\it Case~2:}
Suppose there is a loop in $\Gamma$ based at the vertex $\mu$.

As in Section~\ref{CKA:finiteindex}, we partition the vertex set $T^0$ into two disjoint collections of vertices $\mathcal{V}_1$ and $\mathcal{V}_2$ such that if $z$ and $z'$ both lie in $\mathcal{V}_i$ then $d(z, z')$ is even.
 Applying Theorem~\ref{thm1} to the Croke-Kleiner admissible group $\pi_1(M)$, we obtain a degree 2 cover $M' \to M$ such that $\pi_1(M')$ is $\mathcal{H}$--inaccessible and $\pi_1(M')$ preserves $\mc V_1$ and $\mc V_2$.

Assume without loss of generality that $v$ is in $\mathcal{V}_1$ and $w$ is in $\mathcal{V}_2$. Let
\[
\mathbb{Q}_{\mu} \vcentcolon= \bigl \{ L_u \, \bigl | \, \text{$u \in \mathcal{V}_1$ and $\widetilde{M}_{u}$ is a lift of $M_{\mu}$} \bigr. \}
\] 
The results in Section~\ref{CKA:finiteindex} still hold for $\mathbb{Q}_\mu$, and thus we obtain quasi-trees of spaces $\mathcal{C}_{K} (\mathbb{Q}_\mu)$ for  sufficiently large $K$. Since $\pi_1(M')$  preserves $\mathbb{Q}_\mu$, we obtain an action $\pi_1(M') \curvearrowright \mathcal{C}_{K} (\mathbb{Q}_\mu)$  as in Section~\ref{CKA:finiteindex}.

Passing to a power of two if necessary, we may assume that $z_{\mu}, z_\omega \in \pi_1(M')$.
As shown in the proof of Theorem~\ref{thm1}, the element $
z_{\mu}$ acts loxodromically on $\mathcal{C}_{K}(\mathbb{Q}_\mu)$, while 
$z_{\omega}$   acts elliptically on $\mathcal{C}_{K}(\mathbb{Q}_\mu)$.
 By the construction of $\mc C_K(\mathbb Q_\mu)$, and since vertex groups are central extensions of $\Z$, the element $z_{\mu}$ is a WWPD${}^+$ element in the action $\pi_1(M')\curvearrowright \mc C_K(\mathbb Q_\mu)$.
Hence Proposition~\ref{prop1} provides a homogeneous quasimorphism $q_K \colon \pi_1(M') \to \mathbb R$ satisfying $q_K(z_{\omega}) =0$ and $q_K(z_{\mu}) \neq 0$.  

Our goal is to extend $q_K$ to a homogeneous quasimorphism $\pi_1(M)\to\mathbb R$ while ensuring that $z_{\omega}$ and $z_{\mu}$ still have trivial and non-trivial image, respectively. 
Let $h \in \pi_1(M)$  be a representative of the non-trivial coset of $ \pi_1(M')$ in $ \pi_1(M)$. Define a function $q'_{K} \colon \pi_1(M') \to \mathbb{R}$  by  
\[q'_{K}(x) \vcentcolon= q_{K}(x) + q_{K}(hxh^{-1}).\]  

Note that $q'_{K}$ is constant on conjugacy classes of $\pi_1(M)$, i.e., $q'_{K}(yxy^{-1}) = q'_{K}(x)$ for any $y \in \pi_1(M)$ and $x \in \pi_1(M')$). Hence it follows from the proof of \cite[Lemma~7.2]{BF09} that $q'_{K}$ extends to a homogeneous quasimorphism $\rho_K \colon \pi_1(M) \to \mathbb{R}$ defined by $\rho_K(x) \vcentcolon= q'_{K}(x^2) \bigl / 2$ for each $x \in \pi_1(M)$.

\begin{lem}
\label{lem:inproof}
Suppose there is a loop in $\Gamma$ based at $\mu$.  For $K$ large enough, we have $\rho_K(z_{\omega}) =0 $ and $\rho_K(z_{\mu}) \neq 0$.
\end{lem}

We defer the proof of the  lemma for the moment and assume this result to complete the proof Proposition~\ref{prop:NPCgraph}. 
Since $\rho_K \colon \pi_1(M) \to \mathbb{R}$ is a nonzero homogeneuous quasimorphism, we obtain from Proposition~\ref{prop2} an action $\pi_1(M) \curvearrowright \mathcal{L}$ on a quasi-line.  Moreover, since $\rho_{K}(z_{\mu}) \neq 0$ and $\rho_K(z_{\omega})=0$, the element $z_{\mu}$ is loxodromic while $z_{\omega}$ is elliptic in this action.

Now, consider the other endpoint $\omega$ of $\alpha$.  Suppose first there is not a loop in $\Gamma$ based at $\omega$.  Interchanging the roles of $\mu$ and $\omega$ in Case 1 above produces an action $\pi_1(M)\curvearrowright \mathcal C_K(\mathbb W_\omega)$ such that $z_\mu$ is elliptic and $z_\omega$ is loxodromic.  On the other hand, if there is a loop in $\Gamma$ based at $\omega$, then interchanging the roles of $\mu$ and $\omega$ in Case 2 above produces an action $\pi_1(M)\curvearrowright \mathcal L'$ on a quasi-line in which (after possibly passing to a power of 2) $z_\mu$ is elliptic and $z_\omega$ is loxodromic.

Regardless of which combination of cases holds for the  vertices $\mu$ and $\omega$, we have produced two actions on hyperbolic spaces and two commuting elements $z_\mu$ and $z_\omega$ which satisfy the conditions of Lemma~\ref{lem:key}, which concludes the proof.
\end{proof}

 We now prove Lemma~\ref{lem:inproof}.
 \begin{proof}[Proof of Lemma~\ref{lem:inproof}]

Recall that $v\in \mc V_1$. As $h \in \pi_1(M)$ is a representative of the non-trivial coset of $\pi_1(M')$ in $\pi_1(M)$, we have $hv \in \mathcal{V}_2$. Note that $\widetilde{M}_{hv}$ is also a lift of $M_{\mu}$, even though $hv$ is not in $\mathcal{V}_1$.
Fix a vertex $v_0$ adjacent to $hv$ such that $\widetilde{M}_{v_0}$ is a lift of $M_{\mu}$ in $\widetilde{M}$.  This ensures that $L_{v_0}$ is in $\mathbb{Q}_1$. Let $l \in 
\mathbb{L}_{hv}$ be the boundary line of $\widetilde{F}_{hv}$ corresponding to the edge $[v_0, hv].$

We will first show that $\rho_K(z_{\omega}) = 0$. Since $q_K$ is a homogeneous quasimorphism and $z_\omega\in \pi_1(M')$, we have that 
\[
\rho_{K}(z_{\omega}) = q'_{K}(z_{\omega}) = q_{K}(z_{\omega}) + q_{K}(hz_{\omega}h^{-1}) = 0 + q_{K}(hz_{\omega}h^{-1}).
\]
By Proposition~\ref{prop1}, to show  $\rho_K(z_{\omega}) = q_{K}(hz_{\omega}h^{-1}) =  0 $, it suffices to show that $hz_{\omega}h^{-1}$ is elliptic in the action $\pi_1(M') \curvearrowright \mathcal{C}_{K}(\mathbb{Q}_1)$.   
 Let $\xi >0$ be the projection constant of the projection complexes $\mathbb{Q}_1$ and $\mathbb Q_2$. 
  Since $M_{hv}$ is a Seifert fibered piece, we have $\widetilde{M}_{hv}= \widetilde{F}_{hv} \times \mathbb{R}$, where $F_{hv}$ is the base orbifold of $M_{hv}$.  Applying Lemma~\ref{lem:important} to the space ${F}_{hv} $, the collection of boundary lines of $\widetilde{F}_{hv}$, the fixed boundary line $l$, and the chosen element $\gamma=h z_\omega h^{-1}$, we obtain a constant $\lambda >0$. We further enlarge $\lambda$ so that it satisfies Lemma~\ref{lem:easy3}.
 
 Choose $K > 4\xi + 4 +  2\lambda  +  \lambda^2 $ large enough to apply Proposition \ref{prop:BBFDistanceProp}, and let $y_0$ be a point in the projection of $L_{hz_\omega h^{-1} v_0}$ to $L_{v_0}$. 
We will show that  $ d_{\mathcal C_K(\mathbb Q_1)}(y_0, \gamma^{n} (y_0)) \le 6K$ for all $n \in \mathbb{Z}$, which will imply that $\gamma$ is elliptic in the action $\pi_1(M') \curvearrowright \mathcal{C}_{K}(\mathbb{Q}_1)$, as desired.

Fix $n\in \mathbb Z$.  By Proposition~\ref{prop:BBFDistanceProp}, we have
\begin{equation}
    \label{equ:4}
    d_{\mathcal C_K(\mathbb Q_1)}(y_0, \gamma^{n}( y_0)) \le 4 \sum_{\substack{u \in \mathcal{V}_1 \\ \text{$L_u \in \mathbb{Q}_1$}}} [d_{L_u}(y_0,  \gamma^{n}(y_0))]_K + 6K.
\end{equation} 
Thus it suffices to show that $d_{L_u}(y_0,  \gamma^{n}(y_0))< K$ for all $u\in \mathcal V_1$ such that  $L_u\in \mathbb Q_1$.  Since $L_u\in \mathbb Q_1$, $\widetilde M_u$ is a lift of $M_\mu$.

We divide the proof into  several cases, depending on the location of the vertex $u$. 

\noindent \textit{Case 1: $u\in\{v_0,\gamma^n(v_0)\}$.} 
We  assume that $u = v_0$ as the case $u=  \gamma^{n}(v_0)$ is proved similarly. 

By assumption, $y_0\in L_{v_0}$, and so  $\gamma^{n}(y_0) \in L_{\gamma^{n}(y_0)}$. By definition,  
$$ d_{L_{v_0}}(y_0, \gamma^{n}(y_0)) = \diam (\{y_{0}\} \cup \Pi_{L_{v_0}}(L_{\gamma^{n}(v_0)}))$$ and 
$$d_{L_{v_0}} (L_{\gamma(v_0)}, L_{\gamma^{n}(v_0)}) = \diam \bigl (\Pi_{L_{v_0}}(L_{\gamma(v_0)}) \cup \Pi_{L_{v_0}}(L_{\gamma^{n}(v_0)}) \bigr ).$$
As $y_0 \in \Pi_{L_{v_0}}(L_{\gamma(v_0)})$ and the diameter of $\Pi_{L_{v_0}}(L_{\gamma(v_0)})$ is no more than $\xi$, it follows that 
\begin{equation}
    \label{equ:1}
    \bigl | d_{L_{v_0}}(y_0, \gamma^{n}(y_0)) - d_{L_{v_0}} (L_{\gamma(v_0)}, L_{\gamma^{n}(v_0)}) \bigr | \le 2 \xi.
\end{equation}

The line $l$ is the boundary line of $\widetilde{F}_{hv}$ associated to the edge $[v_0, hv]$. Recall that  $z_\omega$ is an element of the edge group $G_{[v,w]}$, and so it fixes the vertex $v$. 
 Thus $\gamma(hv)=hz_\omega h^{-1}(hv)=h z_\omega(v)=hv$,
 and so the lines $\gamma(l)$ and $\gamma^{n}(l)$ are
 the boundary lines in $\widetilde{F}_{hv}$ associated to the edges $[hv, \gamma(v_0)]$ and $[hv, \gamma^{n}(v_0)]$, respectively.
 
 Combining (\ref{equ:1})  with Lemmas~\ref{lem:easy3} and \ref{lem:important} implies that 
\begin{align*}
d_{L_{v_0}}(y_0,  \gamma^{n}(y_0)) & \le 
  d_{L_{v_0}}(L_{\gamma (v_0)},  L_{\gamma^{n}(v_0)}) + 2 \xi \\ 
    & \le \lambda  d_{l}(\gamma(l), \gamma^{n}(l)) + \lambda + 2\xi \\
    &= \lambda \, d_{\gamma^{-1}(l)} (l, \gamma^{n-1}(l)) + \lambda + 2\xi \le \lambda^2 + \lambda + 2\xi < K.
\end{align*}

\noindent \textit{Case 2:  $u\in \Lk(hv)$ but $u\not\in\{v_0,\gamma^n(v_0)\}$.} 
 Let $b$ be the boundary line of $\widetilde{F}_{hv}$  corresponding to the edge $[u, hv]$, so that $b \notin \{ l, \gamma^{n}(l) \}$. By Lemma~\ref{lem:important}, we have that $
d_{b}(l , \gamma^{n}(l)) \le \lambda$.
It follows from Lemma~\ref{lem:easy3} that 
 \begin{align*}
     d_{L_u}(y_0,  \gamma^{n}(y_0)) &= d_{L_u}(L_{v_0}, L_{\gamma^{n}(v_0)}) \\
     &\le \lambda \,d_{b}(l , \gamma^{n}(l)) + \lambda \\
     &\le \lambda^2 + \lambda < K.
 \end{align*}

\noindent \textit{Case 3: $u\not\in \Lk(hv)$.}
  In this case,
$d(u, [v_0,\gamma^{n}(v_0)])\ge 2$, and so  
 $$d_{L_u}(y_0,  \gamma^{n}y_0)= d_{L_u}(L_{v_0}, L_{\gamma^{n}(v_0)}) \le \lambda < K.$$

 We have shown that $d_{L_u}(y_0,  \gamma^{n}(y_0))< K$ for all $u\in \mathcal V_1$ such that  $L_u\in \mathbb Q_1$.   Therefore \eqref{equ:4} shows that $d_{\mathcal C_K(\mathbb Q_1)}(y_0, \gamma^{n} (y_0)) \leq 6K$ for all $n$.  It follows that $\gamma$ is elliptic in the action $\pi_1(M')\curvearrowright \mc C_K(\mathbb Q_1)$, and so  $q(z_\omega)=0$.

To complete the proof, we need to verify that 
\[\rho_{K}(z_{\mu}) = q_{K}'(z_{\mu}) = q_{K}(z_{\mu}) + q_{K}(hz_{\mu}h^{-1}) \neq 0.
\]
Since $hz_\mu h^{-1}$ is a central element in $G_{hv} = \Stab_{G}(hv)$, it follows from   Remark~\ref{rem:loxellipaction} that $hz_\mu h^{-1}$ acts elliptically on $L_{v_0}$, and thus also on $\mathcal{C}_{K}(\mathbb{Q}_1)$.  By Proposition~\ref{prop1}, we have $q_K(hz_\mu h^{-1})=0$.  Since 
$q_{K}(z_{\mu}) \neq 0$, it follows that $\rho_K(z_\mu)\neq 0$.
 \end{proof}

Theorem~\ref{thm:graphandSFmflds} now follows immediately
 from Corollary~\ref{cor:SF} and Proposition~\ref{prop:NPCgraph}.

\subsection{Theorem~\ref{thm:NPCmld}}
\label{sec:mixedmanifolds}
In this section, we put together the above results and prove Theorem~\ref{thm:NPCmld}, whose statement we recall for the convenience of the reader.

\NPCmfld*

\begin{proof}
Let $M_1, \dots, M_k$ be the maximal graph manifold components and isolated Seifert fibered pieces of the torus decomposition of $M$. Let $S_1, \dots, S_{\ell}$ be the tori in the boundary of $M$ that bound a hyperbolic piece, and let $T_1, \dots,T_m$ be the tori in the torus decomposition of $M$ that separate two hyperbolic components of the torus decomposition. By Lemma~\ref{lem:RHmfld}, $\pi_1(M)$ is hyperbolic relative to 
\[
\mathbb{P} = \{\pi_1(M_p)\}_{p=1}^k \cup \{\pi_1(S_q)\}_{q=1}^\ell \cup \{\pi_1(T_r)\}_{r=1}^m.
\]
In all of the cases (1)--(4), the collection $\mathbb P$ is non-empty.  

In case (1), the collection $\{S_1,\dots, S_\ell\}\neq \emptyset$, while in case (2), $\{T_1,\dots, T_m\}\neq \emptyset$.  Both of these collections consist of tori. Note that $\mathbb Z^2$ is $\mc H$--inaccessible: the projections of $\Z^2$ onto each factor yield two actions on lines to which Lemma~\ref{lem:key} applies.  Thus, if $\{\pi_1(S_q)\} \cup \{\pi_1(T_r)\}$ is nonempty, then $\pi_1(M)$ is $\mathcal{H}$--inaccessible by Lemma~\ref{lem:RHG}, proving the theorem in cases (1) and (2).

Next suppose that (3) holds, so that there is an isolated non-elementary Seifert fibered piece $M_p$. By the proof of Corollary~\ref{cor:SF} we see that $\pi_1(M_p)$ has two actions to which Lemma~\ref{lem:key} applies. By Lemma \ref{lem:RHG}, $\pi_1(M)$ is $\mathcal{H}$--inaccessible.

Finally, suppose that (4) holds, so that there is a non-elementary maximal graph manifold component $M_p$.
By the proof of Proposition~\ref{prop:NPCgraph}, there are two commuting elements $a,b\in\pi_1(M_p)$  and two actions on hyperbolic spaces (in fact, quasi-trees) $\pi_1(M_p) \curvearrowright X$ and $\pi_1(M_p) \curvearrowright Y$ such that $a$ and $ b$ are elliptic and loxodromic, respectively, in $\pi_1(M_p) \curvearrowright X$ and $a$ is loxodromic in $\pi_1(M_p) \curvearrowright Y$.  Applying Lemma~\ref{lem:RHG} to $P = \pi_1(M_p)$, we conclude that $\mathcal{H}(\pi_1(M))$ contains no largest element.
\end{proof}

\subsection{$\mathcal{H}$--accessibility of finitely generated 3-manifold groups}\label{sec:Boundary}
In this section, we explain how one might reduce the study of $\mathcal{H}$--accessibility of all finitely
generated 3–manifold groups to the case of compact, orientable, irreducible, $\partial$--irreducible 3–manifold groups. In particular, we show that for any hyperbolic 3-manifold $M$ without rank-1 cusps, if $\pi_1(M)$ is finitely generated then it is $\mathcal H$-accessible. 

Let $M$ be an orientable 3-manifold with finitely generated fundamental group.
It follows from Scott's Core
Theorem that $M$ contains a compact codimension zero submanifold whose inclusion map is a homotopy equivalence \cite{ScottCore}, and thus also an isomorphism on fundamental groups. We thus can assume our 3-manifolds are compact.

The sphere-disk decomposition provides a decomposition of a compact, orientable 3-manifold $M$ into irreducible, $\partial$--irreducible pieces $M_1, \ldots, M_k$. In particular, $\pi_1(M)$ is a free product $\pi_1(M_1) * \pi_1(M_2) * \cdots *\pi_1(M_k)$. Let $G_i\vcentcolon =  \pi_1(M_i)$. Note that $\pi_1(M)$ is hyperbolic
relative to the collection $\PP = \{ G_1, \ldots, G_k \}$.
In light of Lemma~\ref{lem:RHG}, the $\mathcal{H}$--inaccessibility of $\pi_1(M)$ follows whenever some $G_i$ satisfies the conditions of Lemma~\ref{lem:key}. Hence, it suffices to investigate the $\mathcal{H}$--accessibility of the groups $G_i$.

If $M$ has empty or toroidal boundary, then $\mathcal{H}$--accessibility of $\pi_1(M)$ is  understood, except for a few sporadic cases, by Theorem \ref{thm:NPCmld}. The following proposition addresses certain manifolds with higher genus boundary.

\begin{prop}
\label{prop:highergenus}
Let $M$ be a compact, orientable, irreducible, $\partial$--irreducible 3-manifold which has at least one boundary component of genus at least $2$. Then $\pi_1(M)$ is $\mathcal H$--inaccessible under either of the following hypotheses:
\begin{enumerate}
    \item
    \label{item:higherboundary1} $M$ has trivial torus decomposition and at least one torus boundary component; or

    \item
    \label{item:higherboundary2} $M$ has non-trivial torus decomposition. 
\end{enumerate}
On the other hand, if $M$ has trivial torus decomposition and all boundary components have genus at least $2$, then $\pi_1(M)$ is $\mathcal H$--accessible. 

\end{prop}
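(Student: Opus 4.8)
For the final statement of Proposition~\ref{prop:highergenus}, the plan is to show that under these hypotheses $\pi_1(M)$ is a word-hyperbolic group; the $\mathcal H$--accessibility then follows exactly as for closed hyperbolic $3$--manifold groups, via the observation that the Cayley graph of a hyperbolic group is the largest element of its poset of hyperbolic structures.

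First I would record the relevant topological facts. Since $M$ is irreducible with nonempty incompressible boundary, it is Haken. A trivial torus decomposition means $M$ is atoroidal, so $\pi_1(M)$ contains no $\mathbb{Z}^2$ arising from an essential torus, and since every boundary component has genus at least $2$ there are no peripheral $\mathbb{Z}^2$ subgroups either; in particular $M$ has no torus boundary components. Moreover $M$ is not Seifert fibered, since a Seifert fibered $3$--manifold with nonempty boundary has only torus and Klein bottle boundary components. I would then invoke Thurston's hyperbolization theorem for Haken manifolds: either the interior of $M$ admits a complete hyperbolic structure realizing $\pi_1(M)$ as a geometrically finite Kleinian group---which, as $M$ has no torus boundary, has no parabolics and is therefore convex cocompact, hence word-hyperbolic---or $M$ is one of the non-hyperbolizable exceptional Haken manifolds. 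With incompressible boundary the exceptional manifolds are interval bundles over a surface $\Sigma$, and since $\partial M$ has a component of genus at least $2$ we have $\chi(\Sigma)<0$, so $\pi_1(M)\cong\pi_1(\Sigma)$ is free or a surface group, again word-hyperbolic. In every case $\pi_1(M)$ is word-hyperbolic.

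To conclude I would spell out why a word-hyperbolic group $G$ is $\mathcal H$--accessible. Fix a finite generating set $S$: then $\Cay(G,S)$ is hyperbolic and $G$ acts coboundedly on it, so $[G\curvearrowright\Cay(G,S)]\in\mathcal H(G)$. Given any cobounded action $G\curvearrowright X$ on a hyperbolic space and a basepoint $x_0\in X$, the orbit map $g\mapsto g\cdot x_0$ extends to a $G$--equivariant map $\Cay(G,S)\to X$ which is coarsely Lipschitz, since the displacement of $x_0$ by each generator is uniformly bounded; hence $[G\curvearrowright X]\preccurlyeq[G\curvearrowright\Cay(G,S)]$. Thus $[G\curvearrowright\Cay(G,S)]$ is the largest element of $\mathcal H(G)$ (compare the treatment of closed hyperbolic $3$--manifold groups above and \cite{ABO19}), and $G$ is $\mathcal H$--accessible. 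Applying this with $G=\pi_1(M)$ finishes the proof.

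The one step that genuinely requires care is the appeal to geometrization: one must be sure the atoroidal and genus-$\ge 2$ boundary hypotheses really do force word-hyperbolicity---ruling out Seifert fibered pieces, every $\mathbb{Z}^2$ subgroup (including ones that might arise from gluing along essential annuli), and reducing the non-hyperbolizable cases to interval bundles whose fundamental groups are hyperbolic surface or free groups. Everything afterward is the routine fact that the Cayley graph dominates every cobounded hyperbolic action.
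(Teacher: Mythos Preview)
Your argument for the final statement is correct but takes a different route from the paper. You apply Thurston's hyperbolization directly to $M$: trivial torus decomposition together with a genus~$\ge 2$ boundary component forces $M$ to be atoroidal and non--Seifert-fibered, so $\pi_1(M)$ is realized as a geometrically finite Kleinian group with no parabolics (there being no torus boundary), hence convex cocompact and word-hyperbolic. The paper instead caps off the higher-genus boundary of $M$ with compact hyperbolic pieces having totally geodesic boundary (following \cite{Sun20}) to produce a \emph{closed} hyperbolic manifold $N$, and then invokes the Subgroup Tameness Theorem \cite{Agol04, CG06} and Canary's Covering Theorem \cite{Can96} to conclude that $\pi_1(M)\le\pi_1(N)$ is either a virtual surface fiber subgroup or geometrically finite in $\pi_1(N)$; in either case $\pi_1(M)$ is undistorted in the hyperbolic group $\pi_1(N)$ and hence word-hyperbolic. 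Your approach is more direct and avoids the tameness machinery; the paper's approach sidesteps having to state precisely which version of Thurston's theorem guarantees a geometrically finite (rather than merely complete) structure when the boundary has higher genus. Two minor points: your ``hyperbolizable versus $I$-bundle'' dichotomy is unnecessary, since $I$-bundles over hyperbolic surfaces are themselves hyperbolizable (as Fuchsian quotients of $\mathbb H^3$), so Thurston's theorem already covers them; and essential annuli carry $\mathbb Z$ subgroups rather than $\mathbb Z^2$, so that parenthetical worry is a red herring. Finally, note that your proposal treats only the $\mathcal H$--accessible clause; cases (\ref{item:higherboundary1}) and (\ref{item:higherboundary2}) of the proposition still require separate arguments.
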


\begin{proof}
 As in 
  \cite[Section~6.3]{Sun20}, we can paste compact hyperbolic 3-manifolds with totally geodesic boundaries to the higher genus boundary components of $M$ to obtain a finite volume hyperbolic manifold $N$ (in case $M$ has trivial torus decomposition) or a mixed 3-manifold (in case $M$ has non-trivial torus decomposition).

    If (\ref{item:higherboundary1}) holds, then the manifold $N$ has toroidal boundary, and, by assumption, there is a boundary torus $T$ for $N$ which is also a boundary torus of $M$.

The subgroup $P\vcentcolon= \pi_1(T)\simeq \mathbb Z^2$ satisfies Lemma~\ref{lem:key} and is a peripheral subgroup in the relatively hyperbolic structure on $\pi_1(N)$.  The proof of Lemma~\ref{lem:RHG} shows that there are commuting elements $a,b\in P$ and hyperbolic actions $\pi_1(N) \curvearrowright Z_X$ and $\pi_1(N) \curvearrowright Z_Y$ such that $a$ and $b$  act loxodromically and elliptically, respectively, in the action $\pi_1(N) \curvearrowright Z_X$, and $b$ acts loxodromically  in the action $G \curvearrowright Z_Y$.  As $\pi_1(M)$ is a subgroup of $\pi_1(N)$, we obtain induced actions $\pi_1(M) \curvearrowright Z_X$ and $\pi_1(M) \curvearrowright Z_Y$. Since $a, b \in \pi_1(M)$, we see that $\pi_1(M)$ is $\mathcal{H}$--inaccessible by Lemma~\ref{lem:key}. 

 If (\ref{item:higherboundary2}) holds, then $N$ has either empty or toroidal boundary and has the following properties:
\begin{enumerate}[(i)]
    \item $M$ is a submanifold of $N$ with incompressible toroidal boundary;
    \item cutting $N$ along the tori in the torus decomposition of $M$ yields the torus decomposition of $N$; and
    \item each piece of $M$ with a boundary component of genus at least $2$ is contained in a hyperbolic piece of $N$.
\end{enumerate}
In particular, it follows from (ii) and (iii) that $N$ is a mixed 3-manifold, and hence  $\pi_1(N)$ is $\mathcal{H}$--inaccessible by Theorem~\ref{thm:NPCmld}. 

In the proof of Theorem~\ref{thm:NPCmld}, we prove the $\mc H$--inaccessibility of $\pi_1(N)$  by showing there are two commuting elements $a,b\in \pi_1(T)$ for some  torus $T$ in the torus decomposition of $N$ and  isometric actions  $\pi_1(N) \curvearrowright Z_X$ and $\pi_1(N) \curvearrowright Z_Y$ on  hyperbolic spaces, and then applying Lemma \ref{lem:key}.

By (ii),  $T$ is also a torus in the  torus decomposition of $M$. Thus the induced actions  $\pi_1(M) \curvearrowright Z_X$ and $\pi_1(M) \curvearrowright Z_Y$ satisfy the hypotheses of Lemma~\ref{lem:key}, and so $\pi_1(M)$ is $\mathcal{H}$--inaccessible.

We now turn our attention to the final statement of the theorem. 
 In this case, the manifold $N$ is closed. A finitely generated subgroup $H$ of $N$ is a {\it virtual surface fiber subgroup} if $N$ admits a finite cover $N' \to N$ such that $H$ is a subgroup of $\pi_1(N')$ and $H$ is a surface fiber subgroup of $\pi_1(N')$.
Any finitely generated subgroup $H$ of $\pi_1(N)$ is either a geometrically finite Kleinian group or
a virtual surface fiber subgroup in $\pi_1(N)$ by the Covering Theorem (see \cite{Can96}) and the Subgroup Tameness Theorem (see \cite{Agol04, CG06} or   \cite[Theorem~4.1.2]{AFW15} for a statement).  In particular, $\pi_1(M)$ is either a virtual surface fiber subgroup, in which case it is hyperbolic, or 
it is geometrically finite in $\pi_1(N)$.  In the latter case, $\pi_1(M)$ is undistorted in $\pi_1(N)$ \cite[Corollary~1.6]{Hru10}, and we again conclude that $\pi_1(M)$ is hyperbolic, since undistorted subgroups of hyperbolic groups are hyperbolic. As a result, in either case, $\pi_1(M)$ is $\mathcal{H}$--accessible.  
\end{proof}


\end{document}